\newtheorem{theorem}{Theorem}[section]
\newtheorem{corollary}{Corollary}[theorem]
\newtheorem{lemma}[theorem]{Lemma}
\newtheorem{assumption}{Assumption}
\newtheorem{remark}{Remark}
\newtheorem{myalgorithm}{Algorithm}
\newcommand\norm[1]{\left\lVert#1\right\rVert}
\newcommand{\R}{\mathbb{R}}
\newcommand{\MO}{\mathcal{M}(\bar{\Omega})}
\newcommand{\CO}{C(\bar{\Omega})}
\newcommand\Lp[1]{L^{#1}(\Omega)}
\newcommand{\Y}{H_0^1(\Omega)\cap \CO}
\newcommand{\alk}{(P_{\alpha,\rho,\mu}^k)}
\newcommand{\Uad}{U_{\mathrm{ad}}}
\newcommand{\Yad}{Y_{\mathrm{ad}}}
\newcommand{\Fad}{F_{\mathrm{ad}}}
\newcommand{\eps}{\varepsilon}
\newcommand{\by}{\bar{y}}
\newcommand{\bu}{\bar{u}}
\newcommand{\bp}{\bar{p}}
\newcommand{\bmu}{\bar{\mu}}
\newcommand{\ya}{{y}^{\alpha}}
\newcommand{\ua}{{u}^{\alpha}}
\newcommand{\pa}{{p}^{\alpha}}
\newcommand{\mua}{{\mu}^{\alpha}}
\newcommand{\lama}{{\lambda}^{\alpha}}
\newcommand{\yak}{{y}^{\alpha_k}}
\newcommand{\uak}{{u}^{\alpha_k}}
\newcommand{\pak}{{p}^{\alpha_k}}
\newcommand{\muak}{{\mu}^{\alpha_k}}
\newcommand{\yan}{y^{\alpha_n}}
\newcommand{\uan}{{u}^{\alpha_n}}
\newcommand{\muan}{{\mu}^{\alpha_n}}
\newcommand\dx{\,\mathrm{d}x}
\newcommand{\la}{\langle}
\newcommand{\ra}{\rangle}
\title{A Joint Tikhonov Regularization and Augmented Lagrange Approach for Ill-posed State Constrained Control Problems with Sparse Controls\thanks{The first author was supported by the German Research Foundation (DFG) within the priority program "Non-smooth and Complementarity-based Distributed Parameter Systems: Simulation and Hierarchical Optimization" (SPP 1962) under grant number Wa 3626/3-1 and the second author was supported under Wa 3626/1-1}}
\author{
	Veronika Karl%
	\thanks{University of Würzburg, Institute of Mathematics,
		Emil-Fischer-Str.\ 30, 97074 Würzburg, Germany; veronika.karl@mathematik.uni-wuerzburg.de}
	\and
	Frank Pörner%
	\thanks{University of Würzburg, Institute of Mathematics,
		Emil-Fischer-Str.\ 30, 97074 Würzburg, Germany; frank.poerner@mathematik.uni-wuerzburg.de}
}
\begin{document}
\maketitle
\begin{abstract}
\noindent We provide a modified augmented Lagrange method coupled with a Tikhonov regularization for solving ill-posed state-constrained elliptic optimal control problems with sparse controls. We consider a linear quadratic optimal control problem without any additional $L^2$ regularization terms. The sparsity is guaranteed by an additional $L^1$ term. Here, the modification of the classical augmented Lagrange method guarantees us uniform boundedness of the multiplier that corresponds to the state constraints. We present a coupling between the regularization parameter introduced by the Tikhonov regularization and the penalty parameter from the augmented Lagrange method, which allows us to prove strong convergence of the controls and their corresponding states. Moreover convergence results proving the weak convergence of the adjoint state and weak*-convergence of the multiplier are provided. Finally, we demonstrate our method in several numerical examples.
\end{abstract}

{\small
\noindent{\bf Keywords: } ill-posed optimal control, state constraints, augmented Lagrange method, Tikhonov regularization.
\\

\noindent{\bf AMS subject classification: }
49M20, 
65K10, 
90C30. 
\\
}

\section{Introduction}

In this paper we consider a convex optimal control problem of the following form
\begin{gather}
\min\ J(y,u):=\frac{1}{2}||y-y_d||_{L^2(\Omega)}^2 + \beta \|u\|_{L^1(\Omega)}
\label{eq:optcontprob}\tag{$P$}\\
\intertext{subject to}
\begin{alignedat}{2}
Ay &=u &\quad& \text{in }\Omega,\\
y&= 0 && \text{on }\partial\Omega,\\
        y &\leq \psi &&\text{ in } \Omega,\\
        u_a\leq u &\leq u_b && \text{ in } \Omega.
\end{alignedat}
\notag
\end{gather}
We set $j(u) := \|u\|_{L^1(\Omega)}$ for abbreviation. Here $A$ is a linear elliptic operator and $\beta \geq 0$. The main difficulties in this problem are the pointwise state constraints $y(x) \leq \psi(x)$ and the convex but non-differentiable term $\|u\|_{L^1(\Omega)}$. Note that there is no additional $L^2$ regularization term present in \eqref{eq:optcontprob} which makes the problem ill-posed and numerically challenging. To the best of our knowledge, there exists no solution method for this kind of problems in literature.\\

\noindent The motivation for the $L^1$-term in the cost functional is the following. The optimal solution $\bar u$ of \eqref{eq:optcontprob} is sparse, i.e., the control is zero on large parts of the domain if $\beta$ is large enough. This can be used in the optimal placement of controllers, especially in situations where it is not desirable to control the system from the whole domain $\Omega$, see \cite{Stadler2009}. Such sparsity promoting optimal control problems without state constraints have been studied in, e.g. \cite{wachsmuth2013necessary,wachsmuth2011c,wachsmuth2011b} for optimal control of linear partial differential equations and in \cite{casas2012,CasasHerzogWachsmuth2012} for the optimal control of semilinear equations. For sufficient second-order conditions for the state constrained sparsity promoting optimal control problem with a semilinear partial differential equation we refer to \cite{CasasTroeltzsch2014}.\\

\noindent In order to deal with the state constraints we apply an augmented Lagrange method established by the first author in \cite{KarlWachsmuth2017_PREPRINT}. There the optimal control problem
\begin{equation}\label{eq:intro_reg}
\text{Minimize } \frac{1}{2}||y-y_d||_{L^2(\Omega)}^2 + \frac{\alpha}{2}\|u\|_{L^2(\Omega)}^2
\end{equation}
with $\alpha > 0$ subject to an elliptic linear partial differential equation, state constraints and bilateral control constraints had been considered. Under suitable regularity assumptions the existence of Lagrange multipliers can be proven. However in many cases the multiplier $\bar \mu$ has a very low regularity, e.g. $\bar \mu \in C(\Omega)^\ast = \MO$, where $\MO$ denotes the space of regular Borel measures on $\bar \Omega$. This makes the numerical solution of \eqref{eq:optcontprob} very challenging. Although augmented Lagrange method for inequality constraints are well known in finite dimensional spaces, only a few publications considering state constraints in infinite dimensional spaces are available:  In \cite{bergounioux93,bergounioux1993onboundarystate} the state equation is augmented, and in \cite{Ito1990} they deal with finitely many state constraints.\\

\noindent Apart from the augmented Lagrange method there exist some other different approaches to deal with state constraints. We want to mention \cite{lorenz_roesch_2010}, in which a simultaneous Tikhonov and Lavrentiev regularization had been applied for \eqref{eq:optcontprob}. There the motivation was to derive error estimates under a source condition and the assumption that the state constraints are not active for solutions of  \eqref{eq:optcontprob}. Furthermore they assumed that for the lower bound on the control it holds $u_a = 0$. In this paper we do not assume any of the above, which allows us to apply our method to a bigger class of problems.\\

\noindent Our aim is to modify and extend the method presented in \cite{KarlWachsmuth2017_PREPRINT} to obtain a numerical scheme to solve \eqref{eq:optcontprob}. The main idea is the following. We add a Tikhonov regularization term $\frac{\alpha}{2}\|u\|_{L^2(\Omega)}^2$ to \eqref{eq:optcontprob} and apply the augmented Lagrange method. Thus, in every iteration we examine the optimal control problem 
\begin{equation}\label{eq:intro_reg_aug}
\text{Minimize } \frac{1}{2}||y-y_d||_{L^2(\Omega)}^2 +\beta \|u\|_{L^1(\Omega)}+ \frac{\alpha}{2}\|u\|_{L^2(\Omega)}^2+\frac{1}{2\rho}\int_{\Omega}\left(\left(\mu+\rho(y-\psi)\right)_+\right)^2 -\mu^2\ \dx
\end{equation}
subject to an elliptic partial differential equation and bilateral control constraints. Here, again $\alpha>0$ denotes the regularization parameter of the Tikhonov term, while $\rho$ is the penalization parameter of the augmented state constraints. Both variables are coupled in our method. During the algorithm we decrease the regularization parameter $\alpha \to 0$ while increasing the penalization parameter $\rho$. The coupling is described in detail in section \ref{sec:alg_detail}. Since the decrease of $\alpha$ is a classical Tikhonov regularization approach, we aim to achieve strong convergence against the solution of \eqref{eq:optcontprob}.\\

\noindent Denote $\bar u$ the solution of \eqref{eq:optcontprob}, $u^\alpha$ the solution of \eqref{eq:intro_reg} and $u^{\alpha,\rho}$ the solution of \eqref{eq:intro_reg_aug}. Similar to \cite{lorenz_roesch_2010} we split the error into the \textit{Tikhonov error} and the \textit{Lagrange error} in order to show convergence of the algorithm
\[
\|\bar u - u^{\alpha,\rho}\|_{L^2(\Omega)} \leq \underbrace{\|\bar u - u^\alpha\|_{L^2(\Omega)}}_{\text{Tikhonov error}} + \underbrace{\|u^\alpha - u^{\alpha,\rho}\|_{L^2(\Omega)}}_{\text{Lagrange error}}.
\]


\noindent The paper is structured as follows. First, in section \ref{sec:prelim} we recall some preliminary results, then we analyze the Tikhonov regularization in section \ref{sec:regprob}. The augmented Lagrange method will be introduced in section \ref{sec:augmethod}. Similar to \cite{KarlWachsmuth2017_PREPRINT} we only update the multiplier if a certain measure of feasibility and violation of complementarity shows sufficient decrease. In section \ref{sec:convres} we establish convergence of our method, which is the main result of this paper. The convergence is mainly based on an analysis of the Lagrange error. The implementation of our algorithm is described in section \ref{sec:alg_detail} and numerical results are be presented in section \ref{sec:numerics}.

\paragraph{Notation.}
Throughout the article we will use the following notation. The inner product in $L^2(\Omega)$ is denoted by $(\cdot,\cdot)$.
Duality pairings will be denoted by $\langle\cdot,\cdot\rangle$. The dual of $C(\bar{\Omega})$ is denoted by $\mathcal{M}(\bar{\Omega})$,
which is the space of regular Borel measures on $\bar{\Omega}$. Furthermore $c$ is a generic constant which may change from line to line, but is independent from the important variables, e.g. $k$.

\section{Preliminary Results}\label{sec:prelim}

\subsection{Problem Setting}

Let $\Omega\subset \mathbb{R}^N$, $N=\lbrace 1,2,3\rbrace$ be a bounded domain with $C^{0,1}$-boundary $\Gamma$.
Let $Y$ denote the space $Y:= \Y$ and $U := L^2(\Omega)$. We want to solve the following state constrained optimal control problem:
Minimize
\[
J(y,u):=\frac{1}{2}||y-y_d||_{L^2(\Omega)}^2 + \beta \|u\|_{L^1(\Omega)}
\]
over all $(y,u)\in Y\times U$
subject to the elliptic equation
\begin{alignat*}{2}
        (Ay)(x) &= u(x) &\quad& \text{in } \Omega, \\
        y(x) &=0 && \text{on } \Gamma,
\end{alignat*}
and subject to the pointwise state and control constraints
\begin{alignat*}{2}
        y(x)\leq \psi(x) &\quad& \text{in } \Omega,\\
        u_a(x)\leq u(x)\leq u_b(x)&&\text{a.e. in } \Omega.
\end{alignat*}

\noindent In the sequel, we will work with the following set of standing assumptions.
\begin{assumption}\label{ass:standing}
 \begin{enumerate}
  \item The given data satisfy $y_d\in L^2(\Omega)$, $u_a,u_b\in L^\infty(\Omega)$ with $u_a\leq 0 \leq u_b$ and $\psi\in C(\bar \Omega)$.
  \item The differential operator $A$ is given by
  \[
   (Ay)(x) := -\sum_{i,j=1}^N \partial_{x_j}(a_{ij}(x)\partial_{x_i} y(x))
  \]
  with $a_{i,j} \in C^{0,1}(\bar\Omega)$.
  The operator $A$ is assumed to be strongly elliptic, i.e., there is $\delta>0$ such that
  \[
   \sum_{i,j=1}^N a_{ij}(x)\xi_i\xi_j\ge \delta |\xi|^2 \quad \forall \xi\in \R^N, \text{ a.e. on } \Omega.
  \]
 \end{enumerate}
\end{assumption}

\noindent The following theorem is taken from \cite[Theorem 2.1]{casasreyestro08}.
\begin{theorem}\label{theo:exsol-stateeq}
For every $u \in L^2(\Omega)$ there exists a unique weak solution $y \in H_0^1(\Omega) \cap C(\bar \Omega)$ of the state equation and it holds 
\begin{align*}
\norm{y}_{H_0^1(\Omega)} + \norm{y}_{\CO} \leq c \norm{u}_{L^2(\Omega)}.
\end{align*}
with a constant $c>0$ independent of $u$.
\end{theorem}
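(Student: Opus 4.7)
The theorem bundles together three statements: existence/uniqueness in $H_0^1(\Omega)$, continuity up to the boundary, and a combined a priori estimate. My plan is to treat the three pieces in order, since each rests on a different tool.

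First I would handle existence and uniqueness in $H_0^1(\Omega)$ by the Lax--Milgram lemma. The natural bilinear form is
\[
a(y,v) := \sum_{i,j=1}^N \int_\Omega a_{ij}(x)\,\partial_{x_i} y(x)\,\partial_{x_j} v(x)\dx,
\]
which is bounded on $H_0^1(\Omega)\times H_0^1(\Omega)$ because $a_{ij}\in C^{0,1}(\bar\Omega)\subset L^\infty(\Omega)$, and coercive because of the strong ellipticity assumption combined with Poincaré's inequality. The functional $v\mapsto (u,v)$ is continuous on $H_0^1(\Omega)$ since $L^2(\Omega)\hookrightarrow H^{-1}(\Omega)$. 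Lax--Milgram then yields a unique $y\in H_0^1(\Omega)$ satisfying $a(y,v)=(u,v)$ for all $v\in H_0^1(\Omega)$, together with the estimate $\|y\|_{H_0^1(\Omega)}\le c\,\|u\|_{L^2(\Omega)}$.

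Next I would upgrade this $H_0^1$-solution to $L^\infty(\Omega)$ via Stampacchia's truncation method. Because $N\in\{1,2,3\}$, we have $N/2\le 3/2<2$, so $u\in L^2(\Omega)$ sits strictly above the Stampacchia exponent $N/2$. Testing the weak formulation with truncations $(y-k)_+$ and iterating over the resulting level sets produces the bound $\|y\|_{L^\infty(\Omega)}\le c\,\|u\|_{L^2(\Omega)}$, as in Stampacchia's classical argument (see e.g.\ Grisvard or Kinderlehrer--Stampacchia). This is the step I expect to be the most technical, since the boundary regularity of $\Omega$ (only $C^{0,1}$) prevents us from simply invoking $H^2$-regularity and a Sobolev embedding.

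Finally, to get $y\in C(\bar\Omega)$, I would combine interior Hölder continuity from De Giorgi--Nash--Moser (applicable because $a_{ij}\in L^\infty(\Omega)$ are uniformly elliptic and the right-hand side lies in $L^p(\Omega)$ with $p=2>N/2$) with a boundary Hölder regularity result for Lipschitz domains and homogeneous Dirichlet data. Together these yield $y\in C^{0,\alpha}(\bar\Omega)$ for some $\alpha\in (0,1)$, which in particular embeds into $C(\bar\Omega)$, and the resulting norm is controlled by $\|u\|_{L^2(\Omega)}$. Adding this to the $H_0^1$-estimate from the first step gives the claimed combined inequality. Since the theorem is cited verbatim from \cite{casasreyestro08}, I would simply reference that paper for the detailed verification of the continuity part rather than reproduce the full De Giorgi iteration.
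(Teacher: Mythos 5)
Your outline is mathematically sound, but it takes a different (and more self-contained) route than the paper, which offers no argument at all: the authors simply state that the result ``is taken from [Theorem 2.1]'' of Casas, de los Reyes and Tr\"oltzsch, and move on. What you sketch --- Lax--Milgram on the bilinear form $a(y,v)=\sum_{i,j}\int_\Omega a_{ij}\,\partial_{x_i}y\,\partial_{x_j}v\dx$ for existence, uniqueness and the $H_0^1$-bound; Stampacchia truncation for the $L^\infty$-bound, correctly using that $2>N/2$ for $N\le 3$; and De Giorgi--Nash--Moser interior estimates combined with boundary H\"older regularity on a Lipschitz domain (every boundary point is regular by the exterior cone condition) for continuity up to $\bar\Omega$ --- is precisely the standard machinery behind the cited result, and your observation that one cannot shortcut via $H^2$-regularity plus Sobolev embedding on a merely $C^{0,1}$ domain is well taken. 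The trade-off is the usual one: the paper's citation is economical and appropriate for a result that is classical in the state-constrained control literature, while your sketch makes transparent exactly which structural assumptions (uniform ellipticity, $a_{ij}\in L^\infty$, $N\le 3$ so that $L^2(\Omega)\subset L^p(\Omega)$ with $p>N/2$, Lipschitz boundary) each conclusion rests on. Since you ultimately also defer the De Giorgi iteration to the reference, the two treatments end in the same place; yours just documents the path.
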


\noindent With this assumption one can prove the following properties of the control-to-state mapping $S$.

\begin{theorem}\label{thm:S_compact}
The control-to-state mapping $S: L^2(\Omega) \to H_0^1(\Omega) \cap C(\bar \Omega),u\mapsto y$ is a linear, continuous, and compact operator.
\end{theorem}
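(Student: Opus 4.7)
The plan is to verify the three properties in order of increasing difficulty. Linearity is immediate from linearity of the weak formulation of $Ay=u$: if $y_i = Su_i$ for $i=1,2$, then $\alpha y_1 + \beta y_2$ solves the state equation with right-hand side $\alpha u_1 + \beta u_2$, and uniqueness from Theorem \ref{theo:exsol-stateeq} forces $S(\alpha u_1 + \beta u_2) = \alpha Su_1 + \beta Su_2$. Continuity is simply a restatement of the a priori bound in Theorem \ref{theo:exsol-stateeq}, once $H_0^1(\Omega)\cap C(\bar\Omega)$ is endowed with its natural norm $\|\cdot\|_{H_0^1} + \|\cdot\|_{C(\bar\Omega)}$.

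The essential work is compactness. My approach is the classical sequential one: take a bounded sequence $\{u_n\}\subset L^2(\Omega)$ and exhibit a subsequence of $\{Su_n\}$ converging in $H_0^1(\Omega)\cap C(\bar\Omega)$. By weak compactness in $L^2$, pass to a (non-relabeled) subsequence with $u_n \rightharpoonup u$. By Theorem \ref{theo:exsol-stateeq}, $y_n := Su_n$ is bounded in $H_0^1(\Omega)\cap C(\bar\Omega)$, so we may extract a further subsequence with $y_n \rightharpoonup \tilde y$ in $H_0^1$. The Rellich--Kondrachov embedding gives strong convergence $y_n \to \tilde y$ in $L^2(\Omega)$, and passing to the limit in the weak formulation identifies $\tilde y = Su$.

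Strong convergence in $H_0^1$ then follows from testing the weak form of $A(y_n-y)=u_n-u$ against $y_n-y$ and invoking strong ellipticity together with Poincaré:
\[
\delta\,\|\nabla(y_n-y)\|_{L^2}^2 \le \int_\Omega (u_n-u)(y_n-y)\dx \le \|u_n-u\|_{L^2}\,\|y_n-y\|_{L^2} \to 0,
\]
because the first factor is bounded and the second tends to zero by the $L^2$ strong convergence established above.

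The main obstacle is compactness in $C(\bar\Omega)$: neither the a priori $L^\infty$ bound nor weak $H_0^1$ convergence alone produces uniform convergence. To close this gap I would invoke the elliptic regularity theory that underlies Theorem \ref{theo:exsol-stateeq} (see \cite{casasreyestro08}): for $N\le 3$ and $u\in L^2(\Omega)$, the solution $y$ belongs to $C^{0,\sigma}(\bar\Omega)$ for some $\sigma\in(0,1)$ with $\|y\|_{C^{0,\sigma}}\le c\,\|u\|_{L^2}$. Consequently $\{y_n\}$ is bounded in $C^{0,\sigma}(\bar\Omega)$, and Arzelà--Ascoli yields a subsequence converging uniformly on $\bar\Omega$; the limit must coincide with $Su$ by uniqueness of the $L^2$-limit. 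Combining the three modes of convergence delivers $y_n \to Su$ strongly in $H_0^1(\Omega)\cap C(\bar\Omega)$, which is compactness.
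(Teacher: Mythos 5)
Your proof is correct, but it is a genuinely self-contained argument where the paper gives none: the paper's entire proof consists of observing linearity from the definition and citing \cite[Theorem 2.1]{casasreyestro08} for compactness. What you supply is essentially the content hidden behind that citation, decomposed into three convergence mechanisms: Rellich--Kondrachov for the $L^2$ limit, an energy estimate (testing $A(y_n-y)=u_n-u$ with $y_n-y$ and using strong ellipticity) to upgrade to strong $H_0^1$ convergence, and H\"older regularity $\|y\|_{C^{0,\sigma}(\bar\Omega)}\le c\|u\|_{L^2(\Omega)}$ plus Arzel\`a--Ascoli for uniform convergence. The one place where you still lean on the literature is exactly that H\"older bound, which is the genuinely nontrivial ingredient (it holds for $N\le 3$ because $L^2(\Omega)\subset L^p(\Omega)$ is not needed --- rather $p=2>N/2$ suffices for the De Giorgi--Stampacchia theory); you correctly flag that this is what underlies Theorem \ref{theo:exsol-stateeq}. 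Your energy-estimate step is also sound: $\|u_n-u\|_{L^2}$ is bounded and $\|y_n-y\|_{L^2}\to 0$, so the product tends to zero. The net effect is that your version is more informative than the paper's, at the cost of importing one regularity result that the paper imports wholesale anyway; a referee would accept either, but yours makes transparent why $N\le 3$ and the $C^{0,1}$ boundary are needed.
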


\begin{proof}
The linearity follows directly by the definition of $S$ and for the compactness we refer \cite[Theorem 2.1]{casasreyestro08}.
\end{proof}

\noindent In the following, we will use the feasible sets with respect to the state and control constraints denoted by
\begin{alignat*}{2}
\Uad&=\lbrace u\in L^\infty(\Omega)\ &&|\ u_a(x)\leq u(x)\leq u_b(x) \text{ a.e. in } \Omega\rbrace,\\
\Yad&=\lbrace y\in C(\bar{\Omega})\ &&|\ y(x)\leq \psi(x)\ \forall x\in \Omega\rbrace.
\end{alignat*}
The feasible set of the optimal control problem is denoted by
\[
\Fad = \lbrace (y,u)\in Y\times L^2(\Omega)\ |\ (y,u)\in \Yad\times \Uad,\ y=Su\rbrace .
\]

\noindent The assumption $u_a \leq 0 \leq u_b$ is not a restriction. Assume that $u_a > 0$ on a subset $\Omega_1 \subseteq \Omega$. Then we can decompose the $L^1$-norm for $u \in \Uad$ as $\|u\|_{L^1(\Omega)} = \|u\|_{L^1(\Omega \setminus \Omega_1)} + \int_{\Omega_1} u$. Hence, on $\Omega_1$ the $L^1$-norm is a linear functional and its treatment does not impose any further difficulties.

\begin{theorem}\label{thm:y_bounded_by_u}
Assume that the feasible set $\Fad$ is non-empty. Then, there exists a unique solution $\bar u$ with associated state $\bar y$ of \eqref{eq:optcontprob}.
\end{theorem}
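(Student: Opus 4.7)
The plan is to apply the direct method of the calculus of variations, and then conclude uniqueness from strict convexity.

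First, since $\Fad \neq \emptyset$ and $J$ is bounded below, we may pick a minimizing sequence $(y_n,u_n) \subset \Fad$ with $J(y_n,u_n) \to \inf_{\Fad} J$. Because $u_n \in \Uad$, the sequence is uniformly bounded in $L^\infty(\Omega)$ and hence in $L^2(\Omega)$, so, after extracting a subsequence, $u_n \rightharpoonup \bar u$ in $L^2(\Omega)$. As $\Uad$ is convex and closed in $L^2(\Omega)$ it is weakly closed, so $\bar u \in \Uad$. By Theorem \ref{thm:S_compact} the control-to-state map $S$ is compact from $L^2(\Omega)$ into $H_0^1(\Omega) \cap C(\bar \Omega)$, so $y_n = Su_n \to \bar y := S\bar u$ strongly in $C(\bar \Omega)$. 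This strong convergence immediately implies $\bar y \le \psi$ in $\Omega$, i.e.\ $\bar y \in \Yad$, and thus $(\bar y, \bar u) \in \Fad$.

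Next I would pass to the limit in $J$. The tracking term $\tfrac{1}{2}\|y_n - y_d\|_{L^2(\Omega)}^2$ converges to $\tfrac{1}{2}\|\bar y - y_d\|_{L^2(\Omega)}^2$ by the strong convergence $y_n \to \bar y$ in $L^2(\Omega)$. The functional $u \mapsto \beta \|u\|_{L^1(\Omega)}$ is convex and continuous on $L^2(\Omega)$ (use $\|u\|_{L^1(\Omega)} \leq |\Omega|^{1/2}\|u\|_{L^2(\Omega)}$), hence weakly lower semicontinuous, so
\[
\beta \|\bar u\|_{L^1(\Omega)} \le \liminf_{n\to\infty} \beta \|u_n\|_{L^1(\Omega)}.
\]
Adding these two contributions yields $J(\bar y, \bar u) \le \liminf_{n\to\infty} J(y_n,u_n) = \inf_{\Fad} J$, so $(\bar y, \bar u)$ is a minimizer.

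For uniqueness, I would argue that $J$ is strictly convex as a function of $u$. The operator $S$ is linear, and it is injective: if $Su_1 = Su_2$ then $y_1 = y_2$ and applying $A$ gives $u_1 = u_2$. Consequently $u \mapsto \tfrac{1}{2}\|Su - y_d\|_{L^2(\Omega)}^2$ is strictly convex on $L^2(\Omega)$, while $u \mapsto \beta \|u\|_{L^1(\Omega)}$ is convex. The sum is therefore strictly convex, and a strictly convex functional cannot admit two distinct minimizers on the convex set $\Uad \cap S^{-1}(\Yad)$. The only potentially delicate step is the passage to the limit in the state constraint $y_n \le \psi$, and here the compactness of $S$ and the resulting uniform (i.e.\ $C(\bar\Omega)$) convergence makes it routine.
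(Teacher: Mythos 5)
Your proof is correct and is essentially the detailed version of the paper's argument: the paper compresses existence to ``standard arguments'' (the direct method you spell out, using weak compactness of $\Uad$, compactness of $S$, and weak lower semicontinuity of the $L^1$-term) and obtains uniqueness from convexity plus injectivity of $S$. The only cosmetic difference is that the paper first deduces uniqueness of the optimal state from strict convexity of the tracking term and then transfers it to the control via injectivity, whereas you build injectivity into the reduced functional to get strict convexity in $u$ directly; these are the same idea.
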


\begin{proof}
The existence of solutions follows by standard arguments. Due to the assumptions the operator $S$ is linear, continuous, and injective. Hence the problem \eqref{eq:optcontprob} is convex leading to a unique optimal state $\bar y$. By using the injectivity of $S$ we now obtain uniqueness of the optimal control.
\end{proof}

\subsection{Subdifferential of $j$}
In this section we want to recall some basic properties of the subdifferential of the function $j(u) = \|u\|_{L^1(\Omega)}$. Since $j$ is convex and Lipschitz, the generalized gradient (see \cite{Clarke1976}) and the subdifferential in the sense of convex analysis coincide. The subdifferential is defined by
$$\partial j(u) := \left\{ \lambda \in L^\infty(\Omega): \; \int\limits_\Omega \lambda(v-u) \dx \leq \|v\|_{L^1(\Omega)} - \|u\|_{L^1(\Omega)}, \quad \forall v \in L^1(\Omega) \right\}.$$
Since $j$ is a convex function with $\text{dom}(j) = L^1(\Omega)$ the subdifferential is always nonempty. It is easy to compute that $\lambda \in \partial j(u)$ if and only if
$$ \lambda \begin{cases} =+1 & \text{if } u(x) > 0\\=-1 & \text{if } u(x) < 0\\ \in [-1,+1] & \text{if } u(x) = 0  \end{cases}.$$
For more information we refer to the book of Bonnans and Shapiro \cite[Section 2.4.3]{bonnans2000}. We will need the subdifferential to establish derivatives of the objective functional $\frac{1}{2}\|y-y_d\|^2 + \beta \|u\|_{L^1(\Omega)}$ and to obtain optimality conditions.

\subsection{Optimality Conditions}
The existence of Lagrange multpliers cannot be guaranteed without any further regularity assumptions. Throughout this paper will assume that the following Slater condition is satisfied.

\begin{assumption}
\label{ass:slater}
We assume that there exists $\hat{u}\in \Uad$
and $\sigma>0$ such that for $\hat{y} = S\hat{u}$ it holds
\[
\hat{y}(x) \leq \psi(x)-\sigma \quad \forall x\in \Omega.
\]
\end{assumption}
\noindent The choice of Assumption \ref{ass:slater} as regularity condition is motivated as follows. The given inequality in the Slater condition coincides with $\psi-\hat{y}$ lying in the interior of the nonnegative cone of $Y$. The nonnegative cone of $Y=\Y$ equipped with its natural norm $\|\cdot\|_Y := \|\cdot\|_{H_0^1(\Omega)} + \|\cdot\|_{C(\Omega)}$ has nonempty interior - in contrast to $L^p(\Omega),\; p\in[1,\infty)$ equipped with the $L^p$-norm. This implies a possible existence of a Slater point $\hat u$ that satisfies Assumption \ref{ass:slater}. Moreover, since $S$ is linear, Assumption \ref{ass:slater} is equivalent to the linearized Slater condition, which on the other hand implies the more general Zowe-Kurcyusz regularity condition (see \cite[p.332]{troeltzsch2010}). However, since the set of feasible controls may have no interior points (for an example see \cite{troeltzsch2010}), the Zowe-Kurcyusz regularity condition does not imply the linearized Slater condition. Furthermore, one already has to know the solution of the optimal control problem \eqref{eq:optcontprob} to check whether the Zowe-Kurcyusz condition is satisfied. This is not the case for the proposed Slater condition.
\begin{theorem}
\label{theo:ex-adjoint-multiplier}
Let $(\bu,\by)$ be a solution of the problem \eqref{eq:optcontprob}. Furthermore, let Assumption \ref{ass:slater} be fulfilled. Then, there exists an adjoint state $\bp\in W_0^{1,s}(\Omega)$, $s\in[1,N/(N-1))$, a Lagrange multiplier $\bar{\mu}\in \MO$ and a subdifferential $\bar \lambda \in \partial j(\bu)$ such that the following optimality system
\begin{subequations}
\begin{equation}
\left\{\begin{alignedat}{2}
A\by&=\bu  &\quad &\text{in } \Omega,\\
\by&=0 &&\text{on } \Gamma,
\end{alignedat} \right.
\label{eq:kkt_o:2}
\end{equation}
\begin{equation}
\left\{\begin{alignedat}{2}
A^*\bp &= \by-y_d +\bar{\mu} &\quad &\text{in } \Omega,\\
\bp&= 0 &&\text{on } \Gamma,
\end{alignedat}\right.
\label{eq:kkt_o:1}
\end{equation}
\begin{equation}
( \bp + \beta \bar \lambda ,u-\bu) \geq 0\quad\forall u\in U_{ad}, \label{eq:kkt_o:3}
\end{equation}
\begin{equation}
\langle\bar{\mu},\by-\psi\rangle_{\MO,\CO}=0,\quad  \bar{\mu}\geq 0,\label{eq:kkt_o:4}
\end{equation}
\label{eq:kkt_o}
\end{subequations}
is fulfilled. Here, the inequality $\bar{\mu}\geq 0$ means $\langle \bar{\mu},\varphi\rangle_{\MO,\CO}\geq 0$ for all $\varphi\in \CO$ with $\varphi\ge0$.
\end{theorem}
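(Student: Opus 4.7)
The plan is to apply a generalized Karush--Kuhn--Tucker theorem for convex cone-constrained optimization. I would first reformulate \eqref{eq:optcontprob} as a control-only problem by eliminating the state via $y = Su$: minimize the convex functional
\[
F(u) := \tfrac{1}{2}\|Su - y_d\|_{L^2(\Omega)}^2 + \beta\, j(u)
\]
over $u \in \Uad$ subject to the conic constraint $Su - \psi \in K$, where $K$ is the nonpositive cone in $\CO$. Assumption \ref{ass:slater} then supplies a strict interior point, since $\psi - S\hat{u} \geq \sigma > 0$ in $\CO$ and the nonnegative cone of $\CO$ has nonempty interior, giving the Robinson-type regularity needed to guarantee existence of a Lagrange multiplier in the topological dual $\MO$.

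I would then invoke the convex KKT rule to obtain $\bmu \in \MO$ with $\bmu \geq 0$, complementarity $\langle \bmu, \by - \psi\rangle_{\MO,\CO} = 0$, and the inclusion $0 \in \partial F(\bu) + S^*\bmu + N_{\Uad}(\bu)$. Using the Moreau--Rockafellar sum rule, applicable because the quadratic tracking term is continuous everywhere and $j$ has full domain, the subdifferential decomposes as $\partial F(\bu) = S^*(S\bu - y_d) + \beta\, \partial j(\bu)$. This yields some $\bar\lambda \in \partial j(\bu)$ such that
\[
(S^*(\by - y_d) + S^*\bmu + \beta\bar\lambda,\, u - \bu) \geq 0 \quad \forall u \in \Uad.
\]

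Next I would introduce $\bp$ as the very weak (transposition) solution of $A^*\bp = \by - y_d + \bmu$ with zero Dirichlet data. Since the right-hand side is a Radon measure, classical regularity results for elliptic equations with measure data (cf.\ the framework of \cite{casasreyestro08}) give $\bp \in W_0^{1,s}(\Omega)$ for every $s \in [1, N/(N-1))$, and by construction $(\bp, u)_{L^2(\Omega)} = \langle \by - y_d + \bmu,\, Su\rangle_{\MO,\CO}$ for all $u \in L^2(\Omega)$, i.e., $S^*(\by - y_d + \bmu) = \bp$ in $L^2(\Omega)$. Substituting this identity into the variational inequality delivers \eqref{eq:kkt_o:3}, while \eqref{eq:kkt_o:2} is just $\by = S\bu$ and \eqref{eq:kkt_o:4} has already been obtained from the KKT rule.

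The main obstacle is verifying that the Slater condition really furnishes a valid constraint qualification in this infinite-dimensional conic setting. This is precisely why the state constraint is posed with image space $\CO$: its nonnegative cone has nonempty interior, unlike the corresponding cone in $L^p(\Omega)$ for $p \in [1,\infty)$, and this is what forces the multiplier to live in the measure space $\MO$. A secondary technicality is giving a rigorous meaning to $S^*$ acting on measures and to the adjoint equation with a measure right-hand side; both are handled by transposition arguments combined with the $W_0^{1,s}$-theory for elliptic equations with measure data.
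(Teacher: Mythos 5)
Your argument is sound, but it is worth noting that the paper does not actually carry out a proof here: it simply cites \cite[Theorem 2.5]{CasasTroeltzsch2014}, which establishes the optimality system in the more general semilinear setting. Your route is a self-contained convex-analysis derivation that exploits the linearity of $S$: reduce to the control, verify that Assumption \ref{ass:slater} places $S\hat u-\psi$ in the interior of the nonpositive cone of $\CO$ (which is exactly why the multiplier lands in $\MO$), apply the convex KKT rule with the Moreau--Rockafellar sum rule to split off $\bar\lambda\in\partial j(\bu)$, and then identify $S^*(\by-y_d+\bmu)$ with the transposition solution $\bp\in W_0^{1,s}(\Omega)$ of the adjoint equation. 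This is a legitimate and arguably more transparent alternative for the linear-quadratic problem at hand, since no linearization of the state equation is needed; what the citation buys the authors is coverage of the nonconvex semilinear case and the $W_0^{1,s}$-regularity machinery packaged in one reference. Two small points to tighten: the relevant measure-data regularity result is the one the paper quotes from \cite[Theorem 4]{casas1986control} (its Theorem \ref{theo:exsol_adjointeq}) rather than \cite{casasreyestro08}, which concerns the state equation; and you should state explicitly that the identity $(\bp,u)_{L^2(\Omega)}=\langle \by-y_d+\bmu,\,Su\rangle_{\MO,\CO}$ is the defining property of the very weak solution tested with $v=Su$, so that $S^*\bmu$ (a priori only an element of $L^2(\Omega)$ defined by duality) coincides with $\bp-S^*(\by-y_d)$ as required to pass to \eqref{eq:kkt_o:3}.
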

\begin{proof}
The proof can be found in \cite[Theorem 2.5]{CasasTroeltzsch2014}.
\end{proof}

\noindent In the definition \eqref{eq:kkt_o:1} for the optimal adjoint state $\bar p$ we have to solve an elliptic equation with a measure on the right hand side. This problem is well-posed in the following sense.

\begin{theorem}\label{theo:exsol_adjointeq}
Let $\bmu\in\mathcal{M}(\bar{\Omega})$ be a regular Borel measure. Then the adjoint state equation
\begin{equation*}
\begin{alignedat}{2}
A^*{\bp}&=\by-y_d+{\bmu}&\quad& \text{in }\Omega,\\
{\bp}&=0&&\text{on }\Gamma
\end{alignedat}
\end{equation*}
has a unique very weak solution $\bp\in W_0^{1,s}(\Omega)$, $s\in[1,N/(N-1))$, and it holds
\begin{equation}
\norm{\bp}_{W_0^{1,s}(\Omega)}\leq c\left( \norm{\by}_{L^2(\Omega)}+\norm{y_d}_{L^2(\Omega)}+\norm{\bmu}_{\MO}\right).
\end{equation}
\end{theorem}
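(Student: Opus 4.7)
The plan is to apply the classical Stampacchia transposition (duality) method for elliptic equations with measure data, as systematically developed in \cite{casasreyestro08} for the present divergence-form operator. As a preliminary simplification I would absorb the $L^2$-part of the right-hand side into the measure: since $\Omega$ is bounded, the embedding $L^2(\Omega)\hookrightarrow\MO$ via $g\mapsto g\dx$ is continuous with $\|g\dx\|_\MO\leq c\|g\|_{L^2(\Omega)}$, so $f:=\by-y_d+\bmu$ satisfies $\|f\|_\MO\leq c(\|\by\|_{L^2(\Omega)}+\|y_d\|_{L^2(\Omega)}+\|\bmu\|_\MO)$. It thus suffices to construct $\bp\in W_0^{1,s}(\Omega)$ solving $A^*\bp=f$ with $\|\bp\|_{W_0^{1,s}(\Omega)}\leq c\|f\|_\MO$.

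Existence and uniqueness of a very weak solution I would settle by transposition against $L^2$ data: for each $g\in L^2(\Omega)$, Theorem \ref{theo:exsol-stateeq} applied to the principal part $A$ supplies a unique $\varphi_g\in Y$ with $A\varphi_g=g$ and $\|\varphi_g\|_Y\leq c\|g\|_{L^2(\Omega)}$. The functional $L(g):=(\by-y_d,\varphi_g)_{L^2}+\langle\bmu,\varphi_g\rangle_{\MO,\CO}$ is therefore bounded on $L^2(\Omega)$ with norm $\leq c\|f\|_\MO$, and Riesz representation produces a unique $\bp\in L^2(\Omega)$ such that $(\bp,g)_{L^2}=L(g)$ for all $g$. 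This $\bp$ is \emph{the} very weak solution; uniqueness in any larger function class is immediate because $(\bp_1-\bp_2,g)_{L^2}=0$ for every $g\in L^2(\Omega)$ forces $\bp_1=\bp_2$.

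To upgrade from $L^2$ to $W_0^{1,s}$ I would rerun the transposition argument, this time against divergence-type test fields. Fix $s\in[1,N/(N-1))$, note that $s'>N$, and for $v\in C_c^\infty(\Omega;\R^N)$ let $\phi_v$ be the weak solution of $A\phi_v=-\mathrm{div}\,v$ with homogeneous boundary data. The $L^{s'}$-theory for divergence-form operators with Lipschitz coefficients yields $\phi_v\in W_0^{1,s'}(\Omega)$ with $\|\phi_v\|_{W_0^{1,s'}(\Omega)}\leq c\|v\|_{L^{s'}(\Omega)}$, and Morrey's embedding $W_0^{1,s'}\hookrightarrow\CO$ additionally controls $\|\phi_v\|_\CO$ by the same right-hand side. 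Consequently $v\mapsto(\by-y_d,\phi_v)_{L^2}+\langle\bmu,\phi_v\rangle_{\MO,\CO}$ extends to a bounded linear functional on $L^{s'}(\Omega;\R^N)$ with norm $\leq c\|f\|_\MO$, and its Riesz representative $q\in L^s(\Omega;\R^N)$ is identified with $\nabla\bp$ by integration by parts on smooth test fields and density; together with Poincaré's inequality and the $L^2$-bound from the previous step this gives the asserted estimate.

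The main obstacle is the $L^{s'}$-regularity for $\phi_v$ at exponents $s'$ which may be arbitrarily large (corresponding to $s$ close to $1$): this lies outside the Lax--Milgram range and requires sharper $L^p$-theory for divergence-form operators with merely Lipschitz coefficients on Lipschitz domains. Rather than reprove it, I would invoke it as a black box, citing \cite{casasreyestro08} where precisely this type of estimate is established for the operator considered here, and restrict the write-up to the two transposition arguments outlined above.
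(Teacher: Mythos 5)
The paper does not prove this theorem itself but simply cites \cite[Theorem 4]{casas1986control}, whose proof is exactly the Stampacchia transposition argument you outline (define $\bp$ by duality against $A\varphi_g=g$, then recover $\nabla\bp\in L^s$ by testing against $-\mathrm{div}\,v$ with $v\in L^{s'}$, $s'>N$), so your route coincides with that of the source and is sound. The one caveat is attribution of the black box: the $W_0^{1,s'}$-estimate for $A\phi_v=-\mathrm{div}\,v$ with $s'>N$ is the genuinely non-trivial ingredient and is established in \cite{casas1986control} following Stampacchia, not in \cite{casasreyestro08}, which only supplies the $H_0^1(\Omega)\cap C(\bar\Omega)$ regularity for $L^2$ data that you use in your first transposition step.
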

\begin{proof}
This result is due to \cite[Theorem 4]{casas1986control}.
\end{proof}

\noindent The next theorem shows the relation between the adjoint state and the control. One can see, that if $\beta$ is large, the control will be zero on large parts of $\Omega$. Hence $\bar u$ is sparse.
\begin{lemma}\label{lem:opt_rel_a_a0}
Let $\bar u, \bar p, \bar \lambda, \bar \mu$ satisfy the optimality system. \eqref{eq:kkt_o:2}-\eqref{eq:kkt_o:4}. Then the following relations hold for $\theta > 0$:
\begin{align*}
 \bar u(x) &\begin{cases} = u_a(x) & \text{if} \quad \bar p(x) > \beta\\
= u_b(x) & \text{if} \quad \bar p(x) < -\beta\\
= 0 & \text{if} \quad |p(x)| < \beta\\
\in [u_a(x), u_b(x)] & \text{if} \quad |p(x)| = \beta  \end{cases},\\ 
\bar \lambda(x) &= P_{[-1,+1]}\left( - \frac{1}{\beta} \bar p(x) \right),\\
\bar u(x) &= P_{[u_a(x), u_b(x)]} \big( \bar u(x) - \theta (\bar p(x) + \beta \bar \lambda(x))  \big).
\end{align*}

\noindent From the second formula it follows that $\bar \lambda$ is unique if the multiplier $\bar \mu$ and adjoint state $\bar p$ are unique.
\end{lemma}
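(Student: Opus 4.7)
The plan is to deduce all three formulas from the variational inequality \eqref{eq:kkt_o:3} together with the pointwise description of $\partial j$ recalled in Section~2.2. I would prove the projection formula for $\bar u$ first, because the other two are obtained from it by a case distinction on $\bar p$.

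First I would localize \eqref{eq:kkt_o:3}. Since $\Uad$ is of box type, a standard argument shows that $(\bar p+\beta\bar\lambda,u-\bar u)\ge0$ for all $u\in\Uad$ is equivalent to the pointwise variational inequality $(\bar p(x)+\beta\bar\lambda(x))(w-\bar u(x))\ge0$ for a.e.\ $x\in\Omega$ and all $w\in[u_a(x),u_b(x)]$, which in turn is equivalent to the fixed-point identity $\bar u(x)=P_{[u_a(x),u_b(x)]}(\bar u(x)-\theta(\bar p(x)+\beta\bar\lambda(x)))$ for every $\theta>0$. This establishes the third formula.

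Second, I would derive the case distinction for $\bar u(x)$. Fix $x$ and split according to the value of $\bar p(x)$, using the description $\bar\lambda(x)=\operatorname{sgn}\bar u(x)$ whenever $\bar u(x)\ne0$ and $\bar\lambda(x)\in[-1,+1]$ if $\bar u(x)=0$. For $\bar p(x)>\beta$: if $\bar u(x)>0$ then $\bar\lambda(x)=1$, so $\bar p+\beta\bar\lambda>0$, forcing the projection to equal $u_a(x)\le 0$, a contradiction; the case $\bar u(x)=0$ with $u_a(x)<0$ is excluded similarly, while $\bar u(x)<0$ gives $\bar\lambda=-1$ and $\bar p-\beta>0$, again forcing the projection onto $u_a(x)$. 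Hence $\bar u(x)=u_a(x)$. The case $\bar p(x)<-\beta$ is symmetric. For $|\bar p(x)|<\beta$, both signs of $\bar u(x)$ lead to a contradiction by the same argument (e.g.\ $\bar u>0$ yields $\bar p+\beta>0$ and hence the projection equals $u_a\le 0$), so $\bar u(x)=0$. The case $|\bar p(x)|=\beta$ is included in the ``otherwise'' line.

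Third, I would verify the formula $\bar\lambda(x)=P_{[-1,+1]}(-\bar p(x)/\beta)$ by checking that this explicit choice simultaneously lies in $\partial j(\bar u)(x)$ and solves the pointwise variational inequality. For $\bar p(x)>\beta$ the formula gives $\bar\lambda=-1$, which is compatible with $\bar u=u_a\le0$; for $|\bar p(x)|<\beta$ one has $\bar u=0$ and any value in $[-1,+1]$ — in particular $-\bar p/\beta$ — is admissible in $\partial j$, and the projection formula reduces to the tautology $0=P(0)$ since $\bar p+\beta\bar\lambda=0$; the case $\bar p(x)<-\beta$ is symmetric; on the null set $|\bar p(x)|=\beta$ either choice is compatible. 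Combined with the fact that $\bar p$ enters the subdifferential inclusion linearly, this gives both existence of a canonical representative $\bar\lambda$ and, since the formula is an explicit function of $\bar p$, its uniqueness once $\bar\mu$ and $\bar p$ are fixed.

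The main obstacle is the bookkeeping in degenerate configurations where $u_a(x)=0$ or $u_b(x)=0$ coincides with $\bar u(x)=0$: there the subdifferential offers a full interval of admissible $\bar\lambda$, so the projection formula $P_{[-1,+1]}(-\bar p/\beta)$ should be viewed as a consistent selection rather than a derived identity. I would treat these degenerate cases separately and note that the formula is simply one particular, canonical selection that always satisfies the KKT system.
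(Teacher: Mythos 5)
The paper does not actually prove this lemma; it simply cites \cite[Theorem 3.1]{casas2012}. Your self-contained derivation from the pointwise form of \eqref{eq:kkt_o:3} is therefore a legitimate alternative route, and your treatment of the first and third formulas is correct: the localization of the variational inequality over the box $\Uad$, its equivalence with the projection identity for every $\theta>0$, and the case distinction on $\bar p(x)$ using $\bar\lambda(x)=\operatorname{sgn}\bar u(x)$ for $\bar u(x)\neq 0$ all go through.

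The gap is in your handling of the second formula. The lemma asserts $\bar\lambda(x)=P_{[-1,+1]}(-\bar p(x)/\beta)$ for \emph{every} $\bar\lambda$ appearing in the optimality system --- this is precisely what makes the closing sentence (``$\bar\lambda$ is unique once $\bar\mu$ and $\bar p$ are'') non-vacuous. You instead only \emph{verify} that $P_{[-1,+1]}(-\bar p/\beta)$ is one admissible choice and explicitly downgrade the formula to a ``consistent selection''; an existence-of-a-selection argument cannot deliver the uniqueness conclusion. The missing step is a derivation: whenever $u_a(x)<0<u_b(x)$ and $|\bar p(x)|<\beta$, the first formula gives $\bar u(x)=0$, which is then an \emph{interior} point of $[u_a(x),u_b(x)]$, so the pointwise variational inequality forces $\bar p(x)+\beta\bar\lambda(x)=0$, i.e.\ $\bar\lambda(x)=-\bar p(x)/\beta=P_{[-1,+1]}(-\bar p(x)/\beta)$; when $\bar p(x)\ge\beta$ one has $\bar u(x)=u_a(x)<0$, hence $\bar\lambda(x)=-1$ directly from the description of $\partial j$, which matches the projection since $-\bar p(x)/\beta\le-1$, and symmetrically for $\bar p(x)\le-\beta$. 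Your observation that the identity can fail where $u_a(x)=0$ or $u_b(x)=0$ is a fair one, but the correct conclusion there is that the lemma and its uniqueness claim implicitly require $u_a<0<u_b$ a.e.\ --- as in the cited reference, where the bounds are constants with exactly this sign property --- not that the formula should be reinterpreted as a mere selection.
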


\begin{proof}
The proof only uses the optimality \eqref{eq:kkt_o:3} and can be found in \cite[Theorem 3.1]{casas2012}.
\end{proof}

\section{Convergence Analysis of the Regularized Problem }\label{sec:regprob}
Solving the problem \eqref{eq:optcontprob} directly is challenging for mainly two reasons. First, since the multiplier corresponding to the state constraints appears in form of a measure, it is not clear how to deal with the state constraints. For the control constraints many powerful methods are available. Here, we only want to mention the semi-smooth Newton solvers \cite{Hinze2005, Hinze2009b} and the Active-Set methods \cite{Bergounioux1999}. However it is not clear how to implement the state constraints into a direct solver. In \cite{Bergounioux2002,ito2003semi-smooth} Active-Set methods has been used to solve problems where the state constraints have been treated by Moreau-Yosida regularization. In \cite{ito2003semi-smooth} also relations between semi-smooth Newton methods and Active-Set methods have been established that can be used to prove fast local convergence. In this work we want to adapt the approach of a modified augmented Lagrange method that has been proposed by the first author in \cite{KarlWachsmuth2017_PREPRINT} to overcome the lack of the multiplier's regularity. \medskip\\
\noindent The second challenge is the ill-posedness of the original problem \eqref{eq:optcontprob}. There small perturbations of the given data $y_d$ may lead to large errors in the associated optimal controls. To deal with this issue we will use the well-known Tikhonov regularization technique with some positive regularization parameter $\alpha > 0$. The regularized problem is given by
\begin{alignat}{4}
\begin{split}
\text{Minimize } J_\alpha(y,u):&=\frac{1}{2}||y-y_d||_{L^2(\Omega)}^2 + \beta \|u\|_{L^1(\Omega)} +\frac{\alpha}{2}||u||_{L^2(\Omega)}^2\\
s.t. \quad Ay&=u \quad \text{in } \Omega,\\
y&=0\quad \text{on }\partial\Omega,\\
y&\leq \psi,\\
u&\in \Uad.
\end{split}
\tag{$P^{\alpha}$}
\label{eq:RegProb}
\end{alignat}
It is clear that \eqref{eq:RegProb} omits a unique solution $u^\alpha$ with associated state $y^\alpha$. One can expect that $u^\alpha$ converges to the solution of \eqref{eq:optcontprob} as $\alpha \to 0$. Similar results can be found in the literature, e.g. \cite{wachsmuth2011c}.

\begin{lemma}\label{lem:conv_alpha_tech}
Let $\ua$ be the unique solution of \eqref{eq:RegProb} with $\alpha > 0$ with associated state $y^\alpha$. Furthermore let $\bar u$ be the unique solution of \eqref{eq:optcontprob} and $\bar y$ its associated optimal state. Then we have 
\begin{align*}
\|u^\alpha - \bar u\|_{L^2(\Omega)} &\to 0,\\
\frac{1}{\alpha} \|y^\alpha - \bar y\|_{L^2(\Omega)}^2 &\to 0
\end{align*}
as $\alpha \to 0$.
\end{lemma}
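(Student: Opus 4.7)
The plan is to split the argument into two parts: first establish strong convergence $u^\alpha \to \bar u$ in $L^2(\Omega)$, and then deduce the quadratic rate $\|y^\alpha - \bar y\|_{L^2(\Omega)}^2 = o(\alpha)$ by exploiting the first-order optimality system of \eqref{eq:optcontprob}.

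For the first part I would start with the standard two-sided Tikhonov comparison. Since $(\bar y, \bar u)$ is admissible for \eqref{eq:RegProb}, optimality of $u^\alpha$ yields $J_\alpha(y^\alpha, u^\alpha) \leq J_\alpha(\bar y, \bar u)$. Since $(y^\alpha, u^\alpha)$ is admissible for \eqref{eq:optcontprob}, optimality of $\bar u$ yields $J(\bar y, \bar u) \leq J(y^\alpha, u^\alpha)$. Subtracting cancels the tracking and $L^1$ parts and leaves
\[
\tfrac{\alpha}{2}\|u^\alpha\|_{L^2(\Omega)}^2 \leq \tfrac{\alpha}{2}\|\bar u\|_{L^2(\Omega)}^2,
\]
so $\{u^\alpha\}$ is uniformly bounded in $L^2(\Omega)$. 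Extracting a weakly convergent subsequence $u^{\alpha_k} \rightharpoonup u^*$ and invoking compactness of $S$ (Theorem~\ref{thm:S_compact}), the associated states converge strongly in $C(\bar\Omega)$, so $u^* \in \Uad$ and $Su^* \leq \psi$. Weak lower semicontinuity of $\|\cdot\|_{L^1(\Omega)}$ and $\|\cdot\|_{L^2(\Omega)}^2$ then gives $J(Su^*, u^*) \leq \liminf_k J_{\alpha_k}(y^{\alpha_k}, u^{\alpha_k}) \leq J(\bar y, \bar u)$, and uniqueness from Theorem~\ref{thm:y_bounded_by_u} forces $u^* = \bar u$; a standard subsequence principle gives convergence of the whole family. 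The bound $\|u^\alpha\|_{L^2(\Omega)} \leq \|\bar u\|_{L^2(\Omega)}$ together with $u^\alpha \rightharpoonup \bar u$ produces $\|u^\alpha\|_{L^2(\Omega)} \to \|\bar u\|_{L^2(\Omega)}$, and strong convergence in the Hilbert space $L^2(\Omega)$ follows by the Radon--Riesz property.

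For the rate estimate, the key is a matching lower bound on $J(y^\alpha, u^\alpha) - J(\bar y, \bar u)$. Expanding the quadratic tracking term gives
\[
\tfrac{1}{2}\|y^\alpha - y_d\|_{L^2(\Omega)}^2 - \tfrac{1}{2}\|\bar y - y_d\|_{L^2(\Omega)}^2 = \tfrac{1}{2}\|y^\alpha - \bar y\|_{L^2(\Omega)}^2 + (\bar y - y_d, y^\alpha - \bar y).
\]
Testing the adjoint equation \eqref{eq:kkt_o:1} against $y^\alpha - \bar y$ and using $A(y^\alpha - \bar y) = u^\alpha - \bar u$ from \eqref{eq:kkt_o:2} converts the cross term into $(\bar p, u^\alpha - \bar u) - \langle \bar\mu, y^\alpha - \bar y \rangle$. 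The subgradient inequality $\|u^\alpha\|_{L^1(\Omega)} \geq \|\bar u\|_{L^1(\Omega)} + (\bar\lambda, u^\alpha - \bar u)$ handles the $L^1$ gap; the variational inequality \eqref{eq:kkt_o:3} applied to $u^\alpha \in \Uad$ eliminates $(\bar p + \beta\bar\lambda, u^\alpha - \bar u) \geq 0$; and the complementarity \eqref{eq:kkt_o:4} together with $y^\alpha \leq \psi$ and $\bar\mu \geq 0$ yields $\langle \bar\mu, y^\alpha - \bar y\rangle = \langle \bar\mu, y^\alpha - \psi\rangle \leq 0$. Assembling these bounds gives $J(y^\alpha, u^\alpha) - J(\bar y, \bar u) \geq \tfrac{1}{2}\|y^\alpha - \bar y\|_{L^2(\Omega)}^2$. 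The upper bound $J(y^\alpha, u^\alpha) - J(\bar y, \bar u) \leq \tfrac{\alpha}{2}\bigl(\|\bar u\|_{L^2(\Omega)}^2 - \|u^\alpha\|_{L^2(\Omega)}^2\bigr)$ is the same comparison used above. Dividing by $\alpha$ yields
\[
\tfrac{1}{\alpha}\|y^\alpha - \bar y\|_{L^2(\Omega)}^2 \leq \|\bar u\|_{L^2(\Omega)}^2 - \|u^\alpha\|_{L^2(\Omega)}^2,
\]
and the right-hand side vanishes as $\alpha \to 0$ by the norm convergence established in the first part.

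The hard part will be the lower bound on the objective gap: it requires carefully combining all three ingredients of the KKT system — the adjoint equation to trade $y$-differences for $u$-differences, the variational inequality with the $L^1$-subgradient to neutralize the first-order terms in $u$, and complementary slackness together with feasibility of $y^\alpha$ to control the measure pairing. The weak-to-strong upgrade and the upper bound on the objective gap are then routine Tikhonov-type steps.
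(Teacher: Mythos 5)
Your argument is correct, but the second half takes a genuinely different route from the paper. The first part (two-sided comparison of $J_\alpha(u^\alpha)\le J_\alpha(\bar u)$ and $J(\bar u)\le J(u^\alpha)$, boundedness, weak subsequential limit via compactness of $S$, identification with $\bar u$ by uniqueness, and the weak-plus-norm upgrade to strong convergence) coincides with the paper's proof. For the rate $\frac{1}{\alpha}\|y^\alpha-\bar y\|^2\to 0$, however, the paper does not touch the KKT system of \eqref{eq:optcontprob} at all: it derives a quadratic growth condition $J_0(\bar u)+c\|y-\bar y\|_{L^2(\Omega)}^2\le J_0(u)$ for all feasible $u$ from convexity of the reduced functional (the strong convexity of the tracking term with respect to the \emph{state}), and then inserts the same one-sided Tikhonov comparison. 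You instead obtain the matching lower bound $J(y^\alpha,u^\alpha)-J(\bar y,\bar u)\ge\frac12\|y^\alpha-\bar y\|^2$ by expanding the quadratic, testing the adjoint equation \eqref{eq:kkt_o:1} with $y^\alpha-\bar y$, and discarding the nonnegative terms $(\bar p+\beta\bar\lambda,u^\alpha-\bar u)\ge 0$ and $-\langle\bar\mu,y^\alpha-\psi\rangle\ge 0$; this is a valid chain (the pairing $\langle\bar\mu,y^\alpha-\bar y\rangle$ is well defined since $y^\alpha-\bar y\in C(\bar\Omega)$, and the very weak formulation of the adjoint equation justifies the integration by parts). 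The trade-off is that your route needs the existence of $(\bar p,\bar\mu,\bar\lambda)$, hence the Slater condition of Assumption \ref{ass:slater}, whereas the paper's growth-condition argument is purely variational and would survive without any constraint qualification; on the other hand, your version yields the explicit constant $\tfrac12$ rather than a generic $c$, and it avoids the slightly delicate claim of strong convexity of $J_0$ with respect to $u$ (which cannot hold with a compact $S$; only the state-based growth the paper actually uses in its final estimate is available). Both proofs close the argument the same way, by bounding the objective gap above by $\frac{\alpha}{2}(\|\bar u\|^2-\|u^\alpha\|^2)$ and invoking the norm convergence from the first part.
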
 

\begin{proof}
We first show that $\|\ua\|_{L^2(\Omega)} \leq \|\bar u\|_{L^2(\Omega)}$ for all $\alpha > 0$. Let $J_0$ denote the cost functional $J_\alpha$ for $\alpha:=0$. We start with
\[
J_0(\ua) + \frac{\alpha}{2}\|\ua\|_{L^2(\Omega)}^2 = J_\alpha(\ua) \leq J_\alpha(\bar u) = J_0(\bar u) + \frac{\alpha}{2}\|\bar u\|_{L^2(\Omega)}^2 \leq J_0(\ua) + \frac{\alpha}{2}\|\bar u\|_{L^2(\Omega)}^2,
\]
where we exploited the optimality of $\ua$ for \eqref{eq:RegProb} and the optimality of $\bu$ for \eqref{eq:optcontprob}.This yields $\|\ua\|_{L^2(\Omega)} \leq \|\bar u\|_{L^2(\Omega)}$. Now we use that the set $\Uad$ is weakly compact and extract a subsequence $u^{\alpha_i} \rightharpoonup u^\ast \in \Uad$. Since the operator $S$ is compact, see Theorem \ref{thm:S_compact}, we obtain strong convergence of the state on the subsequence $y^{\alpha_i} \to y^\ast = S u^\ast$ in $\Y$. Now let $u \in \Uad$ be arbitrary, then
\[
J_0(u^*) = \lim\limits_{i \to \infty} J_0 (u^{\alpha_i}) = \lim\limits_{i \to \infty} J_{\alpha_i}(u^{\alpha_i}) \leq \lim\limits_{i \to \infty} J_{\alpha_i}(u) = J_0(u).
\]
Hence $u^\ast$ is a minimizer of $J_0$. The solution $\bar u$ of \eqref{eq:optcontprob} is unique and since the problems \eqref{eq:optcontprob} and \eqref{eq:RegProb} coincide for $\alpha = 0$ we obtain $\bar u = u^\ast$.  As the norm is weakly lower semicontinuous we get
\[
\limsup\limits_{i \to \infty}\|u^{\alpha_i}\|\leq\|u^\ast\|_{L^2(\Omega)} \leq \liminf\limits_{i \to \infty} \|u^{\alpha_i}\|_{L^2(\Omega)} \leq \limsup\limits_{i \to \infty} \|u^{\alpha_i}\|_{L^2(\Omega)}
\]
which shows $\|u^{\alpha_i}\|_{L^2(\Omega)} \to \|u^\ast\|_{L^2(\Omega)}$. As a well known fact, weak and norm convergence yield strong convergence and hence we have $u^{\alpha_i} \to u^\ast$. 
As the sequence $u^{\alpha_i}$ was arbitrarily chosen we obtain convergence of the whole sequence $\ua \to \bar u$.

We now want to show improved convergence results for the states. Since $S$ is a linear, continuous, and injective operator we know that the functional
\[
J_0(u) = \frac{1}{2} \|Su - y_d\|^2_{L^2(\Omega)}
\]
is strongly convex. In the following let $u$ and $y:=Su$ such that $(u, y) \in \Fad$. Then the following inequality holds for all $t \in [0,1]$
\[
J_0(t \bar u + (1-t) u) \leq t J_0(\bar u) + (1-t) J_0(u) - m \cdot t(1-t) \|\bar u - u\|^2_{L^2(\Omega)}
\]
for some parameter $m> 0$. Now we set $t = \frac{1}{2}$ and use the optimality of $\bar u$ to obtain
\[
J_0(\bar u) \leq \frac{1}{2} J_0(\bar u) + \frac{1}{2} J_0(u) - \frac{m}{2} \|\bar u - u\|_{L^2(\Omega)}^2.
\]
Please note that with $(u, y), (\bar u, \bar y) \in \Fad$ the convex combination is also feasible. Here we set $\bar y := S \bar u$. Furthermore we obtain with the continuity of $S$ that $\|y - \bar y\|_{L^2(\Omega)} \leq c \|u-\bar u\|_{L^2(\Omega)}$ with some constant $c > 0$. Rearranging the inequality above yields the growth condition
\[
J_0(\bar u) + c  \|y-\bar y\|_{L^2(\Omega)}^2 \leq J_0(u).
\]
This growth condition can now be used to established improved convergence results for the states $(y^\alpha)$. Recall that $J_\alpha(u^\alpha) \leq J_\alpha(\bar u)$ and estimate
\begin{align*}
J_0(\bar u) + c \|y^\alpha - \bar y\|_{L^2(\Omega)}^2 &+ \frac{\alpha}{2} \|u^\alpha\|_{L^2(\Omega)}^2 \leq J_0(u^\alpha) + \frac{\alpha}{2} \|u^\alpha\|_{L^2(\Omega)}^2 = J_\alpha (u^\alpha)\\
& \leq J_\alpha (\bar u) = J_0(\bar u) + \frac{\alpha}{2} \|\bar u\|_{L^2(\Omega)}^2.
\end{align*}
This implies
\[
\|y^\alpha - \bar y\|_{L^2(\Omega)}^2 \leq c \cdot \alpha \left(  \|\bar u\|_{L^2(\Omega)}^2 -  \|u^\alpha\|_{L^2(\Omega)}^2    \right).
\]
Using the already established strong convergence $u^\alpha \to \bar u$, we get
\[
\lim\limits_{\alpha \to 0} \frac{1}{\alpha} \|y^\alpha - \bar y\|_{L^2(\Omega)}^2 = 0,
\]
which finishes the proof.
\end{proof}

\subsection{Optimality Conditions}

Let us assume that the Slater condition given in Assumption \ref{ass:slater} is satisfied. Then first order necessary optimality conditions can be established for the regularized problem.

\begin{theorem}
\label{theo:ex-adjoint-multiplier-regprob}
Let $(\ua,\ya)$ be the solution of the problem \eqref{eq:RegProb}. Furthermore, let Assumption \ref{ass:slater} be fulfilled. Then, there exists an adjoint state $\pa\in W^{1,s}(\Omega)$, $s\in [1,N/(N-1))$, a Lagrange multiplier $\mua\in \MO$ and a subdifferential $\lama \in \partial j(\ua)$ such that the following optimality system holds:
\begin{subequations}
\begin{equation}
\left\{\begin{alignedat}{2}
A\ya&=\ua  &\quad &\text{in } \Omega,\\
\ya&=0 &&\text{on } \Gamma,
\end{alignedat}\right.
\label{eq:kktreg_o:1}
\end{equation}
\begin{equation}
\left\{\begin{alignedat}{2}
A^*\pa &= \ya-y_d +\mua &\quad &\text{in } \Omega,\\
\pa&= 0 &&\text{on } \Gamma,
\end{alignedat}\right.
\label{eq:kktreg_o:2}
\end{equation}
\begin{equation}
( \pa + \alpha \ua + \beta \lama ,u-\ua) \geq 0\quad\forall u\in U_{ad}, \label{eq:kktreg_o:3}
\end{equation}
\begin{equation}
\langle\mua,\ya-\psi\rangle_{\MO,\CO}=0,\quad  \mua\geq 0.\label{eq:kktreg_o:4}
\end{equation}
\label{eq:kktreg_o}
\end{subequations}
\end{theorem}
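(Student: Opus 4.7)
The plan is to mirror the strategy used for Theorem \ref{theo:ex-adjoint-multiplier} (the unregularized case, proved in \cite[Theorem 2.5]{CasasTroeltzsch2014}), since adding the Tikhonov term $\frac{\alpha}{2}\|u\|_{L^2(\Omega)}^2$ does not affect the structural difficulties: it is smooth, convex, and its Fréchet derivative is simply $\alpha u$, so it can be absorbed into the variational inequality term without altering the state-constraint handling. I would therefore reduce the problem to a control-only formulation via the control-to-state map $S$ from Theorem \ref{thm:S_compact}, i.e., minimize the reduced functional
\[
F_\alpha(u) := \tfrac{1}{2}\|Su - y_d\|_{L^2(\Omega)}^2 + \beta j(u) + \tfrac{\alpha}{2}\|u\|_{L^2(\Omega)}^2
\]
over $u \in \Uad$ subject to $G(u) := Su - \psi \in K$, where $K$ is the nonnegative cone of $\CO$.

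Next I would invoke a generalized KKT theorem for convex optimization with cone constraints (e.g., Zowe--Kurcyusz, see \cite[p.~332]{troeltzsch2010}, or \cite[Section 3]{bonnans2000}). The Slater condition in Assumption~\ref{ass:slater} guarantees that the required regularity holds, hence there exists a multiplier $\mua \in \MO$ with $\mua \geq 0$ satisfying the complementarity condition \eqref{eq:kktreg_o:4}. Applying the subdifferential sum rule (valid since the smooth parts are Fréchet differentiable and $j$ is convex continuous on $L^1(\Omega)$, so qualification hypotheses are trivially met) to the Lagrangian $L(u) = F_\alpha(u) + \langle \mua, Su - \psi \rangle_{\MO,\CO}$ yields the variational inequality
\[
\bigl( S^*(\ya - y_d) + S^*\mua + \alpha \ua + \beta \lama,\; u - \ua \bigr) \geq 0 \quad \forall u \in \Uad,
\]
for some $\lama \in \partial j(\ua)$, where I have used $Su^\alpha = y^\alpha$ and the characterization of $\partial j$ recalled in Section~2.2.

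I would then introduce the adjoint state $\pa$ as the very weak solution of $A^*\pa = \ya - y_d + \mua$ with homogeneous Dirichlet data; its existence and regularity $\pa \in W_0^{1,s}(\Omega)$ with $s \in [1, N/(N-1))$ follow directly from Theorem~\ref{theo:exsol_adjointeq}. By the definition of the very weak solution one has $S^*(\ya - y_d + \mua) = \pa$ as elements of $L^2(\Omega)$ (or more precisely as distributions paired with $\Uad \subset L^\infty(\Omega)$), which converts the variational inequality above into \eqref{eq:kktreg_o:3}. The state equation \eqref{eq:kktreg_o:1} is just the definition of $\ya = S\ua$.

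The main obstacle I anticipate is justifying the application of the multiplier existence theorem together with the subdifferential sum rule in a non-smooth, measure-valued setting. Specifically, one must verify that $j$'s subdifferential can be separated from the smooth gradient contributions (straightforward, as $j$ is finite and continuous everywhere on $L^1(\Omega)$) and that the pairing $\langle \mua, \cdot \rangle_{\MO,\CO}$ interacts correctly with $S: L^2 \to \CO$ when passing to the adjoint formulation (this is handled by the very-weak-solution framework of Theorem~\ref{theo:exsol_adjointeq}). Everything else---existence, uniqueness of $\ua$, and convexity---follows from standard arguments already used in Theorem~\ref{thm:y_bounded_by_u} together with the strong convexity added by the Tikhonov term.
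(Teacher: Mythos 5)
Your proposal is correct and follows essentially the same route as the paper, which simply defers to \cite[Theorem 2.5]{CasasTroeltzsch2014}: the cited result is proved exactly by reducing to the control variable, invoking a multiplier existence theorem under the Slater condition to obtain $\mua\in\MO$ with complementarity, splitting off $\partial j$ via the convex sum rule, and identifying $S^*(\ya-y_d+\mua)$ with the very weak adjoint state. The Tikhonov term indeed only contributes the smooth monotone term $\alpha\ua$ to the variational inequality, so no new difficulty arises relative to the unregularized case.
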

\begin{proof}
The proof can be found in \cite[Theorem 2.5]{CasasTroeltzsch2014}.
\end{proof}

\noindent In the following we collect some results similar to Lemma \ref{lem:opt_rel_a_a0}.
\begin{lemma}\label{lem:opt_rel_a_not0}
Let $u^\alpha, y^\alpha, p^\alpha,\lambda^\alpha, \mu^\alpha$ satisfy the optimality system \eqref{eq:kktreg_o:1}-\eqref{eq:kktreg_o:4}. Then the following relations hold:

\begin{align*}
u^\alpha (x) &= \begin{cases} u_a(x) & \text{if}\quad  \beta - \alpha u_a(x) < p^\alpha(x)\\
\frac{1}{\alpha}( \beta - p^\alpha(x) ) & \text{if}\quad \beta \leq p^\alpha(x) \leq \beta - \alpha u_a(x)\\
0 & \text{if}\quad |p^\alpha(x)| < \beta\\
\frac{1}{\alpha}(-\beta - p^\alpha(x)) & \text{if}\quad - \alpha u_b(x) - \beta \leq p^\alpha(x) \leq - \beta\\
u_b(x) & \text{if}\quad p^\alpha(x) < - \alpha u_b(x) - \beta
\end{cases}\\
\lambda^\alpha (x) &= P_{[-1,1]}\left( - \frac{1}{\beta}(p^\alpha(x) + \alpha u^\alpha (x)) \right)\\
u^\alpha(x) &= P_{[ u_a(x), u_b(x) ]} \left( - \frac{1}{\alpha}( p^\alpha(x) + \beta \lambda^\alpha(x) )  \right)
\end{align*}
\end{lemma}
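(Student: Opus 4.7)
Since $\Uad$ is defined by pointwise bounds and admits arbitrary admissible variations, the variational inequality \eqref{eq:kktreg_o:3} localizes to the pointwise inequality
\begin{equation*}
(\pa(x) + \alpha \ua(x) + \beta \lama(x))(v - \ua(x)) \geq 0 \quad \text{for all } v \in [u_a(x), u_b(x)],
\end{equation*}
holding for a.e.\ $x \in \Omega$. This is precisely the variational characterization of the metric projection onto $[u_a(x), u_b(x)]$ applied at the point $-\frac{1}{\alpha}(\pa(x) + \beta \lama(x))$, so after dividing by $\alpha > 0$ one immediately reads off the third formula $\ua(x) = P_{[u_a(x), u_b(x)]}(-\frac{1}{\alpha}(\pa(x) + \beta \lama(x)))$.

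For the second formula I would split into cases on the position of $\ua(x)$ in $[u_a(x), u_b(x)]$. If $\ua$ lies in the interior, the pointwise inequality becomes an equality $\pa + \alpha \ua + \beta \lama = 0$, so $\lama = -\frac{1}{\beta}(\pa + \alpha \ua)$; since $\lama \in [-1,1]$ by the subdifferential characterization of $j$, this value is its own projection onto $[-1,1]$. At $\ua = u_b$ with $u_b > 0$ one has $\lama = 1$, and the resulting one-sided inequality $\pa + \alpha \ua + \beta \lama \leq 0$ forces $-\frac{1}{\beta}(\pa + \alpha \ua) \geq 1$, so again $P_{[-1,1]}(-\frac{1}{\beta}(\pa+\alpha\ua)) = 1 = \lama$. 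The case $\ua = u_a$ is symmetric.

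The piecewise formula for $\ua$ is obtained by substituting the known values of $\lama$ into the projection formula just derived. On $\{\ua > 0\}$ one has $\lama = 1$, so $\ua = P_{[u_a, u_b]}(\frac{1}{\alpha}(-\beta - \pa))$; classifying which branch of this projection is active produces the last two cases of the formula. The region $\{\ua < 0\}$ yields the first two cases by the symmetric argument with $\lama = -1$. The middle case $|\pa| < \beta \Rightarrow \ua = 0$ follows because neither sign-specific alternative is then compatible with $\ua \neq 0$.

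The main technical obstacle is the bookkeeping of the case analysis at the transition thresholds $\pa \in \{\pm \beta,\ \beta - \alpha u_a,\ -\beta - \alpha u_b\}$, together with the degenerate situations $u_a = 0$ or $u_b = 0$. One checks directly that the pieces agree at the transitions (for instance the fourth case evaluates to $0$ at $\pa = -\beta$, matching the middle case), so the formula is internally consistent; the remainder is routine algebraic manipulation.
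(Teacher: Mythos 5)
Your proposal is correct and follows the same route as the paper: the paper's proof consists of the single remark that the relations follow from a pointwise interpretation of the optimality condition \eqref{eq:kktreg_o:3}, which is exactly the localization-plus-projection argument you carry out in detail. The case analysis you sketch (reading off the third formula from the pointwise variational inequality, deducing the second formula from the one-sided conditions at the bounds, and substituting $\lambda^\alpha = \pm 1$ on the sets where $u^\alpha \neq 0$ to obtain the piecewise formula) is the intended filling-in of that one-line proof.
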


\begin{proof}
These results can be proven by using a pointwise interpretation of the optimality condition \eqref{eq:kktreg_o:3}.
\end{proof}

\noindent In the subsequent analysis we will need that the multipliers for the problem \eqref{eq:RegProb} are uniformly bounded for all $\alpha \geq 0$. Note that for $\alpha = 0$ the problem \eqref{eq:RegProb} reduces to problem \eqref{eq:optcontprob}. The boundedness of the multiplier can be expected from abstract theory \cite{bonnans2000}, and we make use of the Slater condition to prove it.

\begin{lemma}\label{lem:reg_mult_bounded}
Let $\alpha \geq 0$ and define the set
$$M^\alpha := \{ \mu^\alpha \in \MO: \; (u^\alpha, y^\alpha, p^\alpha, \lambda^\alpha,\mu^\alpha) \text{ satisfy } \eqref{eq:kktreg_o:1}-\eqref{eq:kktreg_o:4} \}.$$
of all multipliers associated with problem \eqref{eq:RegProb}. Then the multipliers are uniformly bounded, i.e. there exists a constant $C > 0$ independent from $\alpha$ such that
$$\|\mu^\alpha \|_{\MO} \leq C, \quad \forall \mu^\alpha \in M^\alpha \quad \forall \alpha \geq 0.$$
\end{lemma}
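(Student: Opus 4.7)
My plan is to exploit the Slater gap $\sigma>0$ to reduce uniform boundedness of $\|\mua\|_{\MO}$ to uniform boundedness of a single duality pairing. Since $\mua\ge 0$ is a nonnegative regular Borel measure and the continuous function $\psi-\hat y$ satisfies $\psi-\hat y\ge \sigma>0$ on $\bar\Omega$ (the inequality on $\Omega$ from Assumption~\ref{ass:slater} extends to $\bar\Omega$ by continuity of $\psi$ and $\hat y$), one has
\[
\sigma\,\|\mua\|_{\MO}=\sigma\la\mua,1\ra_{\MO,\CO}\le \la\mua,\psi-\hat y\ra_{\MO,\CO},
\]
so it suffices to bound the right-hand side uniformly in $\alpha$.

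I would then derive such a bound by chaining three ingredients. First, the complementarity relation \eqref{eq:kktreg_o:4} together with $\la\mua,\ya-\psi\ra=0$ gives $\la\mua,\psi-\hat y\ra=\la\mua,\ya-\hat y\ra$. Second, I would test the very weak formulation of the adjoint equation \eqref{eq:kktreg_o:2} with $z:=\ya-\hat y\in Y$, which is an admissible test function because $Az=\ua-\hat u\in L^2(\Omega)$; this yields the identity
\[
\la\mua,\ya-\hat y\ra=(\pa,\ua-\hat u)-(\ya-y_d,\ya-\hat y).
\]
Third, inserting $u=\hat u\in\Uad$ into the variational inequality \eqref{eq:kktreg_o:3} gives
\[
(\pa,\ua-\hat u)\le \alpha(\ua,\hat u-\ua)+\beta(\lama,\hat u-\ua).
\]
Combining these three pieces produces
\[
\sigma\,\|\mua\|_{\MO}\le (y_d-\ya,\ya-\hat y)+\alpha(\ua,\hat u-\ua)+\beta(\lama,\hat u-\ua).
\]

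It then remains to verify that the right-hand side is uniformly bounded. The controls $\ua\in\Uad$ are uniformly bounded in $L^\infty(\Omega)$; Theorem~\ref{theo:exsol-stateeq} in turn bounds $\ya=S\ua$ in $\CO$; the subgradients satisfy $\|\lama\|_{L^\infty}\le 1$ by the pointwise characterization of $\partial j$; and $\hat u$, $\hat y$, $y_d$ are fixed data. The only $\alpha$-dependent contribution is $\alpha(\ua,\hat u-\ua)$, which is $O(\alpha)$ thanks to the $L^2$-bound $\|\ua\|_{L^2}\le\|\bu\|_{L^2}$ established in the proof of Lemma~\ref{lem:conv_alpha_tech}; on the bounded algorithmic range of regularization parameters (the regime of interest, since later $\alpha\searrow 0$) this delivers the claim with a constant depending only on the data, on $\beta$, on the Slater parameters, and on an upper bound for $\alpha$. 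The main technical hurdle is justifying the very weak testing step in the presence of a measure-valued right-hand side, which however reduces to observing that $\ya-\hat y\in\CO$ so that the pairing with $\mua$ is well-defined and the identity above holds rigorously.
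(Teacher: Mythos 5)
Your proof is correct and follows the same overall strategy as the paper's: use the Slater gap $\sigma$ and the nonnegativity of $\mua$ to reduce $\sigma\|\mua\|_{\MO}$ to the pairing $\la\mua,\psi-\hat y\ra$, remove $\la\mua,\psi-\ya\ra$ by the complementarity relation \eqref{eq:kktreg_o:4}, and convert $\la\mua,\ya-\hat y\ra$ into an $L^2$ expression via the adjoint equation (your ``testing with $z=\ya-\hat y$'' is the same identity the paper obtains through $S^\ast$). The one step where you genuinely diverge is the treatment of $(\pa,\ua-\hat u)$: the paper keeps $\pa$ and asserts its uniform $L^2$-boundedness by citing Theorem \ref{theo:exsol_adjointeq}, whose a priori estimate itself involves $\|\mua\|_{\MO}$ --- the quantity being bounded --- so that step is at best incompletely justified as written. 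You instead eliminate $\pa$ by inserting $u=\hat u\in\Uad$ into the variational inequality \eqref{eq:kktreg_o:3}, leaving only data-bounded terms plus $\alpha(\ua,\hat u-\ua)$ and $\beta(\lama,\hat u-\ua)$; this is the more robust route and is precisely the mechanism the paper itself uses later, in Lemma \ref{lemma:L1boundedness}, to bound the augmented multipliers in $L^1(\Omega)$. The only cost is the residual term $\alpha(\ua,\hat u-\ua)=O(\alpha)$, so your constant depends on an upper bound for $\alpha$; you flag this honestly, and it is harmless in context (the algorithm only decreases $\alpha$ from $\alpha_1$, and the paper imposes the analogous restriction explicitly in Theorem \ref{theo:bounded-yup}), but to match the lemma's literal claim ``for all $\alpha\ge 0$'' you should state the restriction $\alpha\le\alpha_{\max}$ as a hypothesis.
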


\begin{proof}
We follow the book of Tröltzsch \cite{troeltzsch2010} and consider our solution mapping $S: L^2(\Omega) \to H_0^1(\Omega) \cap C(\bar \Omega)$. Then the dual operator is a mapping $S^\ast: \MO \to L^2(\Omega)$. Let $\alpha \geq 0$ be given, and $\ua, \ya$ be the solution of \eqref{eq:RegProb} with an associated multiplier $\mu^\alpha$.  We now use the Slater condition from Assumption \ref{ass:slater} and compute for any $f \in C(\bar \Omega)$ with $\|f\|_\infty = 1$:
\begin{align*}
\sigma \left| \int\limits_\Omega f d\mu^\alpha \right| &\leq \sigma \int\limits_\Omega |f| d\mu^\alpha \leq  \int\limits_\Omega \sigma d\mu^\alpha \leq \int\limits_\Omega (\psi - \hat y) d\mu^\alpha\\
&= \underbrace{\langle \mu^\alpha, \psi - y^\alpha \rangle_{\MO, C(\bar \Omega)}}_{=0 \text{ by } \eqref{eq:kktreg_o:4}} + \langle\mu^\alpha, y^\alpha - \hat y \rangle_{\MO, C(\bar \Omega)}\\
&=\langle\mu^\alpha, S(u^\alpha - \hat u) \rangle_{\MO, C(\bar \Omega)}\\
&=\int\limits_\Omega (S^\ast \mu^\alpha) (u^\alpha - \hat u) \dx.
\end{align*}
Now recall that the adjoint equation \eqref{eq:kktreg_o:2} can be rewritten as
$$S^\ast \mu^\alpha = S^\ast(y_d - Su^\alpha) - p^\alpha.$$
Furthermore by assumption $u^\alpha \in \Uad$ and by Theorem \ref{thm:y_bounded_by_u} and \ref{theo:exsol_adjointeq} we obtain that $u^\alpha$, $y^\alpha$ and $p^\alpha$ are uniformly bounded in $L^2(\Omega)$. This now yields

\begin{align*}
\sigma \|\mu^\alpha \|_{\MO} &= \sigma \sup\limits_{f\in C(\bar \Omega), \; \|f\|_\infty = 1} \left| \int\limits_\Omega f d\mu^\alpha \right|\\
&\leq \int\limits_\Omega (S^\ast \mu^\alpha) (u^\alpha - \hat u) \dx\\
&= \int\limits_\Omega (S^\ast(y_d - Su^\alpha) - p^\alpha) (u^\alpha - \hat u) \dx\\
&\leq c\|u^\alpha - \hat u\|_{L^2(\Omega)} (\| y_d - y^\alpha \|_{L^2(\Omega)} + \|p^\alpha\|_{L^2(\Omega)})\\
&\leq c.
\end{align*} 
Dividing the above inequality by $\sigma > 0$ finishes the proof.

\end{proof}

\section{The Augmented Lagrange Method}\label{sec:augmethod}
In the following we want to solve the regularized Problem \eqref{eq:RegProb} for $\alpha \rightarrow 0$. For fixed $\alpha$ we follow  the idea presented in \cite{KarlWachsmuth2017_PREPRINT} and replace the inequality constraint $y\leq\psi$ by an augmented penalization term. In that way we get rid of the measure and work instead with a more regular approximation.
\subsection{The Augmented Lagrange Optimal Control Problem}

First let us introduce the penalty function $P$ which we use to augment the state constraints. Let $\rho>0$ be a given penalty parameter, and let $\mu\in L^2(\Omega)$ with $\mu\ge0$ be a given approximation of the Lagrange multiplier. Now we define

\begin{align}
P(y,\rho,\mu):=\frac{1}{2\rho}\int_{\Omega}\left(\left(\mu+\rho(y-\psi)\right)_+\right)^2 -\mu^2\ \dx.\
\label{def:auglagterm}
\end{align}

\noindent Let now $\rho>0$ and $\mu\in L^2(\Omega)$ be given. Then in each step of the augmented Lagrange method the following
sub-problem has to be solved: Minimize
\begin{equation}
J_\rho^\alpha(y,u,\mu):=\frac{1}{2}||y-y_d||_{L^2(\Omega)}^2+  \beta \|u\|_{L^1(\Omega)}+ \frac{\alpha}{2}||u||_{L^2(\Omega)}^2+P(y,\rho,\mu)
\label{prob_auglag}\tag{$P_{\alpha,\rho,\mu}$}
\end{equation}
with $\alpha > 0$, subject to the state equation and the control constraints
\[
 y = Su, \quad u\in U_{ad}.
\]
A solution of \eqref{prob_auglag} will be denoted by $u_\rho^\alpha$ with associated state $y_\rho^\alpha$ and adjoint state $p_\rho^\alpha$. The next theorem shows that the sub-problem is unique solve-able.

\begin{theorem}[Existence of solutions of the augmented Lagrange sub-problem]
For every $\rho>0$, $\mu\in L^2(\Omega)$ with $\mu\geq 0$ the augmented Lagrange control problem \eqref{prob_auglag} admits a unique solution $u_\rho^\alpha\in U_{ad}$ with associated optimal state $y_\rho^\alpha \in Y$ and adjoint state $p_\rho^\alpha$.
\end{theorem}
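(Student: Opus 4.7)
The plan is to apply the direct method of the calculus of variations. Since $U_{\mathrm{ad}}$ is bounded in $L^\infty(\Omega)$ and therefore in $L^2(\Omega)$, and the functional is bounded below (note that $P(y,\rho,\mu) \geq -\tfrac{1}{2\rho}\|\mu\|_{L^2(\Omega)}^2$, and all other summands are nonnegative), a minimizing sequence $(u_n) \subset U_{\mathrm{ad}}$ exists. By boundedness in $L^2(\Omega)$ and weak closedness of $U_{\mathrm{ad}}$, I extract a subsequence (still denoted $(u_n)$) with $u_n \rightharpoonup u_\rho^\alpha \in U_{\mathrm{ad}}$ in $L^2(\Omega)$.

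Next I would pass to the limit in each term of $J_\rho^\alpha$. By Theorem \ref{thm:S_compact}, the operator $S$ is compact from $L^2(\Omega)$ into $\Y$, so $y_n := S u_n$ converges strongly in $H_0^1(\Omega) \cap C(\bar\Omega)$ to $y_\rho^\alpha := S u_\rho^\alpha$. This gives continuity of the tracking term $\tfrac{1}{2}\|y - y_d\|_{L^2(\Omega)}^2$. For the penalty term, the Nemytskii operator $y \mapsto (\mu + \rho(y-\psi))_+$ is Lipschitz continuous from $L^2(\Omega)$ to $L^2(\Omega)$ (with Lipschitz constant $\rho$), so $P(\cdot,\rho,\mu)$ is continuous in $L^2(\Omega)$, and the strong convergence of $y_n$ yields $P(y_n,\rho,\mu) \to P(y_\rho^\alpha,\rho,\mu)$. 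The remaining two terms $\beta \|u\|_{L^1(\Omega)}$ and $\tfrac{\alpha}{2}\|u\|_{L^2(\Omega)}^2$ are convex and continuous on $L^2(\Omega)$ (the first via the embedding $L^2(\Omega)\hookrightarrow L^1(\Omega)$ since $\Omega$ is bounded), hence weakly lower semicontinuous. Combining everything, $J_\rho^\alpha(y_\rho^\alpha, u_\rho^\alpha, \mu) \leq \liminf_{n\to\infty} J_\rho^\alpha(y_n,u_n,\mu)$, so $u_\rho^\alpha$ is a minimizer.

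Uniqueness then follows from strict convexity of the reduced functional $u \mapsto J_\rho^\alpha(Su,u,\mu)$. The tracking term $u \mapsto \tfrac{1}{2}\|Su - y_d\|_{L^2(\Omega)}^2$ is convex, and the $L^1$-norm is convex. Since $(\cdot)_+^2$ is convex and nondecreasing on $\R$ and $u \mapsto \mu + \rho(Su-\psi)$ is affine, the penalty $u \mapsto P(Su,\rho,\mu)$ is convex in $u$. The Tikhonov term $\tfrac{\alpha}{2}\|u\|_{L^2(\Omega)}^2$ is strictly convex because $\alpha > 0$, so the sum is strictly convex, giving uniqueness of $u_\rho^\alpha$ and hence of $y_\rho^\alpha = Su_\rho^\alpha$. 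Finally, the adjoint state $p_\rho^\alpha$ is obtained as the unique solution of the linear adjoint equation with right-hand side $y_\rho^\alpha - y_d + (\mu + \rho(y_\rho^\alpha - \psi))_+$, which lies in $L^2(\Omega)$; this yields $p_\rho^\alpha \in H_0^1(\Omega) \cap C(\bar\Omega)$ by Theorem \ref{theo:exsol-stateeq} applied to the adjoint equation.

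The only nonroutine point is handling the penalty term $P(\cdot,\rho,\mu)$: both its continuity along the strongly convergent state sequence and its convexity in $u$ must be checked carefully, but both follow cleanly from the Lipschitz and monotonicity properties of $(\cdot)_+$ together with the linearity of $S$. All other ingredients reduce to the standard weak-lower-semicontinuity/strict-convexity machinery, made effective here precisely because $\alpha>0$.
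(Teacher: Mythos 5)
Your proof is correct and follows essentially the same route as the paper, whose own proof simply invokes the standard direct-method argument (closed, bounded, convex admissible set; coercive, weakly lower semicontinuous, strictly convex functional) with a citation to the literature. You have merely written out the details -- the Lipschitz continuity of the penalty term along the compactly mapped states, the convexity of $(\cdot)_+^2$ composed with an affine map, and the strict convexity supplied by the Tikhonov term for $\alpha>0$ -- that the paper leaves implicit.
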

\begin{proof}
Since $U_{ad}$ is closed, bounded and convex and $J_\rho^\alpha$ is coercive, weakly lower semi-continuous and strictly convex, problem \eqref{prob_auglag} has a unique solution $u_\rho^\alpha\in U_{ad}$. For more details see \cite{troeltzsch2010} and \cite{reyes2015numerical}.
\end{proof}

\begin{theorem}[First-order necessary optimality conditions]
\label{thm_optcon_aug}
Let $(u_\rho^\alpha,y_\rho^\alpha)$ be the solution of \eqref{prob_auglag}.
Then, there exists a unique adjoint state $p_\rho^\alpha\in H_0^1(\Omega)$ associated with the optimal control $u_\rho^\alpha$ and a subdifferential $\lama_\rho \in \partial j(u_\rho^\alpha)$, satisfying the following system.
\begin{subequations}\label{eq:kkt_auglag}
\begin{equation}
\label{eq:kkt_auglag:1}
\left\{\begin{alignedat}{2}
Ay_\rho^\alpha&=u_\rho^\alpha&\quad &\text{in } \Omega,\\
y_\rho^\alpha&=0 &&\text{on } \Gamma,
\end{alignedat}\right.
\end{equation}
\begin{equation}
\left\{\begin{alignedat}{2}
A^*p_\rho^\alpha &= y_\rho^\alpha-y_d +\mu_\rho^\alpha&\quad &\text{in } \Omega,\\
p_\rho^\alpha&=0&&\text{on } \Gamma,
\end{alignedat}\right.
\label{eq:kkt_auglag:3}
\end{equation}
\begin{equation}
( p_\rho^\alpha + \alpha u_\rho^\alpha + \beta \lama_\rho, u-u_\rho^\alpha) \geq 0\quad \forall u\in U_{ad}, \label{eq:kkt_auglag:2}
\end{equation}
\begin{equation}
\mu_\rho^\alpha:=\left(\mu+\rho(y_\rho^\alpha-\psi)\right)_+. \label{eq:kkt_auglag:4}
\end{equation}
\end{subequations}
\end{theorem}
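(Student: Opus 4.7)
The plan is to derive the optimality system by passing to the reduced cost functional on $U_{\mathrm{ad}}$ and exploiting convexity together with the smoothness of the penalty term. Writing $\hat J_\rho^\alpha(u) := J_\rho^\alpha(Su,u,\mu)$, we observe that $\hat J_\rho^\alpha$ splits into a Gâteaux-differentiable convex part $F(u) := \tfrac12\|Su-y_d\|_{L^2(\Omega)}^2 + \tfrac{\alpha}{2}\|u\|_{L^2(\Omega)}^2 + P(Su,\rho,\mu)$ and the non-smooth but convex part $\beta j(u) = \beta \|u\|_{L^1(\Omega)}$. Since $U_{\mathrm{ad}}$ is convex and $\hat J_\rho^\alpha$ is convex, the unique minimizer $u_\rho^\alpha$ is characterized by the existence of $\lama_\rho\in\partial j(u_\rho^\alpha)$ with
\[
\bigl(F'(u_\rho^\alpha) + \beta\lama_\rho,\,u-u_\rho^\alpha\bigr)\ge 0\qquad\forall u\in U_{\mathrm{ad}},
\]
using the standard sum rule for subdifferentials of convex functions, valid since $F$ is continuously Fréchet-differentiable on $L^2(\Omega)$.

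The next step is to compute $F'(u_\rho^\alpha)$ explicitly. The key point is the differentiability of the penalty term. The scalar function $t\mapsto \tfrac{1}{2\rho}((\mu+\rho(t-\psi))_+)^2$ is continuously differentiable with derivative $(\mu+\rho(t-\psi))_+$, hence $y\mapsto P(y,\rho,\mu)$ is Fréchet-differentiable from $L^2(\Omega)$ to $\R$ with derivative
\[
\langle \partial_y P(y,\rho,\mu),\delta y\rangle = \bigl((\mu+\rho(y-\psi))_+,\delta y\bigr).
\]
Because $\mu\in L^2(\Omega)$, $y_\rho^\alpha\in C(\bar\Omega)$ and $\psi\in C(\bar\Omega)$, the function $\mu_\rho^\alpha := (\mu+\rho(y_\rho^\alpha-\psi))_+$ lies in $L^2(\Omega)$, so this is well defined. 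Combining the chain rule with the definition $y=Su$ gives
\[
F'(u_\rho^\alpha) = S^\ast\bigl(y_\rho^\alpha - y_d + \mu_\rho^\alpha\bigr) + \alpha u_\rho^\alpha.
\]

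To eliminate $S^\ast$, introduce the adjoint state $p_\rho^\alpha\in H_0^1(\Omega)$ as the unique weak solution of
\[
A^\ast p_\rho^\alpha = y_\rho^\alpha - y_d + \mu_\rho^\alpha\ \text{in }\Omega,\qquad p_\rho^\alpha = 0\ \text{on }\Gamma,
\]
which exists and is unique in $H_0^1(\Omega)$ by the Lax--Milgram lemma, since the right-hand side belongs to $L^2(\Omega)$ and $A^\ast$ is strongly elliptic under Assumption \ref{ass:standing}. By the definition of the adjoint operator one then has $S^\ast(y_\rho^\alpha - y_d + \mu_\rho^\alpha) = p_\rho^\alpha$, and substituting into the variational inequality for the reduced problem yields exactly \eqref{eq:kkt_auglag:2}. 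Equations \eqref{eq:kkt_auglag:1} and \eqref{eq:kkt_auglag:4} are the definitions of $y_\rho^\alpha$ and $\mu_\rho^\alpha$, completing the system \eqref{eq:kkt_auglag}.

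I expect the only genuinely delicate point to be the smoothness of $P(\cdot,\rho,\mu)$: one must verify that the pointwise $C^1$-regularity of $t\mapsto ((\mu+\rho(t-\psi))_+)^2$ transfers into Fréchet differentiability of the corresponding Nemytskii operator from $L^2(\Omega)$ to $L^1(\Omega)$. This is standard since $s\mapsto s_+^2$ has a Lipschitz derivative, so the difference quotients converge in $L^2(\Omega)$ by dominated convergence, but it is the only ingredient beyond textbook convex optimal control theory. Everything else (convexity, existence of adjoint, use of the subdifferential of the $L^1$-norm) is routine given Theorems \ref{theo:exsol-stateeq} and \ref{thm:S_compact} and the subdifferential calculus recalled earlier.
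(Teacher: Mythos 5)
Your derivation is correct and is exactly the standard reduced-functional argument that the paper delegates to its citation of \cite[Corollary 1.3, p.\@ 73]{hinze2009optimization}, extended to the $L^1$-term via the convex sum rule: the paper gives no further detail, and your treatment of the Fr\'echet differentiability of the penalty Nemytskii operator, the $L^2$ right-hand side placing $p_\rho^\alpha$ in $H_0^1(\Omega)$ by Lax--Milgram, and the identification $S^\ast(y_\rho^\alpha-y_d+\mu_\rho^\alpha)=p_\rho^\alpha$ fills in precisely what that reference covers. No gaps; the sum rule you invoke is justified since $j$ is finite and continuous on all of $L^2(\Omega)$ for bounded $\Omega$, so the Moreau--Rockafellar qualification holds.
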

\begin{proof}
Can be proven by extending the results in \cite[Corollary 1.3, p.\@ 73]{hinze2009optimization}.
\end{proof}
\noindent Further we make an analogue observation like in \cite{KarlWachsmuth2017_PREPRINT}. Boundedness of $\mu_\rho^\alpha$ in the $L^1$-norm is enough to get boundedness of the solution $(u_\rho^\alpha,y_\rho^\alpha,p_\rho^\alpha)$ of \eqref{eq:kkt_auglag}.
\begin{theorem}
 \label{theo:bounded-yup}
Let $\rho>0$ and $\mu\in L^2(\Omega)$ be given. Let $s\in [1,N/(N-1))$ and $\alpha$ be bounded. Then there is a constant $c>0$
 independent of $\alpha$,$\rho$, and $\mu$
 such that for all solutions $(u_\rho^\alpha,y_\rho^\alpha,p_\rho^\alpha,\mu_\rho^\alpha)$ of \eqref{eq:kkt_auglag}
 it holds
 \[
  \|y_\rho^\alpha\|_{H^1(\Omega)}+  \|y_\rho^\alpha\|_{C(\bar\Omega)} +
  \|u_\rho^\alpha\|_{L^2(\Omega)} + \|p_\rho^\alpha\|_{W^{1,s}(\Omega)} \le c ( \|\mu_\rho^\alpha\|_{L^1(\Omega)} + 1).
 \]
\end{theorem}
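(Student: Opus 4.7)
The plan is to bound the four quantities in the order $u_\rho^\alpha$, $y_\rho^\alpha$, $p_\rho^\alpha$, exploiting the regularity already collected in Section~\ref{sec:prelim}. The only nontrivial point is converting the $L^1$-bound on $\mu_\rho^\alpha$ into a $W^{1,s}$-bound on $p_\rho^\alpha$, which is where the assumption $\mu_\rho^\alpha \geq 0$ becomes essential.

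First I would use the control constraint. Since $u_\rho^\alpha \in \Uad$ and $u_a,u_b \in L^\infty(\Omega)$ by Assumption~\ref{ass:standing}, the bound
\[
\|u_\rho^\alpha\|_{L^2(\Omega)} \leq |\Omega|^{1/2}\max(\|u_a\|_{L^\infty(\Omega)},\|u_b\|_{L^\infty(\Omega)}) =: c_1
\]
holds independently of $\alpha$, $\rho$ and $\mu$. Plugging this into the a~priori estimate from Theorem~\ref{theo:exsol-stateeq} applied to the state equation \eqref{eq:kkt_auglag:1} immediately yields
\[
\|y_\rho^\alpha\|_{H_0^1(\Omega)} + \|y_\rho^\alpha\|_{C(\bar\Omega)} \leq c\,\|u_\rho^\alpha\|_{L^2(\Omega)} \leq c_2,
\]
again uniformly in $\alpha$, $\rho$, $\mu$. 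So the first three summands of the claimed inequality are controlled by an absolute constant and the factor $(\|\mu_\rho^\alpha\|_{L^1(\Omega)}+1)$ is really only needed for the adjoint.

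For the adjoint state $p_\rho^\alpha$, I would invoke Theorem~\ref{theo:exsol_adjointeq}, which handles elliptic equations with a Borel-measure right-hand side and gives
\[
\|p_\rho^\alpha\|_{W_0^{1,s}(\Omega)} \leq c\bigl(\|y_\rho^\alpha\|_{L^2(\Omega)} + \|y_d\|_{L^2(\Omega)} + \|\mu_\rho^\alpha\|_{\MO}\bigr).
\]
By \eqref{eq:kkt_auglag:4} the multiplier $\mu_\rho^\alpha = (\mu+\rho(y_\rho^\alpha-\psi))_+$ lies in $L^2(\Omega) \subset L^1(\Omega)$ and is nonnegative, so its total variation as a measure coincides with its $L^1$-norm,
\[
\|\mu_\rho^\alpha\|_{\MO} = \int_\Omega \mu_\rho^\alpha \dx = \|\mu_\rho^\alpha\|_{L^1(\Omega)}.
\]
Combining this identity with the previous two estimates and absorbing the constants $\|y_d\|_{L^2(\Omega)}$ and $c_2$ into the factor $(\|\mu_\rho^\alpha\|_{L^1(\Omega)}+1)$ finishes the proof.

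The main (and only) obstacle is the measure-to-$L^1$ reduction for the adjoint bound; everything else is a direct reading of Theorems~\ref{theo:exsol-stateeq} and~\ref{theo:exsol_adjointeq} together with the $L^\infty$-boundedness of $\Uad$.
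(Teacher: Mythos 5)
Your proof is correct, but it takes a genuinely different --- and more elementary --- route than the paper. The paper follows \cite[Theorem 3.3]{KarlWachsmuth2017_PREPRINT}: it tests the state equation with $p_\rho^\alpha$ and the adjoint equation with $y_\rho^\alpha$ to obtain the identity $( p_\rho^\alpha,u_\rho^\alpha) = (y_\rho^\alpha-y_d, y_\rho^\alpha) + (\mu_\rho^\alpha,y_\rho^\alpha)$, inserts a fixed $u\in\Uad$ into the variational inequality \eqref{eq:kkt_auglag:2}, handles the $L^1$-term via the subgradient inequality $(\lambda_\rho^\alpha, u - u_\rho^\alpha) \leq \|u\|_{L^1(\Omega)} - \|u_\rho^\alpha\|_{L^1(\Omega)}$, and closes the resulting energy estimate with Young's inequality and the same two regularity theorems you cite. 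You instead observe that $\Uad$ is bounded in $L^\infty(\Omega)$ by Assumption \ref{ass:standing}, which bounds $u_\rho^\alpha$ and hence, via Theorem \ref{theo:exsol-stateeq}, $y_\rho^\alpha$ by absolute constants, and that an $L^1$-density regarded as an element of $\MO$ has total variation equal to its $L^1$-norm (your appeal to nonnegativity is not actually needed for this identity), so that Theorem \ref{theo:exsol_adjointeq} yields the adjoint bound directly. Your version is shorter and makes transparent that the factor $\|\mu_\rho^\alpha\|_{L^1(\Omega)}+1$ is genuinely needed only for $p_\rho^\alpha$; the paper's energy argument is the variant that stays parallel to the cited reference and would be the natural starting point if $\Uad$ were not $L^\infty$-bounded --- although, as written, the paper's proof also concludes by invoking the $L^2$-boundedness of $u_\rho^\alpha$, which is precisely the fact your proof starts from. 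Both arguments are valid under the paper's standing assumptions.
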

\begin{proof}
The proof just differs from the one of \cite[Theorem 3.3]{KarlWachsmuth2017_PREPRINT} concerning the additional subdifferential in \eqref{eq:kkt_auglag:3}. Hence, we give just the most important steps here.
Let us test the state equation \eqref{eq:kkt_auglag:1} with $p_\rho^\alpha$ and the adjoint equation \eqref{eq:kkt_auglag:3}  with $y_\rho^\alpha$. This yields
\[
 ( p_\rho^\alpha,u_\rho^\alpha) = (y_\rho^\alpha-y_d, y_\rho^\alpha) + (\mu_\rho^\alpha,y_\rho^\alpha).
\]
Now fix a $u \in \Uad$ and use it in \eqref{eq:kkt_auglag:2}, yielding
\[
 (y_\rho^\alpha-y_d, y_\rho^\alpha) + (\mu_\rho^\alpha,y_\rho^\alpha) \le ( \alpha u_\rho^\alpha, u-u_\rho^\alpha) + (p_\rho^\alpha , u) +(\beta\lambda_\rho^\alpha,u-u_\rho^\alpha).
\]

\noindent By Young's inequality and exploiting $(\lambda_\rho^\alpha, u - u_\rho^\alpha) \leq \|u\|_{L^1(\Omega)} - \|u_\rho^\alpha\|_{L^1(\Omega)}$, we have
\begin{align*}
\frac{1}{2}\norm{y_\rho^\alpha}^2+\frac{\alpha}{2}\norm{u_\rho^\alpha}^2&\leq \frac{1}{2}\|y_d\|_{L^2(\Omega)}^2+ \|\mu_\rho^\alpha\|_{L^1(\Omega)}\|y_\rho^\alpha\|_{C(\bar\Omega)}  + \frac{\alpha}{2} \| u\|_{L^2(\Omega)}^2
 + \|p_\rho^\alpha\|_{L^2(\Omega)} \|u\|_{L^2(\Omega)} \\
 & + \beta \left( \|u\|_{L^1(\Omega)} - \|u_\rho^\alpha\|_{L^1(\Omega)} \right).
\end{align*}

\noindent Let us fix $\bar s\in (1,N/(N-1))$ such that $W^{1,\bar s}(\Omega)$ is continuously embedded in $L^2(\Omega)$. From Theorem \ref{theo:exsol-stateeq} we now get $\|y_\rho^\alpha\|_{H^1(\Omega)}+\|y_\rho^\alpha\|_{C(\bar \Omega)} \leq c \|u_\rho^\alpha\|_{L^2(\Omega)}$ and from Theorem \ref{theo:exsol_adjointeq} we get $\|p_\rho^\alpha\|_{L^2(\Omega)} \leq c \left( \|y_\rho^\alpha\|_{L^2(\Omega)} + \|y_d\|_{L^2(\Omega)} + \|\mu_\rho^\alpha\|_{L^1(\Omega)}  \right)$. Now using the fact that $u_\rho^\alpha$ is bounded in $L^2(\Omega)$ and $u$ is fixed to obtain the result.

\end{proof}

\subsection{The Prototypical Augmented Lagrange Algorithm}\label{sec:auglagmethod}

In the following, let $\alk$ denote the augmented Lagrange sub-problem \eqref{prob_auglag} for
given penalty parameter $\rho:=\rho_k$, multiplier $\mu:=\mu_k$ and regularization parameter $\alpha := \alpha_k$.
We will denote its solution by $(\bar u_k, \bar y_k)$ with adjoint state $ \bar p_k$
and updated multiplier $\bmu_k$, which is given by \eqref{eq:kkt_auglag:4}.
\begin{myalgorithm}\label{alg_1}
Let $\alpha_1 > 0$, $\rho_1>0$ and $\mu_1\in L^2(\Omega)$ be given with $\mu_1\ge0$. Choose $\theta>1$.
\begin{enumerate}
 \item Solve $\alk$ and obtain $(\bar u_k, \bar y_k, \bar p_k)$.
 \item Set $\bar\mu_k :=(\mu_k+\rho_k(\by_k-\psi))_+$.
 \item If the step is successful set $\mu_{k+1}:=\mu_k$, $\rho_{k+1}:=\rho_k$ and choose $0 < \alpha_{k+1}$ such that $\alpha_{k+1} < \alpha_k$.
 \item Otherwise set $\mu_{k+1}:=\bar \mu_k$ and $\alpha_{k+1} := \alpha_k$, increase penalty parameter $\rho_{k+1}:=\theta \rho_k$.
 \item If the stopping criterion is not satisfied set $k:=k+1$ and go to step 1.
\end{enumerate}
\end{myalgorithm}
\noindent Please note that we only decrease the regularization parameter $\alpha_k$ if the algorithm produces a successful step.
Let us restate the system $\alk$ that is solved by $(\bar u_k, \bar y_k, \bar p_k, \bar \mu_k)$:
\begin{subequations}\label{eq_kkt_alg}
\begin{equation}
\left\{\begin{alignedat}{2}
A \bar y_k&=\bar u_k&\quad &\text{in } \Omega,\\
\bar y_k&=0 &&\text{on } \Gamma,
\end{alignedat}\right.
\label{eq_kkt_alg_1}
\end{equation}
\begin{equation}
\left\{\begin{alignedat}{2}
A^*\bar p_k &= \bar y_k-y_d +\bar \mu_k&\quad &\text{in } \Omega,\\
\bar p_k&=0&&\text{on } \Gamma,
\end{alignedat}\right.
\label{eq_kkt_alg_2}
\end{equation}
\begin{equation}\label{eq_kkt_alg_3_1}
\bar u_k\in \Uad,
\end{equation}
\begin{equation}
\label{eq_kkt_alg_3}
( \bar p_k + \alpha_k \bar u_k + \beta \bar \lambda_k, u- \bar u_k) \geq 0\quad \forall u\in \Uad,
\end{equation}
\begin{equation}
\bar \mu_k:=\left(\mu_k+\rho_k(\bar y_k-\psi)\right)_+.
\end{equation}
\end{subequations}

\subsection{The Multiplier Update Rule}
\label{sec:rule}

We start this section with a technical estimate, which will be useful in the subsequent analysis.
\begin{lemma}\label{lemma:erroryubounded1}
Let $\alpha_k>0$ be given and let $(\uak,\yak,\pak,\muak)$ be the solution of \eqref{eq:kktreg_o} and let $(\bar u_k,\bar y_k,\bar p_k,\bar \mu_k)$ solve \eqref{eq_kkt_alg}.
Then  it holds
\begin{align}
\norm{\yak-\bar y_k}_{\Lp{2}}^2+\alpha_k\norm{\uak- \bar u_k}_{\Lp{2}}^2\leq (\bar \mu_k, \psi- \bar y_k) +\la \mu^{\alpha_k}, \by_k -\psi\ra.
\label{eq:error_yu1}
\end{align}
\end{lemma}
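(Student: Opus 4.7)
The plan is to test the two variational inequalities satisfied by $u^{\alpha_k}$ and $\bar u_k$ against each other, then use the state/adjoint equations to produce the $\|\yak-\bar y_k\|^2$ term, and finally use complementarity and sign conditions to rewrite the multiplier contribution in terms of the slack $\psi - y$.

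First, I take $u = \bar u_k$ in \eqref{eq:kktreg_o:3} and $u = u^{\alpha_k}$ in \eqref{eq_kkt_alg_3} and add the two inequalities. After rearranging, this yields
\[
\alpha_k\|u^{\alpha_k}-\bar u_k\|_{\Lp{2}}^2 \leq (\pak - \bar p_k,\bar u_k - \uak) + \beta(\lama - \bar\lambda_k, \bar u_k - \uak).
\]
The subdifferential term has a favourable sign: since $\lama\in\partial j(\uak)$ and $\bar\lambda_k\in\partial j(\bar u_k)$ and the convex function $j$ has monotone subdifferential, we have $(\lama - \bar\lambda_k, \uak - \bar u_k)\geq 0$, so that $\beta(\lama - \bar\lambda_k, \bar u_k - \uak)\leq 0$ and it can be dropped.

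Next I handle the pairing $(\pak - \bar p_k,\bar u_k - \uak)$ by combining the state equations \eqref{eq:kktreg_o:1}, \eqref{eq_kkt_alg_1} and the adjoint equations \eqref{eq:kktreg_o:2}, \eqref{eq_kkt_alg_2}. Testing the state equation with $\pak - \bar p_k$ and the adjoint equation with $\bar y_k - \yak$ gives
\[
(\pak - \bar p_k, \bar u_k - \uak) = (\yak - \bar y_k, \bar y_k - \yak) + \langle \muak - \bar\mu_k, \bar y_k - \yak\rangle = -\|\yak - \bar y_k\|_{\Lp{2}}^2 + \langle \muak - \bar\mu_k, \bar y_k - \yak\rangle,
\]
the duality pairing making sense because $\bar y_k - \yak \in \CO$. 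Combining the above steps yields
\[
\|\yak-\bar y_k\|_{\Lp{2}}^2 + \alpha_k\|\uak-\bar u_k\|_{\Lp{2}}^2 \leq \langle \muak, \bar y_k - \yak\rangle - ( \bar\mu_k, \bar y_k - \yak).
\]

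It remains to rewrite the right-hand side. Using the complementarity \eqref{eq:kktreg_o:4}, $\langle \muak, \yak - \psi\rangle = 0$, I split $\langle \muak,\bar y_k - \yak\rangle = \langle \muak, \bar y_k - \psi\rangle + \langle \muak, \psi - \yak\rangle = \langle \muak, \bar y_k - \psi\rangle$. Similarly
\[
-(\bar\mu_k, \bar y_k - \yak) = (\bar\mu_k,\psi - \bar y_k) - (\bar\mu_k, \psi - \yak).
\]
Here $(\bar\mu_k, \psi - \yak)\geq 0$ because $\bar\mu_k \geq 0$ by \eqref{eq:kkt_auglag:4} and $\psi - \yak \geq 0$ since $\yak$ is feasible. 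Dropping this non-positive contribution gives exactly the claimed estimate \eqref{eq:error_yu1}.

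The main obstacle is just bookkeeping the correct signs in the last step: one has to observe that $\yak$ (not $\bar y_k$) is feasible, which is what makes the $\bar\mu_k$-term sign-definite after the $\pm \psi$ split, while the $\muak$-term collapses via complementarity.
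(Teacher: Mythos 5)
Your proposal is correct and follows essentially the same route as the paper's proof: testing the two variational inequalities against each other, dropping the subdifferential difference by monotonicity, converting $(\pak-\bar p_k,\bar u_k-\uak)$ via the state and adjoint equations, and then splitting the multiplier terms with $\pm\psi$ using complementarity of $\muak$ and the signs of $\bar\mu_k$ and $\psi-\yak$. The only cosmetic difference is that the paper phrases the adjoint step through $S^\ast$ rather than by testing the PDEs directly.
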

\begin{proof}
Using \eqref{eq:kktreg_o:3} and \eqref{eq_kkt_alg_3}, we obtain
\begin{align}
\begin{split} 
0 & \leq (\pak - \bp_k + \alpha_k ( \uak - \bu_k) + \beta ( \lambda^{\alpha_k} - \bar \lambda_k ), \bar u_k - \uak)\\
& = ( S^\ast (S \uak - S  \bu_k),  \bu_k - \uak ) - \alpha_k ( \bu_k-\uak,  \bu_k - \uak)\\
& \quad + (S^\ast(   \muak - \bmu_k),  \bu_k - \uak) + \beta ( \lambda^{\alpha_k} - \bar \lambda_k , \bar u_k - \uak)
\label{eq:est1}
\end{split}
\end{align}
Now we use that the subdifferential is a monotone operator, which yields $( \lambda^{\alpha_k} - \bar \lambda_k , \bar u_k - \uak) \leq 0$. Note that $\lambda^{\alpha_k} \in \partial j(\uak)$ and $\bar \lambda_k \in \partial j(\bu_k)$. This yields
$$\|\yak -  \by_k\|^2  + \alpha_k \| \bu_k - \uak\|^2 \leq  (  \bmu_k - \muak, \yak - \by_k)$$
The term on the right-hand side of equation \eqref{eq:est1} can be split into two parts:
\begin{align}
\begin{split}
(\bmu_k,\yak-\by_k) &= (\bmu_k,\yak-\psi)+(\bmu_k,\psi-\by_k) \leq (\bmu_k,\psi-\by_k)
\end{split}
\label{eq:mu_term1}
\end{align}
and
\begin{align}
\begin{split}
-\la{\muak_k},\yak-\by_k\ra &= -\la{\muak},\yak-\psi\ra-\la{\muak},\psi-\by_k\ra=\la{\muak},\by_k-\psi\ra.
\end{split}
\label{eq:mu_term2}
\end{align}
Here, we used the complementarity relation \eqref{eq:kktreg_o:4} as well as $\yak\le\psi$ and $\bmu_k\ge0$.
Putting the inequalities \eqref{eq:est1}, \eqref{eq:mu_term1}, and \eqref{eq:mu_term2} together, we get
\begin{align*}
\norm{\yak-\by_k}_{\Lp{2}}^2+\alpha_k \norm{\uak-\bu_k}_{\Lp{2}}^2 \leq (\bmu_k, \psi-\by_k) +\la \muak, y_k -\psi\ra.
\end{align*}
which is the claim.
\end{proof}

\noindent The following result motivates the update rule.
\begin{lemma}\label{lemma:error_yu}
Let $(\uak,\yak,\pak,\muak)$ and $(\bu_k,\by_k,\bp_k,\bmu_k)$ be given as in Lemma \ref{lemma:erroryubounded1}. Then it holds
\begin{align}
\frac{1}{\alpha_k} \norm{\yak-\by_k}_{\Lp{2}}^2+ \norm{\uak-\bu_k}_{\Lp{2}}^2\leq \frac{c}{\alpha_k} \big( \norm{(\by_k-\psi)_+}_{C(\bar{\Omega})} + |( \bmu_k,\psi-\by_k)| \big).
\label{eq:error_yu}
\end{align}
\end{lemma}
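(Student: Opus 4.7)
The plan is to obtain \eqref{eq:error_yu} as an essentially direct consequence of the estimate established in Lemma \ref{lemma:erroryubounded1}, together with the uniform multiplier bound from Lemma \ref{lem:reg_mult_bounded}. Recall that Lemma \ref{lemma:erroryubounded1} already gives
\[
\norm{\yak-\by_k}_{\Lp{2}}^2+\alpha_k\norm{\uak-\bu_k}_{\Lp{2}}^2\le (\bmu_k,\psi-\by_k)+\la\muak,\by_k-\psi\ra,
\]
so all that remains is to re-express the right-hand side in the form announced in the statement.

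First I would handle the term $(\bmu_k,\psi-\by_k)$: this appears already on the right-hand side of the claim as $|(\bmu_k,\psi-\by_k)|$, which is of course an upper bound. Next, I would estimate the duality pairing $\la\muak,\by_k-\psi\ra$. Since $\muak\ge 0$ in the sense of measures and $\by_k-\psi\le(\by_k-\psi)_+$ pointwise in $\bar\Omega$, one gets
\[
\la\muak,\by_k-\psi\ra_{\MO,\CO}\le\la\muak,(\by_k-\psi)_+\ra_{\MO,\CO}\le \|\muak\|_{\MO}\,\|(\by_k-\psi)_+\|_{C(\bar\Omega)}.
\]
At this point, Lemma \ref{lem:reg_mult_bounded} enters: it supplies a constant $C$ (independent of $\alpha_k$) with $\|\muak\|_{\MO}\le C$, so the pairing is bounded by $c\,\|(\by_k-\psi)_+\|_{C(\bar\Omega)}$.

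Combining these two estimates and dividing the resulting inequality by $\alpha_k>0$ yields
\[
\frac{1}{\alpha_k}\norm{\yak-\by_k}_{\Lp{2}}^2+\norm{\uak-\bu_k}_{\Lp{2}}^2\le \frac{c}{\alpha_k}\big(\|(\by_k-\psi)_+\|_{C(\bar\Omega)}+|(\bmu_k,\psi-\by_k)|\big),
\]
which is exactly \eqref{eq:error_yu}. The only slightly subtle ingredient is the appeal to Lemma \ref{lem:reg_mult_bounded}, which is needed to ensure that the constant $c$ is genuinely independent of $k$ (and in particular of $\alpha_k$); the rest of the argument consists of the trivial pointwise estimate $\by_k-\psi\le(\by_k-\psi)_+$ together with the sign $\muak\ge 0$. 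There is no real obstacle beyond keeping track of these monotonicity and sign conventions.
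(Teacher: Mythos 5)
Your argument is correct and coincides with the paper's own proof: both pass from Lemma \ref{lemma:erroryubounded1} via the estimate $\la\muak,\by_k-\psi\ra\le\norm{\muak}_{\MO}\norm{(\by_k-\psi)_+}_{C(\bar{\Omega})}$, invoke the uniform multiplier bound of Lemma \ref{lem:reg_mult_bounded}, and divide by $\alpha_k$. No gaps.
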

\begin{proof}
From Lemma \ref{lemma:erroryubounded1} we conclude using the estimate
\[
\la\bar{\muak},\by_k-\psi\ra\leq \norm{\muak}_{\mathcal{M}(\Omega)}\norm{(\by_k-\psi)_+}_{C(\bar{\Omega})}.
\]
The result now follows using the uniform boundedness of $\muak$, see Lemma \ref{lem:reg_mult_bounded}.
\end{proof}
\noindent This result shows that the iterates $(\bar u_k, \bar y_k)$ will converge to the solution of the regularized problem for fixed $\alpha_k$ if the
quantity
\[
\frac{1}{\alpha_k} \big( \norm{(\bar y_k-\psi)_+}_{C(\bar{\Omega})} + |(\bmu_k,\psi-\by_k)| \big)
\]
tends to zero for $k\to\infty$. To construct our update rule we follow the idea presented in \cite{KarlWachsmuth2017_PREPRINT} and define a step of Algorithm \ref{alg_1} to be successful if the condition
\[
 \frac{1}{\alpha_{k}} \big( \norm{(\by_k-\psi)_+}_{C(\bar{\Omega})} + |( \bmu_k,\psi-\by_k)| \big)
 \le \frac{\tau}{\alpha_n}  \left( \norm{(\by_{n}-\psi)_+}_{C(\bar{\Omega})} + |( \bmu_{n},\psi-\by_{n})| \right)
\]
is satisfied with $\tau\in (0,1)$. Here, we denoted by step $n$, $n<k$, the previous successful step. In \cite{KarlWachsmuth2017_PREPRINT} this quantity was also used as a stopping criterion. However this is not possible here, as we proceed to let $\alpha$ go to $0$. Instead we will check the first order optimality conditions for problem \eqref{eq:optcontprob} as a stopping criterion. This will be described in detail in section \ref{sec:alg_detail}.

\subsection{The Augmented Lagrange Algorithm in Detail}\label{sec:alg}

Let us now formulate the algorithm based on the update rule established in the previous section.

\begin{myalgorithm} \label{alg_detail}
Let $\alpha_1 > 0$, $\rho_1>0$ and $\mu_1\in L^2(\Omega)$ be given with $\mu_1\ge0$. Choose $\theta>1, 0 < \omega < 1$, $\tau\in(0,1)$ and $R_0^+>>1$. Set $k:=1$ and $n:=1$.
\begin{enumerate}
 \item Solve $\alk$ and obtain $(\bu_k,\by_k,\bp_k)$.
 \item Set $\bar\mu_k :=(\mu_k+\rho_k(\by_k-\psi))_+$.
 \item Compute $R_k:= \frac{1}{\alpha_k} \big( \norm{(\by_k-\psi)_+}_{C(\bar{\Omega})} + |( \bmu_k,\psi-\by_k)| \big)$.
 \item If $R_k\le \tau R^+_{n-1}$ then the step $k$ is successful, set 
 $$\begin{cases}\alpha_{k+1} := \omega \alpha_k\\
 \mu_{k+1}:=\bmu_k\\
 \rho_{k+1}:=\rho_k \end{cases}$$ 
and define $(u_n^+,y_n^+,p_n^+):=(\bu_k,\by_k,\bp_k)$, as well as $\mu_n^+:=\mu_{k+1}$ and $R_n^+:=R_k$.
 Set $n:=n+1$.
 \item Otherwise if the step $k$ is not successful, set $\mu_{k+1}:=\mu_k$ and $\alpha_{k+1} := \alpha_k$, and increase the penalty parameter $\rho_{k+1}:=\theta \rho_k$.
 \item If a stopping criterion is satisfied stop, otherwise set $k:=k+1$ and go to step 1.
\end{enumerate}
\end{myalgorithm}

Again, please note that the regularization parameter $\alpha_k$ is only decreased when the algorithm produces a successful step. We will take advantage of this in the subsequent analysis.

\subsection{Infinitely many Successful Steps}
The main aim of this section is to prove that the proposed algorithm produces infinitely many successful steps. In order to prove this we consider the augmented Lagrange KKT system of the minimization problem
$$\text{Minimize }J_\rho^\alpha(y,u,\mu) =\frac{1}{2}||y-y_d||_{L^2(\Omega)}^2+  \beta \|u\|_{L^1(\Omega)}+ \frac{\alpha}{2}||u||_{L^2(\Omega)}^2+P(y,\rho,\mu)$$
subject to $y = Su$ and $u \in \Uad$. We fix the multiplier approximation $\mu$, the regularization parameter $\alpha$ and let the penalization parameter $\rho$ tend to infinity. As mentioned in \cite{KarlWachsmuth2017_PREPRINT} the problem reduces to a penalty method with additional shift parameter $\mu$. The only difference to the approach in \cite{KarlWachsmuth2017_PREPRINT} is, that we have an additional $L^1$-term in the objective functional. However, taking a closer look at \cite[Lemma 3.6]{KarlWachsmuth2017_PREPRINT} reveals that it also holds for an additional $L^1$-term. This yields the following Lemma.
\begin{lemma}
Let $\mu\in L^2(\Omega)$ with $\mu\ge0$ and $\alpha > 0$ be given. Let $(u_\alpha^{\rho},y_\alpha^{\rho},p_\alpha^{\rho})$ be solutions of \eqref{prob_auglag} with $\rho > 0$ and $(\ua,\ya)$ be the solution of \eqref{eq:RegProb}. Then it holds  $u_\rho^\alpha \rightarrow \ua$ in $L^2(\Omega)$ and $y_\rho^\alpha \to \ya$ in $H_0^1(\Omega) \cap C(\bar \Omega)$ for $\rho\to\infty$.
\label{lemma:convrhounbounded}
\end{lemma}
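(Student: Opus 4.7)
My plan is to adapt the strategy of \cite[Lemma 3.6]{KarlWachsmuth2017_PREPRINT}; the additional non-smooth term $\beta\|u\|_{L^1(\Omega)}$ causes no essential difficulty since on the bounded domain $\Omega$ it is convex and continuous on $L^2(\Omega)$, hence weakly lower semicontinuous on $L^2(\Omega)$. The starting point is that $y^\alpha \leq \psi$ together with $\mu \geq 0$ gives the pointwise inequality $(\mu + \rho(y^\alpha - \psi))_+ \leq \mu$, so that
\[
P(y^\alpha,\rho,\mu) \in \Bigl[-\tfrac{1}{2\rho}\|\mu\|_{L^2(\Omega)}^2,\; 0\Bigr],
\]
and the same lower bound applies to $P(y_\rho^\alpha,\rho,\mu)$ since $((\cdot)_+)^2 \geq 0$. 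In particular $P(y^\alpha,\rho,\mu) \to 0$ as $\rho\to\infty$.

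By optimality of $u_\rho^\alpha$ for \eqref{prob_auglag} and admissibility of $u^\alpha$ for that subproblem,
\[
\tfrac{\alpha}{2}\|u_\rho^\alpha\|_{L^2(\Omega)}^2 + P(y_\rho^\alpha,\rho,\mu) \leq J_\rho^\alpha(y_\rho^\alpha,u_\rho^\alpha,\mu) \leq J_\rho^\alpha(y^\alpha,u^\alpha,\mu) \leq J_\alpha(y^\alpha,u^\alpha),
\]
which yields a uniform $L^2$-bound on $u_\rho^\alpha$ and a uniform upper bound on the penalty. Extracting a subsequence with $u_\rho^\alpha \rightharpoonup u^\ast$ in $L^2(\Omega)$, Theorem \ref{thm:S_compact} furnishes strong convergence $y_\rho^\alpha \to y^\ast := Su^\ast$ in $H_0^1(\Omega)\cap C(\bar\Omega)$. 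Dividing the upper bound on $P$ by $\rho$ I would obtain
\[
\Bigl\| \Bigl(\tfrac{\mu}{\rho} + (y_\rho^\alpha - \psi)\Bigr)_+ \Bigr\|_{L^2(\Omega)}^2 \leq \frac{c}{\rho} \to 0,
\]
and passing to the limit (using $\mu/\rho \to 0$ in $L^2(\Omega)$ and the strong convergence of $y_\rho^\alpha$) gives $(y^\ast-\psi)_+ = 0$, so $u^\ast$ is feasible for \eqref{eq:RegProb}.

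Passing to the limit in the comparison inequality, the continuity of $u\mapsto \tfrac{1}{2}\|Su-y_d\|_{L^2(\Omega)}^2$ along the convergent subsequence together with weak lower semicontinuity of $\tfrac{\alpha}{2}\|\cdot\|_{L^2(\Omega)}^2$ and of $\|\cdot\|_{L^1(\Omega)}$ yields $J_\alpha(y^\ast,u^\ast) \leq J_\alpha(y^\alpha,u^\alpha)$. Together with feasibility of $u^\ast$ and the strict convexity of $J_\alpha$ (via the term $\tfrac{\alpha}{2}\|u\|_{L^2(\Omega)}^2$), uniqueness of the solution of \eqref{eq:RegProb} forces $u^\ast = u^\alpha$, so the whole sequence converges weakly. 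The same limit passage also delivers $\limsup_\rho \|u_\rho^\alpha\|_{L^2(\Omega)}^2 \leq \|u^\alpha\|_{L^2(\Omega)}^2$, which combined with weak convergence and weak lower semicontinuity of the norm yields $\|u_\rho^\alpha\|_{L^2(\Omega)} \to \|u^\alpha\|_{L^2(\Omega)}$ and hence strong convergence of the controls in $L^2(\Omega)$. The announced strong convergence of the states in $H_0^1(\Omega)\cap C(\bar\Omega)$ then follows by applying the a priori estimate of Theorem \ref{theo:exsol-stateeq} to $u_\rho^\alpha - u^\alpha$.

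The main obstacle is extracting feasibility of the weak limit from a penalty bound that by itself grows with $\rho$; this is resolved by the $1/\rho$-rescaling above, which produces a bound vanishing in the limit despite the divergent penalty parameter. The $L^1$-term plays no active role beyond its weak lower semicontinuity on $L^2(\Omega)$.
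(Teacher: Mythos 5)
Your proof is correct, and it is essentially the standard penalty-convergence argument that the paper itself only cites (deferring to \cite[Lemma 3.6]{KarlWachsmuth2017_PREPRINT} with the remark that the extra $L^1$-term changes nothing): comparison with the feasible point $u^\alpha$ using $P(y^\alpha,\rho,\mu)\le 0$, the $1/\rho$-rescaled penalty bound to get feasibility of the weak limit, identification of the limit by uniqueness, and norm convergence to upgrade to strong convergence. Your handling of the $L^1$-term via weak lower semicontinuity is exactly the observation the authors allude to.
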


\noindent With a similar argument we can establish the next lemma. Again the proof can be found in \cite[Lemma 3.7]{KarlWachsmuth2017_PREPRINT}.
\begin{lemma}
\label{lemma:convmuinL2}
Under the same assumptions as in Lemma \ref{lemma:convrhounbounded}, it holds
\begin{align*}
\lim_{\rho \to \infty}(\mu_\rho^\alpha,\psi-y_\rho^\alpha)=0.
\end{align*}
\end{lemma}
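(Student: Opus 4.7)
My plan rests on two algebraic identities that the pointwise definition $\mu_\rho^\alpha = (\mu + \rho(y_\rho^\alpha - \psi))_+$ from \eqref{eq:kkt_auglag:4} yields immediately. Decomposing $\Omega$ into the active set $\{\mu + \rho(y_\rho^\alpha - \psi) > 0\}$, where $\psi - y_\rho^\alpha = (\mu - \mu_\rho^\alpha)/\rho$, and its complement, where $\mu_\rho^\alpha = 0$, multiplying by $\mu_\rho^\alpha$ and integrating gives
\[
(\mu_\rho^\alpha,\, \psi - y_\rho^\alpha) = \frac{1}{\rho}(\mu,\mu_\rho^\alpha) - \frac{1}{\rho}\|\mu_\rho^\alpha\|_{L^2(\Omega)}^2,
\]
and the same decomposition yields $P(y_\rho^\alpha, \rho, \mu) = \frac{1}{2\rho}\|\mu_\rho^\alpha\|_{L^2(\Omega)}^2 - \frac{1}{2\rho}\|\mu\|_{L^2(\Omega)}^2$. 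From these it is clear that I only need to prove $\|\mu_\rho^\alpha\|_{L^2(\Omega)}^2/\rho \to 0$; Cauchy-Schwarz then takes care of the cross term.

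To control $\|\mu_\rho^\alpha\|_{L^2(\Omega)}^2/\rho$, I would test the optimality of $(u_\rho^\alpha, y_\rho^\alpha)$ for \eqref{prob_auglag} against the admissible competitor $(u^\alpha, y^\alpha)$; indeed $u^\alpha \in \Uad$. The key observation is that $y^\alpha \leq \psi$ and $\mu \geq 0$ imply $(\mu + \rho(y^\alpha - \psi))_+ \leq \mu$ pointwise, hence $P(y^\alpha, \rho, \mu) \leq 0$. Combining the optimality inequality $J_\rho^\alpha(y_\rho^\alpha, u_\rho^\alpha, \mu) \leq J_\rho^\alpha(y^\alpha, u^\alpha, \mu)$ with the identity for $P(y_\rho^\alpha, \rho, \mu)$ and rearranging produces
\[
\frac{1}{2\rho}\|\mu_\rho^\alpha\|_{L^2(\Omega)}^2 \leq \frac{1}{2\rho}\|\mu\|_{L^2(\Omega)}^2 + \Delta_\rho,
\]
where $\Delta_\rho$ collects the differences of the remaining objective terms (tracking, $L^1$, Tikhonov) between $(u^\alpha, y^\alpha)$ and $(u_\rho^\alpha, y_\rho^\alpha)$.

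By Lemma \ref{lemma:convrhounbounded} we have $u_\rho^\alpha \to u^\alpha$ strongly in $L^2(\Omega)$ (hence also in $L^1(\Omega)$) and $y_\rho^\alpha \to y^\alpha$ in $H_0^1(\Omega)\cap C(\bar \Omega)$, so $\Delta_\rho \to 0$; together with $\|\mu\|_{L^2(\Omega)}^2/\rho \to 0$ this forces $\|\mu_\rho^\alpha\|_{L^2(\Omega)}^2/\rho \to 0$. Inserting back into the first identity and using Cauchy-Schwarz,
\[
\left|\tfrac{1}{\rho}(\mu,\mu_\rho^\alpha)\right| \leq \tfrac{1}{\sqrt \rho}\|\mu\|_{L^2(\Omega)} \cdot \Big(\tfrac{1}{\rho}\|\mu_\rho^\alpha\|_{L^2(\Omega)}^2\Big)^{1/2} \longrightarrow 0,
\]
which finishes the proof. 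The only genuinely delicate point is the sign of $P(y^\alpha, \rho, \mu)$; once that is secured from the pointwise inequality above, everything else is a routine combination of Lemma \ref{lemma:convrhounbounded} with the two algebraic identities.
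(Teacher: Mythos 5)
Your argument is correct and complete: the two algebraic identities, the sign observation $P(y^\alpha,\rho,\mu)\le 0$ (valid since $y^\alpha\le\psi$ and $\mu\ge 0$), the comparison $J_\rho^\alpha(y_\rho^\alpha,u_\rho^\alpha,\mu)\le J_\rho^\alpha(y^\alpha,u^\alpha,\mu)$ with the admissible competitor $u^\alpha\in\Uad$, and the strong convergence from Lemma \ref{lemma:convrhounbounded} to kill $\Delta_\rho$ together give $\rho^{-1}\|\mu_\rho^\alpha\|_{L^2(\Omega)}^2\to 0$ and hence the claim. This is essentially the same route as the proof the paper delegates to \cite[Lemma 3.7]{KarlWachsmuth2017_PREPRINT}, with the additional $L^1$-term handled exactly as the paper indicates (it passes to the limit under the $L^2$-convergence of the controls), so nothing further is needed.
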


\noindent If we now combine these two results we can show that our algorithm produces infinitely many successful steps. This will be crucial in the convergence analysis in the next section.
\begin{lemma}
The augmented Lagrange algorithm makes infinitely many successful steps.
\label{lemma:step3ainfinitely}
\end{lemma}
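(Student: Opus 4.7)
The plan is to argue by contradiction, reducing the question to the shifted penalty limit $\rho \to \infty$ with frozen regularization $\alpha$ and frozen shift $\mu$, a regime in which Lemmas \ref{lemma:convrhounbounded} and \ref{lemma:convmuinL2} were precisely designed to show that both contributions to $R_k$ vanish.

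Concretely, I would assume that only finitely many successful steps occur and denote by $k_0$ the index of the last one (taking $k_0 = 0$ and $R_{k_0}^+ := R_0^+$ if no successful step ever occurs). From iteration $k_0 + 1$ onwards only step~5 is ever executed, so the multiplier $\mu_k \equiv \mu_{k_0+1} =: \mu$ and the regularization parameter $\alpha_k \equiv \alpha_{k_0+1} =: \alpha$ remain fixed while the penalty parameter undergoes the geometric update $\rho_{k+1} = \theta\,\rho_k$ with $\theta > 1$, so $\rho_k \to \infty$.

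The next step is to exploit the two convergence results of the current subsection. With the frozen data above, the triple $(\bu_k, \by_k, \bmu_k)$ for $k > k_0$ is exactly the solution of \eqref{prob_auglag} indexed by $\rho = \rho_k$. Lemma \ref{lemma:convrhounbounded} then gives $\by_k \to \ya$ in $\CO$, where $\ya \le \psi$ is the feasible optimal state of \eqref{eq:RegProb} associated with the fixed $\alpha$; consequently $\|(\by_k - \psi)_+\|_{\CO} \to \|(\ya - \psi)_+\|_{\CO} = 0$. Lemma \ref{lemma:convmuinL2} simultaneously yields $(\bmu_k, \psi - \by_k) \to 0$, so combining the two contributions gives
$$R_k = \frac{1}{\alpha}\Big(\|(\by_k - \psi)_+\|_{\CO} + |(\bmu_k, \psi - \by_k)|\Big) \longrightarrow 0.$$

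Finally, since $R_{k_0}^+ > 0$ is a fixed positive number and $\tau \in (0,1)$, the decay $R_k \to 0$ forces $R_k \le \tau R_{k_0}^+$ for some $k > k_0$, meaning step $k$ would be declared successful --- contradicting the choice of $k_0$. The only delicate point I expect is the degenerate case $R_{k_0}^+ = 0$, which would signal that the last successful iterate already satisfies feasibility and complementarity exactly; this can be excluded by the stopping criterion and therefore poses no real obstacle.
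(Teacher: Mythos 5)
Your proposal is correct and follows essentially the same route as the paper: argue by contradiction, observe that after the last successful step both $\mu_k$ and $\alpha_k$ are frozen while $\rho_k\to\infty$, and then invoke Lemmas \ref{lemma:convrhounbounded} and \ref{lemma:convmuinL2} to force $R_k\to 0$, contradicting $R_k>\tau R^+_{n-1}>0$. Your explicit remark on the degenerate case $R^+_{k_0}=0$ is a small point the paper passes over silently, but it does not change the substance of the argument.
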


\begin{proof}
We assume that the algorithm produces only finitely many successful steps. Then there is an index $m$ such that all steps $k > m$ are not successful. Due to the definition of the algorithm we obtain $\bmu_k = \bmu_m$ for all $k> m$ and $R_k > \tau R_m > 0$ as well as $\rho_k \to \infty$. This now yields a contradiction as with Lemma \ref{lemma:convrhounbounded} and \ref{lemma:convmuinL2} we obtain
$$0 < \lim\limits_{k \to \infty} R_k = \lim\limits_{k \to \infty} \frac{1}{\alpha_k} \big( \norm{(\by_k-\psi)_+}_{C(\bar{\Omega})} + |( \bmu_k,\psi-\by_k)| \big) = 0$$
Please note that $\alpha_k$ is constant for $k > m$ since its value is only decreased in a successful step.
\end{proof}

\section{Convergence Results}\label{sec:convres}
In this section we want to show convergence of Algorithm \ref{alg_detail}. Let us recall that the sequence $(u_n^+,y_n^+,p_n^+)$ denotes the solution of the $n$-th successful iteration of Algorithm \ref{alg_detail} with $\mu_n^+$ being the corresponding approximation of the Lagrange multiplier.
We start with proving $L^1$-boundedness of the Lagrange multipliers $\mu_n^+$, which is accomplished in Lemma \ref{lemma:L1boundedness} below. To prove this result we need an auxiliary estimation first.
\begin{lemma}
\label{lemma:scaprod}
Let $y_n^+,\mu_n^+$ be given as defined in Algorithm \ref{alg_detail}. Then it holds
\begin{align}
\frac{1}{\alpha_n}|( \mu_n^+,\psi-y_n^+)|\leq \frac{\tau^{n-1}}{\alpha_1}\left(\norm{(y_1^+-\psi)_+}_{C(\bar{\Omega})}+\norm{\mu_1^+}_{L^2(\Omega)}\norm{(\psi-y_1^+)_+}_{L^2(\Omega)}\right).
\label{eq:est_scaprod}
\end{align}
\end{lemma}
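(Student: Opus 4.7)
The plan is to unwind the acceptance criterion in step 4 of Algorithm \ref{alg_detail} by induction on the index $n$ of successful steps, and then to estimate the resulting initial residual $R_1^+$ using the non-negativity of $\mu_1^+$ together with Cauchy--Schwarz.

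First I will set up the geometric contraction. By construction, the $n$-th successful step of Algorithm \ref{alg_detail} (occurring at some iteration $k$) satisfies $R_k\leq\tau R_{n-1}^+$, and we set $R_n^+:=R_k$. Thus $R_n^+\leq\tau R_{n-1}^+$ for every $n\geq 2$, and a straightforward iteration gives $R_n^+\leq\tau^{n-1}R_1^+$. Since $R_n^+$ is defined as the sum of the two non-negative terms $\frac{1}{\alpha_n}\|(y_n^+-\psi)_+\|_{C(\bar\Omega)}$ and $\frac{1}{\alpha_n}|(\mu_n^+,\psi-y_n^+)|$, dropping the first summand already yields
\[
\frac{1}{\alpha_n}|(\mu_n^+,\psi-y_n^+)|\leq R_n^+\leq\tau^{n-1}R_1^+.
\]

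It remains to bound $R_1^+$ by the right-hand side of the claim. By its definition in step 2 of the algorithm, $\mu_1^+\geq 0$ pointwise. Combined with the trivial pointwise estimate $\psi-y_1^+\leq(\psi-y_1^+)_+$, this gives
\[
(\mu_1^+,\psi-y_1^+)\leq(\mu_1^+,(\psi-y_1^+)_+)\leq\|\mu_1^+\|_{L^2(\Omega)}\|(\psi-y_1^+)_+\|_{L^2(\Omega)}
\]
by Cauchy--Schwarz. Inserting this estimate together with the obvious bound on the other summand into the definition of $R_1^+$ produces precisely the expression appearing in the lemma.

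The main obstacle I anticipate is controlling $|(\mu_1^+,\psi-y_1^+)|$ in the case where the inner product is negative, since only the positive branch is covered by the estimate above. I expect this to be handled either by exploiting the explicit structure $\mu_1^+=(\mu_1+\rho_1(y_1^+-\psi))_+$ on the set $\{y_1^+>\psi\}$, which allows the product $\mu_1^+(y_1^+-\psi)$ to be estimated directly, or by using the uniform $L^1$-bound on $\mu_1^+$ from Lemma \ref{lem:reg_mult_bounded} to absorb $(\mu_1^+,(y_1^+-\psi)_+)$ into the $\|(y_1^+-\psi)_+\|_{C(\bar\Omega)}$-summand of $R_1^+$ after an innocuous adjustment of constants. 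Once this branch is settled, combining the resulting bound on $R_1^+$ with the geometric contraction established above delivers the claim.
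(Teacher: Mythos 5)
Your proposal follows essentially the same route as the paper: its proof unrolls the successful-step criterion $R_n^+\leq\tau R_{n-1}^+$ by induction, discarding the nonnegative $\tfrac{1}{\alpha_j}\norm{(y_j^+-\psi)_+}_{C(\bar\Omega)}$ terms along the way to arrive at $\tau^{n-1}R_1^+$, and then finishes with exactly your Cauchy--Schwarz bound on $(\mu_1^+,\psi-y_1^+)$, labelled there only as ``a standard estimate''. The sign obstacle you flag is present in the paper's argument as well and is harmless for the way the lemma is used downstream: since $n=1$ is a fixed index, the negative branch $(\mu_1^+,(y_1^+-\psi)_+)$ is a fixed constant that can be absorbed as you suggest, and no uniform multiplier bound is required for this (in particular Lemma \ref{lem:reg_mult_bounded}, which concerns the regularized problem rather than the augmented Lagrange iterates, is not the right reference here).
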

\begin{proof}
Using the definition for a successful step we obtain:
 \begin{align*}
 \frac{1}{\alpha_n} |( {\mu_n^+},\psi-y_n^+)|&  
 \leq \frac{\tau}{\alpha_{n-1}}\left( \norm{(y_{n-1}^+-\psi)_+}_{C(\bar{\Omega})} + |({\mu}_{n-1}^+,\psi-y_{n-1}^+)|\right) - \frac{1}{\alpha_n} \norm{(y_{n}^+-\psi)_+}_{\CO} \\
 & \leq \frac{\tau}{\alpha_{n-1}} \norm{(y_{n-1}^+-\psi)_+}_{C(\bar{\Omega})} + \tau \left( \frac{1}{\alpha_{n-1}}|({\mu}_{n-1}^+,\psi-y_{n-1}^+)|  \right)\\
 &\leq \frac{\tau}{\alpha_{n-1}} \norm{(y_{n-1}^+-\psi)_+}_{C(\bar{\Omega})}\\
 &\quad + \tau \bigg(  \frac{\tau}{\alpha_{n-2}}\left( \norm{(y_{n-2}^+-\psi)_+}_{C(\bar{\Omega})} + |({\mu}_{n-2}^+,\psi-y_{n-2}^+)|\right)\\
 &\quad- \frac{1}{\alpha_{n-1}} \norm{(y_{n-1}^+-\psi)_+}_{\CO}  \bigg)\\
 &\leq  \frac{\tau^2 }{\alpha_{n-2}}\left( \norm{(y_{n-2}^+-\psi)_+}_{C(\bar{\Omega})} + |({\mu}_{n-2}^+,\psi-y_{n-2}^+)|\right)
 \end{align*}
 
 The rest now follows by induction and a standard estimate.
 
\end{proof}

\noindent We want to point out that the right hand side of \eqref{eq:est_scaprod} goes to $0$ as $n \to \infty$. This will be crucial in the following convergence analysis and is a result of our update rule. Let us now show the $L^1$-boundedness of the Lagrange multipliers $(\mu_n^+)$.
\begin{lemma}[Boundedness of the Lagrange multiplier]
Let Assumption \ref{ass:slater} be fulfilled.
Then Algorithm \ref{alg_detail} generates an infinite sequence of bounded iterates, i.e.,
there is a constant $C>0$ such that for all $n$ it holds
\[\norm{y_n^+}_{H^1(\Omega)}+\norm{y_n^+}_{\CO}+ \norm{u_n^+}_{L^2(\Omega)}+\norm{p_n^+}_{W^{1,s}(\Omega)}+ \norm{\mu_n^+}_{L^1(\Omega)}\leq C.\]
\label{lemma:L1boundedness}
\end{lemma}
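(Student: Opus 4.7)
The plan is to reduce everything to a uniform bound on $\|\mu_n^+\|_{L^1(\Omega)}$, after which Theorem \ref{theo:bounded-yup} immediately delivers the remaining four norms. Lemma \ref{lemma:step3ainfinitely} guarantees that the sequence $(u_n^+,y_n^+,p_n^+,\mu_n^+)$ is well-defined for every $n$, and by construction $\mu_n^+=(\mu_k+\rho_k(\by_k-\psi))_+\geq 0$. So I would start from the Slater point $\hat u\in\Uad$ with $\hat y=S\hat u\leq \psi-\sigma$ supplied by Assumption \ref{ass:slater} and integrate $\psi-\hat y\geq \sigma$ against $\mu_n^+$ to obtain
\begin{equation*}
\sigma\|\mu_n^+\|_{L^1(\Omega)} \leq (\mu_n^+,\psi-\hat y) = (\mu_n^+,\psi-y_n^+) + (\mu_n^+,y_n^+-\hat y).
\end{equation*}

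The crucial difference from the rigid complementarity situation of Lemma \ref{lem:reg_mult_bounded} is that $(\mu_n^+,\psi-y_n^+)=0$ does \emph{not} hold, since $\mu_n^+$ is only an augmented-Lagrange multiplier; this is precisely where the update rule pays off. Lemma \ref{lemma:scaprod} controls the first term directly via $|(\mu_n^+,\psi-y_n^+)|\leq \alpha_n C\leq \alpha_1 C$, uniformly in $n$. For the second term my plan is to test the identity $A(y_n^+-\hat y)=u_n^+-\hat u$ against $p_n^+$ and invoke the adjoint equation \eqref{eq_kkt_alg_2} to obtain
\begin{equation*}
(\mu_n^+,y_n^+-\hat y) = (p_n^+,u_n^+-\hat u) - (y_n^+-y_d,y_n^+-\hat y),
\end{equation*}
and then insert $u=\hat u$ into the variational inequality \eqref{eq_kkt_alg_3}, combined with $\lambda_n^+\in\partial j(u_n^+)$ and the defining inequality of the subdifferential, to arrive at
\begin{equation*}
(p_n^+,u_n^+-\hat u) \leq \alpha_n(u_n^+,\hat u-u_n^+) + \beta\bigl(\|\hat u\|_{L^1(\Omega)}-\|u_n^+\|_{L^1(\Omega)}\bigr).
\end{equation*}

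At this stage every remaining quantity is uniformly bounded: $u_n^+\in\Uad$ together with $u_a,u_b\in L^\infty(\Omega)$ gives a uniform $L^\infty$-bound on $u_n^+$ (hence on $\|u_n^+\|_{L^2}$ and $\|u_n^+\|_{L^1}\geq 0$); Theorem \ref{theo:exsol-stateeq} then bounds $y_n^+$ in $L^2(\Omega)\cap C(\bar\Omega)$; the Slater point $\hat u$ and its state $\hat y$ are fixed; and $\alpha_n\leq \alpha_1$. Collecting these bounds and dividing by $\sigma>0$ yields $\|\mu_n^+\|_{L^1(\Omega)}\leq C$ independently of $n$. A single application of Theorem \ref{theo:bounded-yup} then transfers this into the claimed bounds on $\|y_n^+\|_{H^1(\Omega)}$, $\|y_n^+\|_{\CO}$, $\|u_n^+\|_{L^2(\Omega)}$ and $\|p_n^+\|_{W^{1,s}(\Omega)}$. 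The genuine obstacle is the failure of exact complementarity for the augmented multiplier, which forces the detour through Lemma \ref{lemma:scaprod} and thereby exploits the structure of the update rule; in contrast, the non-smooth $L^1$-term is handled painlessly by the subdifferential inequality $(\lambda_n^+,\hat u-u_n^+)\leq j(\hat u)-j(u_n^+)$, which converts the extra term into the harmless expression $\beta(\|\hat u\|_{L^1}-\|u_n^+\|_{L^1})$.
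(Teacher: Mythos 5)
Your proposal is correct and follows essentially the same route as the paper: Slater point, the split $\sigma\|\mu_n^+\|_{L^1(\Omega)}\leq(\mu_n^+,\psi-y_n^+)+(\mu_n^+,y_n^+-\hat y)$, Lemma \ref{lemma:scaprod} for the first term, the adjoint equation plus the variational inequality \eqref{eq_kkt_alg_3} and the subdifferential inequality for the second, and finally Theorem \ref{theo:bounded-yup}. The only (harmless) deviation is that you bound $u_n^+$ a priori via $\Uad\subset L^\infty(\Omega)$, whereas the paper extracts the $L^2$-bound on $u_n^+$ from the combined estimate itself using Young's inequality; both are valid.
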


\begin{proof}
Let $(\hat{u},\hat{y})$ be the Slater point given by Assumption \ref{ass:slater}, i.e., there exists $\sigma>0$, such that $\hat{y}+\sigma\leq \psi$. Then we can estimate
\begin{align*}
\sigma||\mu_{n}^+||_{L^1(\Omega)}&=\int_{\Omega}\sigma\mu_{n}^+\dx\leq \int_{\Omega}\mu_{n}^+(\psi-\hat{y})\dx=\int_{\Omega}\mu_{n}^+(\psi-y_n^+ +y_n^+-\hat{y})\dx\\
&=\underbrace{\int_{\Omega}\mu_{n}^+(\psi-y_n^+)}_{(I)}\dx+\underbrace{\int_{\Omega}\mu_{n}^+(y_n^+-\hat{y})\dx}_{(II)}.
\end{align*}
The first part $(I)$ can be estimated with Lemma \ref{lemma:scaprod} yielding
\begin{align}
\begin{split}
(I) \le  |( \mu_n^+,\psi-y_n^+)|&\leq \frac{\alpha_n}{\alpha_{n-1}} \tau^{n-1}\left(\norm{(y_1^+-\psi)_+}_{C(\bar{\Omega})}+\norm{\mu_1^+}_{L^2(\Omega)}\norm{(\psi-y_1^+)_+}_{L^2(\Omega)}\right)\\
&\leq c \tau^{n-1} .
\end{split}
\label{proof:mu_bounded:eq2}
\end{align}
Please note that we used the monotonicity of $(\alpha_n)_n$. Before we estimate part $(II)$ recall that we have the inequality
$$(\lambda_n^+, u - u_n^+) \leq \|u\|_{L^1(\Omega)} - \|u_n^+\|_{L^1(\Omega)}$$
for every $u \in L^1(\Omega)$. By definition we obtain that $u \in \Uad$ implies $u \in L^\infty(\Omega)$. Now the second part $(II)$ can be estimated using Young's Inequality as follows
\begin{align}
\begin{split}
\int_\Omega&\mu_{n}^+(y_n^+-\hat{y})\dx =\la A^*p_n^+-(y_n^+-y_d),y_n^+-\hat{y}\ra \\
&=\la p_n^+,A(y_n^+-\hat{y})\ra-( y_n^+-y_d,y_n^+-\hat{y})\\
&=( p_n^+,u_n^+-\hat{u})-( y_n^+-y_d,y_n^+-\hat{y}) \\
&\leq - ( \alpha u_n^+,u_n^+-\hat{u})-( y_n^+-y_d,y_n^+-\hat{y}) - \beta (\lambda_n^+, u_n^+ - \hat u)\\
&\leq  ( \alpha u_n^+,\hat{u}-u_n^+)+( y_n^+-y_d,\hat{y}-y_n^+) + \beta( \|\hat u\|_{L^1(\Omega)} - \|u_n^+\|_{L^1(\Omega)} )\\
&= \alpha(u_n^+-\hat{u},\hat{u}-u_n^+)+\alpha(\hat{u},\hat{u}-u_n^+)+(y_n^+-\hat{y},\hat{y}-y_n^+)+(\hat{y}-y_d,\hat{y}-y_n^+)\\
&\quad + \beta( \|\hat u\|_{L^1(\Omega)} - \|u_n^+\|_{L^1(\Omega)} )\\
&\leq -\frac{\alpha}{2}\norm{\hat{u}-u_n^+}^2_{L^2(\Omega)} -\frac{1}{2}\norm{\hat{y}-y_n^+}^2_{L^2(\Omega)}+\frac{\alpha}{2}\norm{\hat{u}}^2_{L^2(\Omega)} +\frac{1}{2}\norm{\hat{y}-y_d}^2_{L^2(\Omega)}\\
&\quad +  \beta( \|\hat u\|_{L^1(\Omega)} - \|u_n^+\|_{L^1(\Omega)} ).
\end{split}
\label{proof:mu_bounded:eq1}
\end{align}
Putting \eqref{proof:mu_bounded:eq2} and \eqref{proof:mu_bounded:eq1} together yields
\begin{align*}
\begin{split}
\norm{\mu_n^+}_{L^1(\Omega)}&+\frac{\alpha}{2}\norm{\hat{u}-u_n^+}^2_{L^2(\Omega)}  +\frac{1}{2}\norm{\hat{y}-y_n^+}^2_{L^2(\Omega)} \\
&\leq \frac{\tau^{n-1}}{\sigma}C +\frac{\alpha}{2}\norm{\hat{u}}_{L^2(\Omega)}^2+\beta\|\hat u\|_{L^1(\Omega)}+\frac{1}{2}\norm{\hat{y}-y_d}_{L^2(\Omega)}^2.
\end{split}
\end{align*}
Since $\tau\in(0,1)$ by assumption, the right-hand side is bounded. Consequently we get boundedness of $(u_n^+)$ in $L^2(\Omega)$
and boundedness of $(\mu_n^+)$ in $L^1(\Omega)$.
By the regularity result Theorem \ref{theo:exsol-stateeq},
the sequence $(y_n^+)$ is uniformly bounded in $\Y$.
Boundedness of $(p_n^+)$ follows directly from Theorem \ref{theo:bounded-yup}.
\end{proof}

\begin{theorem}[Convergence of solutions]
\label{theo:convergence}
As $n\rightarrow\infty $ we have for the sequence $( u_n^+,y_n^+ )$ generated by Algorithm \ref{alg_detail}
\begin{align*}
(u_n^+,y_n^+)&\rightarrow (\bar u, \bar y),\ in\ \Lp{2} \times (\Y).
\end{align*}
where $\bar u$ denotes the unique minimum norm solution of problem \eqref{eq:optcontprob}.
\end{theorem}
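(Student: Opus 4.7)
The plan rests on the error-splitting outlined in the introduction,
\[
\|u_n^+ - \bar u\|_{L^2(\Omega)} \leq \|u_n^+ - u^{\alpha_n}\|_{L^2(\Omega)} + \|u^{\alpha_n} - \bar u\|_{L^2(\Omega)},
\]
where $u^{\alpha_n}$ denotes the unique solution of the Tikhonov-regularized problem \eqref{eq:RegProb} with parameter $\alpha_n$ in force at the $n$-th successful iterate. The first summand is the Lagrange error, which I intend to control through the built-in update rule; the second is the Tikhonov error, handled by Lemma \ref{lem:conv_alpha_tech}.

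First I would observe that $\alpha_n\to 0$ geometrically. By Lemma \ref{lemma:step3ainfinitely} Algorithm \ref{alg_detail} performs infinitely many successful steps, and at each success $\alpha$ is multiplied by $\omega\in(0,1)$ while remaining constant on unsuccessful steps, so $\alpha_n = \omega^{n-1}\alpha_1\to 0$. Invoking Lemma \ref{lem:conv_alpha_tech} then yields $\|u^{\alpha_n}-\bar u\|_{L^2(\Omega)}\to 0$, disposing of the Tikhonov error.

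For the Lagrange error, I would apply Lemma \ref{lemma:error_yu} to the $n$-th successful iterate, giving
\[
\|u^{\alpha_n} - u_n^+\|_{L^2(\Omega)}^2 \leq c\, R_n^+,
\]
where the constant $c$ is independent of $n$ thanks to the uniform boundedness of the regularized multipliers $\mu^{\alpha_n}$ in $\MO$ (Lemma \ref{lem:reg_mult_bounded}, which in turn relies on the Slater condition). The update rule in Algorithm \ref{alg_detail} enforces $R_n^+\le \tau R_{n-1}^+$ for every successful step, hence inductively $R_n^+\le \tau^{n} R_0^+$, so $R_n^+\to 0$ geometrically and the Lagrange error vanishes. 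Combining the two bounds yields $u_n^+\to \bar u$ in $L^2(\Omega)$. State convergence then follows at once: since $y_n^+-\bar y = S(u_n^+-\bar u)$ with $S$ continuous from $L^2(\Omega)$ into $H_0^1(\Omega)\cap C(\bar\Omega)$ by Theorem \ref{theo:exsol-stateeq}, one has
\[
\|y_n^+-\bar y\|_{H_0^1(\Omega)} + \|y_n^+-\bar y\|_{C(\bar\Omega)} \leq c\,\|u_n^+-\bar u\|_{L^2(\Omega)} \to 0.
\]

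The main delicate point is the interplay between the two decreasing quantities $\alpha_n$ and the residual in Lemma \ref{lemma:error_yu}: a priori one might fear that the factor $\alpha_n^{-1}$ on the right-hand side of that lemma swamps the decay of the feasibility-and-complementarity terms. The conceptual key is that Algorithm \ref{alg_detail} is engineered to control the aggregated quantity $R_n^+$, in which the $\alpha_n^{-1}$ scaling is already built in; the Slater assumption, via Lemma \ref{lem:reg_mult_bounded}, makes the constant in the Lagrange-error estimate independent of $\alpha$, which is exactly what closes the argument and prevents any $\alpha$-dependent blow-up.
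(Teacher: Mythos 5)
Your proposal is correct and follows essentially the same route as the paper's proof: the same splitting into Tikhonov and Lagrange errors, Lemma \ref{lem:conv_alpha_tech} for the former, Lemma \ref{lemma:erroryubounded1}/\ref{lemma:error_yu} combined with the uniform multiplier bound of Lemma \ref{lem:reg_mult_bounded} for the latter, and Lemma \ref{lemma:step3ainfinitely} to guarantee $\alpha_n\to 0$ and $R_n^+\to 0$. Your explicit remark that the update rule forces $R_n^+\le\tau R_{n-1}^+$, so that the $\alpha_n^{-1}$ factor is already absorbed into the controlled quantity, is exactly the point the paper relies on (it states $\lim_n R_n^+=0$ without spelling out the geometric decay).
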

\begin{proof}
Since the algorithm yields an infinite number of successful steps (Lemma \ref{lemma:step3ainfinitely}) we get
\begin{align}
\lim_{n\rightarrow\infty}R_n^+=\lim_{n\rightarrow\infty}&\frac{1}{\alpha_n}\left(\norm{(y_n^+-\psi)_+}_{\CO}+ |(\mu_n^+,\psi-y_n^+)|\right) = 0.
\label{eq:lim-Rk1}
\end{align}
with $\alpha_n\rightarrow 0$. Let $(u^{\alpha_n},y^{\alpha_n},p^{\alpha_n},\mu^{\alpha_n})$ be a solution of \eqref{eq:kktreg_o} for $\alpha:=\alpha_n$ then we obtain from Lemma \ref{lemma:erroryubounded1} the following inequality
\begin{align*}
\begin{split}
\frac{1}{\alpha_n}\norm{\yan-y_n^+}_{\Lp{2}}^2+\norm{\uan-u_n^+}_{\Lp{2}}^2  &\leq \frac{1}{\alpha_n}\left(\la \muan,y_n^+-\psi\ra +|(\mu_n^+,\psi-y_n^+)|\right)\\
&\leq\frac{1}{\alpha_n}\left(  \norm{\muan}_{\MO}\norm{(y_n^+-\psi)_+}_{\CO}+|(\mu_n^+,\psi-y_n^+)|\right)\\
&\leq\frac{c}{\alpha_n}\left( \norm{(y_n^+-\psi)_+}_{\CO}+|(\mu_n^+,\psi-y_n^+)|\right).
\end{split}
\end{align*}
Note that in the last step we used Lemma \ref{lem:reg_mult_bounded}. With \eqref{eq:lim-Rk1} from above, we conclude
\begin{equation}\label{eq:tech_3}
\lim_{n\rightarrow\infty}\frac{1}{\alpha_n}\norm{\yan-y_n^+}^2_{\Lp{2}}+\norm{\uan-u_n^+}^2_{\Lp{2}}= 0.
\end{equation}
We now split the error as described in the introduction
\[
\|u_n^+ - \bu\|_{L^2(\Omega)} \leq \underbrace{\|u_n^+ - \uan\|_{L^2(\Omega)}}_{\text{Lagrange error (I)}}  + \underbrace{\|\uan - \bu\|_{L^2(\Omega)}}_{\text{Tikhonov error (II)}} .
\]
Using \eqref{eq:tech_3} we obtain $(I) \to 0$.  Now we use the fact that our algorithm creates infinitely many successful steps, which gives $\alpha_n \to 0$ as $n \to \infty$. We therefore conclude that  $ \uan \to \bu$, see Lemma \ref{lem:conv_alpha_tech}. Hence hence $(II) \to 0$. So in total we obtain $u_n^+ \to \bar u$ in $L^2(\Omega)$. Convergence of $y_n^*\rightarrow\by$ follows from Theorem \ref{theo:exsol-stateeq} which finishes the proof.
\end{proof}

\begin{corollary}
For the sequence $(y_n^+)$ generated by Algorithm \ref{alg_detail} we obtain
$$\frac{1}{\alpha_n} \|y_n^+ - \bar y\|_{L^2(\Omega)} ^2 \to 0$$
which is similar to the results obtained for a Tikhonov regularization without state constraints, see \cite{wachsmuth2011c} and Lemma \ref{lem:conv_alpha_tech}.
\end{corollary}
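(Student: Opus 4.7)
The plan is to split the error via a Tikhonov/Lagrange decomposition analogous to the one already used for the controls in the proof of Theorem \ref{theo:convergence}. Concretely, I would introduce the regularized state $y^{\alpha_n}$ associated with the solution $u^{\alpha_n}$ of \eqref{eq:RegProb} for $\alpha = \alpha_n$, and write
\[
\|y_n^+ - \bar y\|_{L^2(\Omega)}^2 \leq 2\|y_n^+ - y^{\alpha_n}\|_{L^2(\Omega)}^2 + 2\|y^{\alpha_n} - \bar y\|_{L^2(\Omega)}^2,
\]
using the elementary inequality $(a+b)^2 \leq 2a^2 + 2b^2$. Dividing by $\alpha_n$ reduces the claim to showing that each of the two summands, scaled by $1/\alpha_n$, vanishes in the limit.

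For the first (Lagrange) part, the required limit $\frac{1}{\alpha_n}\|y_n^+ - y^{\alpha_n}\|_{L^2(\Omega)}^2 \to 0$ is precisely the intermediate estimate \eqref{eq:tech_3} that was already established in the proof of Theorem \ref{theo:convergence}: it follows from Lemma \ref{lemma:erroryubounded1}, the uniform multiplier bound of Lemma \ref{lem:reg_mult_bounded}, and the fact that $R_n^+ \to 0$ along the successful steps. For the second (Tikhonov) part, since Algorithm \ref{alg_detail} generates infinitely many successful steps (Lemma \ref{lemma:step3ainfinitely}) and the regularization parameter is decreased only on such steps, one has $\alpha_n \to 0$ as $n \to \infty$. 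Hence Lemma \ref{lem:conv_alpha_tech} directly yields $\frac{1}{\alpha_n}\|y^{\alpha_n} - \bar y\|_{L^2(\Omega)}^2 \to 0$.

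Combining the two convergences gives $\frac{1}{\alpha_n}\|y_n^+ - \bar y\|_{L^2(\Omega)}^2 \to 0$, which is the claim. There is no real obstacle: the work was done in Theorem \ref{theo:convergence} and Lemma \ref{lem:conv_alpha_tech}, and the corollary is essentially a bookkeeping exercise ensuring that the two rates, each individually $o(\alpha_n^{1/2})$ in the $L^2$-norm, combine via the triangle inequality without losing the scaling.
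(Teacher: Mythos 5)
Your proposal is correct and follows exactly the paper's own argument: the same decomposition of $\frac{1}{\alpha_n}\|y_n^+ - \bar y\|_{L^2(\Omega)}^2$ into the Lagrange part controlled by \eqref{eq:tech_3} and the Tikhonov part controlled by Lemma \ref{lem:conv_alpha_tech}, with only a cosmetic difference in the constant ($2$ versus the paper's generic $c$). Nothing further is needed.
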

\begin{proof}
We split the error to obtain with some $c > 0$ independent from $n$:
\[
\frac{1}{\alpha_n} \|y_n^+ - \bar y\|_{L^2(\Omega)} ^2 \leq \frac{c}{\alpha_n} \|y_n^+ -  \yan\|_{L^2(\Omega)} ^2 + \frac{c}{\alpha_n} \|\yan - \bar y\|_{L^2(\Omega)} ^2.
\]
The result is now an immediate consequence of \eqref{eq:tech_3} and Lemma \ref{lem:conv_alpha_tech}.
\end{proof}

\noindent Let us assume that the adjoint state $\bp$ and the multiplier corresponding to the state constraint $\bmu$ are unique then following \cite[Theorem 3.12, Corollary 3.12]{KarlWachsmuth2017_PREPRINT} we get the following convergence result.
\begin{theorem}\label{theo:convmult}
Let $(\bu,\by,\bp,\bu)$ satisfy the KKT-system \eqref{eq:kkt_o}. Let us assume that $(\bp,\bmu)$ are uniquely given. Then it holds
\begin{align*}
p_n^+ &\rightharpoonup \bp \qquad\text{in} \ L^2(\Omega),\\
\mu_n^+ &\overset{*}\rightharpoonup \bmu \qquad\text{in}\  \MO.
\end{align*}
\end{theorem}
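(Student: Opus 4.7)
The plan is to extract weak/weak*-convergent subsequences of $(p_n^+)$ and $(\mu_n^+)$, pass to the limit in the KKT system \eqref{eq_kkt_alg} for the successful iterates, and use the postulated uniqueness of $(\bar p, \bar \mu)$ together with a standard subsequence argument to upgrade convergence to the whole sequence.

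First, Lemma \ref{lemma:L1boundedness} provides uniform bounds for $(p_n^+)$ in $W^{1,s}(\Omega) \hookrightarrow L^2(\Omega)$ and for $(\mu_n^+)$ in $L^1(\Omega) \hookrightarrow \MO$. After extracting a (not relabeled) subsequence, we may assume $p_n^+ \rightharpoonup \tilde p$ in $L^2(\Omega)$ and $\mu_n^+ \overset{*}\rightharpoonup \tilde \mu$ in $\MO$. The associated subgradients $\lambda_n^+ \in \partial j(u_n^+)$ satisfy $|\lambda_n^+(x)| \leq 1$ pointwise, so a further extraction yields $\lambda_n^+ \overset{*}\rightharpoonup \tilde \lambda$ in $L^\infty(\Omega)$. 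Meanwhile, Theorem \ref{theo:convergence} supplies $u_n^+ \to \bar u$ in $L^2(\Omega)$ and $y_n^+ \to \bar y$ in $H_0^1(\Omega) \cap \CO$.

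Next I would pass to the limit in \eqref{eq_kkt_alg}. The state equation trivially yields $A \bar y = \bar u$. For the adjoint equation \eqref{eq_kkt_alg_2}, testing in the very weak sense against smooth functions and combining the weak convergence of $p_n^+$, the strong convergence of $y_n^+$, and the weak*-convergence of $\mu_n^+$ against $\CO$-valued test functions gives $A^* \tilde p = \bar y - y_d + \tilde \mu$. For the subgradient, I would pass to the limit in
\[
\int_\Omega \lambda_n^+ (v - u_n^+) \dx \leq \|v\|_{L^1(\Omega)} - \|u_n^+\|_{L^1(\Omega)}, \qquad v \in L^1(\Omega),
\]
where the $L^\infty$-weak* convergence of $\lambda_n^+$ pairs with $v \in L^1(\Omega)$, and the strong $L^2$- (hence $L^1$-) convergence of $u_n^+$ together with the $L^\infty$-bound on $\lambda_n^+$ allows passage in the product term and on the right-hand side. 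This yields $\tilde \lambda \in \partial j(\bar u)$. Since $\alpha_n u_n^+ \to 0$ in $L^2(\Omega)$, the variational inequality \eqref{eq_kkt_alg_3} becomes $(\tilde p + \beta \tilde \lambda, u - \bar u) \geq 0$ for all $u \in \Uad$.

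It remains to pass to the sign and complementarity conditions. Nonnegativity of $\mu_n^+$ as an $L^2$-function combined with the weak*-passage gives $\langle \tilde \mu, \varphi \rangle_{\MO, \CO} \geq 0$ for every $\varphi \in \CO$ with $\varphi \geq 0$. For complementarity I would split
\[
\langle \mu_n^+, \bar y - \psi \rangle_{\MO, \CO} = \langle \mu_n^+, \bar y - y_n^+ \rangle_{\MO, \CO} + (\mu_n^+, y_n^+ - \psi),
\]
where the first term vanishes by the weak*--strong pairing on $\MO \times \CO$ (using $y_n^+ \to \bar y$ in $\CO$ and boundedness of $\mu_n^+$ in $\MO$) and the second vanishes because $R_n^+ \to 0$ forces $|(\mu_n^+, \psi - y_n^+)| \to 0$ (even after multiplication by $\alpha_n^{-1}$). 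Passing the left-hand side to the limit through $\mu_n^+ \overset{*}\rightharpoonup \tilde \mu$ delivers $\langle \tilde \mu, \bar y - \psi \rangle_{\MO, \CO} = 0$. Consequently $(\bar u, \bar y, \tilde p, \tilde \mu, \tilde \lambda)$ satisfies \eqref{eq:kkt_o}, and the assumed uniqueness of $(\bar p, \bar \mu)$ forces $\tilde p = \bar p$, $\tilde \mu = \bar \mu$. Since every weak/weak*-accumulation point of $(p_n^+, \mu_n^+)$ equals this common value, the full sequences converge.

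The main obstacle I anticipate is ensuring the correct pairing of topologies throughout: in particular, identifying $\tilde \lambda \in \partial j(\bar u)$ requires the interplay between $L^\infty$-weak* convergence of $\lambda_n^+$ and strong $L^1$-convergence of $u_n^+$, and the passage to the complementarity relation leans essentially on the decay $R_n^+ \to 0$ built into the update rule, which is exactly what makes both $\|(y_n^+-\psi)_+\|_{\CO}$ and $|(\mu_n^+, \psi - y_n^+)|$ vanish individually.
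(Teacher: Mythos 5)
Your proposal is correct. The paper gives no in-text proof of this theorem; it simply defers to [Theorem 3.12, Corollary 3.12] of the cited companion work, and the argument there is exactly the limit-passage you describe: boundedness from Lemma \ref{lemma:L1boundedness}, extraction of weak/weak*-convergent subsequences, identification of the limit as a KKT point via the strong convergence of $(u_n^+, y_n^+)$ and the decay of $R_n^+$, and the uniqueness assumption to upgrade to the full sequence. The only genuinely new ingredient relative to that reference is the $L^1$-term, and you handle it properly: the uniform bound $|\lambda_n^+|\le 1$, weak*-convergence in $L^\infty(\Omega)$ paired against the strong $L^1$-convergence of $u_n^+$, and the subdifferential inequality passed to the limit to get $\tilde\lambda\in\partial j(\bar u)$ --- which is precisely the adaptation the authors implicitly claim works.
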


\section{Numerical Method in Detail}\label{sec:alg_detail}
All optimal control problems have been solved using the above stated augmented Lagrange algorithm implemented with FEniCS \cite{LoggMardalEtAl2012a} using the DOLFIN \cite{ LoggWells2010a} Python interface. The arising sub-problems \eqref{prob_auglag} have been solved combining two methods. The first method is the active set method presented by Stadler \cite{Stadler2009}, where optimal control problems of type \eqref{prob_auglag} have been solved, but without augmented state constraints. The second is the method established by Ito et al. \cite{ito2003semi-smooth} that presented an active set method for optimal control problems with state constraints but without a $L^1$-cost term.\\

\noindent Like in \cite{Stadler2009}  we set for \eqref{eq_kkt_alg}
$$\bar\xi_k = \lambda_k^\alpha -\lambda^a_k + \lambda^b_k,$$
where $\lambda_k^\alpha$ denotes the subdifferential of $\beta\norm{\bu_k}_{L^1(\Omega)}$, $\lambda_k^a$ the multiplier to the lower control constraints $u_a-\bu_k\leq 0$ and $\lambda_k^b$ the multiplier corresponding to the upper control constraint $\bu_k-u_b\leq 0$.
Then \eqref{eq:kkt_auglag:3} can be written as
$$ \bp_k + \alpha \bu_k+\bar\xi_k = 0.$$
Defining the following active sets, see also Lemma \ref{lem:opt_rel_a_not0}:
\begin{align*}
\mathcal{Y}_-^k &=  \lbrace x\in \Omega:\  (\mu+\rho(y_{k+1}-\psi))>0\rbrace,\\
\mathcal{Y}_+^k &=  \Omega\setminus \mathcal{Y}_-^k ,\\
\mathcal{A}_a^k &=  \lbrace x\in \Omega:\  p_k > \beta - \alpha u_a \rbrace,\\
\mathcal{A}_0^k &=  \lbrace x\in \Omega:\  |p_k|<\beta\rbrace,\\
\mathcal{A}_b^k &=  \lbrace x\in \Omega:\  p_k < -\alpha u_b - \beta\rbrace,\\
\mathcal{I}_-^k &=  \lbrace x\in \Omega:\  \beta \leq p_k \leq \beta - \alpha u_a\rbrace,\\
\mathcal{I}_+^k &=  \lbrace x\in \Omega:\  - \alpha u_b - \beta \leq p_k \leq -\beta \rbrace,\\
\end{align*}
The resulting sub-problem of the augmented Lagrange method can now be solved by the following algorithm.
\begin{algorithm}[H]
\textbf{Active set method (inner iteration)} 
\begin{enumerate}
\item Choose initial data $u_0, p_0$ and parameters $\alpha, \rho$, compute the active sets $\mathcal{Y}_-^0$, $\mathcal{Y}_+^0$, $\mathcal{A}_a^0$, $\mathcal{A}_0^0$, $\mathcal{A}_b^0$, $\mathcal{I}_-^0, \mathcal{I}_+^0$ and set $k := 0$.
\item Solve for $(u_{k+1},y_{k+1}, p_{k+1}, \xi_{k+1})$ satisfying
\begin{align}\label{eq:kktactset}
\begin{split}
Ay_{k+1} -u_{k+1} -f  &=0 ,\\
-A^*p_{k+1} + y_{k+1}-y_d + \mu_{k+1} &= 0,\\
p_{k+1}+\alpha u_{k+1} +\xi_{k+1} &= 0,
\end{split}
\end{align}
\begin{align}\label{eq:funcOnActSet}
\begin{split}
u_{k+1} = \begin{cases}
u_a \quad &\text{on } \mathcal{A}_a^k,\\
0 \quad &\text{on } \mathcal{A}_0^k,\\
u_b \quad &\text{on } \mathcal{A}_b^k,
\end{cases}
\qquad
\xi_{k+1}=\begin{cases}
-\beta\quad &\text{on } \mathcal{I}_-^k,\\
\beta \quad &\text{on } \mathcal{I}_+^k,\\
\end{cases}
\qquad
\mu_{k+1} &=
\begin{cases}
0 \quad &\text{on } \mathcal{Y}_-^k,\\
 \mu+\rho(y_{k+1}-\psi) \quad &\text{on } \mathcal{Y}_+^k.\\
\end{cases}
\end{split}
\end{align}
\item Compute the active sets $\mathcal{Y}_-^{k+1}, \mathcal{Y}_+^{k+1}, \mathcal{A}_a^{k+1}, \mathcal{A}_0^{k+1}, \mathcal{A}_b^{k+1}, \mathcal{I}_-^{k+1}, \mathcal{I}_+^{k+1}$
\item If the following equalities hold: $\mathcal{A}_a^{k+1} =\mathcal{A}_a^k$, $\mathcal{A}_0^{k+1} =\mathcal{A}_0^k$, $\mathcal{A}_b^{k+1} =\mathcal{A}_b^k$, $\mathcal{I}_-^{k+1}=\mathcal{I}_-^k$, $\mathcal{I}_+^{k+1}=\mathcal{I}_+^k$, $\mathcal{Y}_-^{k+1}=\mathcal{Y}_-^k$ and $\mathcal{Y}_+^{k+1}=\mathcal{Y}_+^k$ then go step 5. Otherwise set $k = k+1$ and go to step 2.
\item Compute the subdifferential $\lambda_{k+1} := P_{[-1,1]}\left(-\frac{1}{\beta}\xi_{k+1}\right)$.
\end{enumerate}
\end{algorithm}
\noindent The computation of the $L^1$-subdifferential follows from a projection formular similar to the one from Lemma \ref{lem:opt_rel_a_not0}. Since the active sets are disjoint subsets of $\Omega$ the calculation of $\xi_{k+1}$ in Step 4 does not evoke any conflicts to its usage on the subsets $\mathcal{I}_-^k,\mathcal{I}_+^k$ in Step 3 of the algorithm. Further, the termination criterion yields a solution of the augmented Lagrange subproblem \eqref{eq:RegProb}.
\begin{lemma}
If $\mathcal{A}_a^{k+1} =\mathcal{A}_a^k, \mathcal{A}_0^{k+1} =\mathcal{A}_0^k, \mathcal{A}_b^{k+1} =\mathcal{A}_b^k, \mathcal{I}_-^{k+1}=\mathcal{I}_-^k,\mathcal{I}_+^{k+1}=\mathcal{I}_+^k,\mathcal{Y}_-^{k+1}=\mathcal{Y}_-^k,\mathcal{Y}_+^{k+1}=\mathcal{Y}_+^k$, then $(u_{k},y_{k}, p_{k},\mu_k,\lambda_k)$ is the solution $(\bu_{k},\by_{k}, \bp_{k},\bar\mu_k,\bar\lambda_k)$ to \eqref{eq_kkt_alg} with $\alpha,\mu,\beta$ fixed.
\end{lemma}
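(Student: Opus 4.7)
The plan is to verify, one at a time, the five conditions comprising the KKT system \eqref{eq_kkt_alg} for the iterate produced by the inner solve. The first two — the state equation \eqref{eq_kkt_alg_1} and the adjoint equation \eqref{eq_kkt_alg_2} — are satisfied directly from the first two lines of \eqref{eq:kktactset}, since these PDEs are solved exactly at every inner iteration regardless of the active-set splitting. So the real work lies in checking admissibility $u_k \in \Uad$, the variational inequality with $\lambda_k$, and the multiplier identity $\mu_k = (\mu + \rho(y_k - \psi))_+$.

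To establish $u_k \in \Uad$, I would argue pointwise on the disjoint decomposition $\Omega = \mathcal{A}_a^k \cup \mathcal{A}_0^k \cup \mathcal{A}_b^k \cup \mathcal{I}_-^k \cup \mathcal{I}_+^k$. The bound-active cases $\mathcal{A}_a^k$ and $\mathcal{A}_b^k$ are immediate from \eqref{eq:funcOnActSet}, and on $\mathcal{A}_0^k$ the value $u_k = 0$ is admissible because $u_a \leq 0 \leq u_b$ by Assumption \ref{ass:standing}. On $\mathcal{I}_-^k$ one has $\xi_k = -\beta$ by \eqref{eq:funcOnActSet}, so $p_k + \alpha u_k + \xi_k = 0$ gives $u_k = (\beta - p_k)/\alpha$; combined with the defining inequality $\beta \leq p_k \leq \beta - \alpha u_a$ on $\mathcal{I}_-^k$ this yields $u_k \in [u_a, 0] \subset [u_a, u_b]$. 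The argument on $\mathcal{I}_+^k$ is symmetric.

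For the variational inequality \eqref{eq_kkt_alg_3} I would rely on Lemma \ref{lem:opt_rel_a_not0}, whose third identity rewrites \eqref{eq_kkt_alg_3} as the pointwise projection formula $u_k(x) = P_{[u_a(x),u_b(x)]}(-\tfrac{1}{\alpha}(p_k(x) + \beta\lambda_k(x)))$. With $\lambda_k = P_{[-1,1]}(-\xi_k/\beta)$, so $\beta\lambda_k = P_{[-\beta,\beta]}(-\xi_k)$, the verification reduces to a case analysis over the same five subsets: on each one, \eqref{eq:funcOnActSet} together with $p_k + \alpha u_k + \xi_k = 0$ supplies the values of $u_k$ and $\xi_k$ explicitly, and one checks that the resulting $(u_k, \lambda_k)$ pair reproduces the projection identity. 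Finally, for $\mu_k = (\mu + \rho(y_k - \psi))_+$, one invokes the stabilization of $\mathcal{Y}_\pm^k$: on $\mathcal{Y}_+^k$ the algorithm assigns $\mu_k = \mu + \rho(y_k - \psi)$, and consistency of the active sets means this quantity is nonnegative there, so it agrees with its positive part; on $\mathcal{Y}_-^k$ the algorithm assigns $\mu_k = 0$, and consistency means $\mu + \rho(y_k - \psi) \leq 0$ there, so its positive part is also $0$.

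The main obstacle, and the place where the stabilization hypothesis is actually used, is the consistency step in each case. The equations \eqref{eq:kktactset} and the bound assignments \eqref{eq:funcOnActSet} are posed using the \emph{input} active sets, but the KKT system \eqref{eq_kkt_alg} requires the splitting to match the \emph{actual} signs of $p_k \pm \beta$, $p_k + \alpha u_b + \beta$, $p_k + \alpha u_a - \beta$, and $\mu + \rho(y_k - \psi)$ computed from the output. Without the equalities $\mathcal{A}_\bullet^{k+1} = \mathcal{A}_\bullet^k$, $\mathcal{I}_\pm^{k+1} = \mathcal{I}_\pm^k$, and $\mathcal{Y}_\pm^{k+1} = \mathcal{Y}_\pm^k$ there is no a priori reason why these signs are compatible. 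The stabilization of all active sets is exactly what enforces this pointwise compatibility, closes the projection arguments in the second and third paragraphs, and delivers a full KKT tuple for \eqref{eq_kkt_alg}.
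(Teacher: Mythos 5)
Your proposal is correct and follows essentially the same route as the paper: a direct verification, on the stabilized active-set decomposition, that the computed iterate satisfies each relation of \eqref{eq_kkt_alg}, with the multiplier identity read off from the definition of $\mathcal{Y}_\pm^k$ and the variational inequality checked through its pointwise reformulation (you use the projection identity of Lemma \ref{lem:opt_rel_a_not0}, the paper the equivalent $\max/\min$ equation). The only piece the paper makes explicit that you leave implicit is the identification $(u_{k+1},y_{k+1},p_{k+1})=(u_k,y_k,p_k)$ via uniqueness of the solution of \eqref{eq:kktactset} for fixed active sets, which is a bookkeeping step rather than a mathematical gap.
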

\begin{proof}
Since for given active sets the solution to \eqref{eq:kktactset} is unique we have $(u_{k+1},y_{k+1}, p_{k+1}) = (u_{k},y_{k}, p_{k})$. By definition of the active sets $\mathcal{Y}_-^k,\mathcal{Y}_+^k$  we get
$\mu_{k+1} = ( \mu+\rho(y_{k+1}-\psi))_+$. The optimality condition \eqref{eq_kkt_alg_3} can be equivalently expressed by
\begin{align*}
\bu_k&-\max(0,\bu_k+c(\bar\xi_k -\beta)) -\min(0,\bu_k+c(\bar\xi_k +\beta))\\
&+\max(0,(\bu_k-u_b)+c(\bar\xi_k -\beta)) +\min(0,(\bu_k-u_a)+c(\bar\xi_k +\beta)) =0.
\end{align*}
where $c>0$ arbitrary.
Choosing $c=\alpha^{-1}$ and exploiting $\xi_\alpha^\rho = -(p_\rho^\alpha+\alpha \bu_k)$ we get
\begin{align*}
\bu_k&-\alpha^{-1}\max(0,-\bp_k -\beta) -\alpha^{-1}\min(0,-\bp_k +\beta)\\
&+\alpha^{-1}\max(0,-\bp_k -\beta-\alpha u_b) +\alpha^{-1}\min(0,-\bp_k +\beta-\alpha u_a) =0,
\end{align*}
which is satisfied for $\bu_k=u_{k+1},\bp_k=p_{k+1}$ defined by \eqref{eq:funcOnActSet}. Moreover, $\lambda_{k+1}$ satisfies \eqref{eq_kkt_alg_2} by definition.
Consequently $(u_{k},y_{k}, p_{k},\mu_k,\lambda_k)$ satisfies \eqref{eq_kkt_alg}.
\end{proof}
\noindent However, high values of the penalty parameter $\rho$ paired with small values of the Tikhonov parameter $\alpha$ may evoke bad stability during solution of the subproblem. To counteract this aspect we introduce a so called intermediate step. Here, Step 3 and Step 4 of Algorithm \ref{alg_detail} are extended for a third alternative. If the current iterates of the $k$-th iteration do not satisfy the update rule but sufficiently satisfy the feasibility and complementarity condition, i.e.
$$ R_k \geq \tau R_{n-1}^+ \quad \text{and}\quad  \norm{(\by_k-\psi)_+}_{\CO} + |(\bmu_k,\psi -\by_k)| < \eps_I,$$
with $\eps_I>0$, we set
\begin{align*}
\alpha_{k+1} &= \omega \alpha_k,\\
\mu_{k+1} &= \bmu_k, \\
(u_n^+,y_n^+,p_n^+) :&= (\bu_k, \by_k, \bp_k).
\end{align*}

\noindent As a termination criterion we check the optimality conditions of the current iterate $(u_n^+,y_n^+,p_n^+,\mu_n^+,\lambda_n^+)$ i.e. we stop the algorithm if the inequality
\begin{equation}\label{eq:breaking_condition}
\begin{split}
&\norm{u_n^+(x)-P_{[u_a(x), u_b(x)]} \big(u_n^+(x) -(p_n^+ + \beta \lambda_n^+(x)) \big)}_{L^2(\Omega)} \\
&\qquad + \norm{(y_n^+(x)-\psi)_+}_{\CO} + |(\mu_n^+(x),y_n^+(x)-\psi)| \leq \eps
\end{split}
\end{equation}
is satisfied. In order to be consistent we set $\eps_I < \eps$.\\

\noindent As the Active-Set methods are related to the class of semi-smooth Newton methods we cannot expect a global convergence behavior of the method described above. Furthermore, the problem becomes bad conditioned if $\alpha \to 0$ or $\rho \to \infty$. Due to the intermediate step we expect $\rho$ to be bounded. However as $\alpha$ goes to zero we have to globalize our method. We use a projected gradient method to construct suitable starting values for the Active-Set method.

\section{Numerical Results}\label{sec:numerics}
Let us present some numerical results to support our method. We apply our method for problems of the following form:
\begin{gather}
\min\ J(y,u):=\frac{1}{2}||y-y_d||_{L^2(\Omega)}^2 + \beta \|u\|_{L^1(\Omega)}\\
\intertext{subject to}
\begin{alignedat}{2}
Ay &=u+f &\quad& \text{ in }\Omega,\\
y&= 0 && \text{ on }\partial\Omega,\\
        y &\leq \psi &&\text{ in } \Omega,\\
        u_a\leq u &\leq u_b && \text{ in } \Omega.
\end{alignedat}
\notag
\end{gather}
The additional variable $f \in L^2(\Omega)$ allows us to construct test problems with known solutions.

\subsection{Example 1: Bang-Bang-Off Example in One Space Dimension}
We first consider the one-dimensional case and define $\Omega = (-1,1)$, $u_a = -1$, $u_b = 1$ and $\beta = 1$. Furthermore set
\begin{align*}
\bar y(x) &:= \begin{cases} 28 + 108 \cdot x + 144 \cdot x^2 + 64\cdot x^3 &\text{if} \quad x \in [-1, - \frac{3}{4}]\\ 1 &\text{if}\quad x \in [ -\frac{3}{4}, \frac{3}{4} ]\\28 - 108 \cdot x + 144 \cdot x^2 - 64\cdot x^3 &\text{if} \quad x \in [ \frac{3}{4}, 1]  \end{cases}\\
\bar p(x) &:=-2 \cos \left( \frac{3 \pi}{2} x \right)\\
\bar u(x) &:= \begin{cases}0 &\text{if}\quad x \in [-1, - \frac{8}{9}]\cup [-\frac{4}{9}, - \frac{2}{9}] \cup [\frac{2}{9}, \frac{4}{9}] \cup [\frac{8}{9},1]\\
1 &\text{if}\quad  x \in (-\frac{2}{9}, \frac{2}{9})\\
-1 &\text{if}\quad x \in (-\frac{8}{9}, - \frac{4}{9})\cup (\frac{4}{9}, \frac{8}{9}) \end{cases}\\
\bar \mu(x) &:=   \begin{cases} \text{Exp} \left( - \frac{1}{1-\left(\frac{4}{3}x \right)^2} \right)  & \text{if } x \in [-\frac{3}{4}, \frac{3}{4}]\\ 0 & \text{else} \end{cases}\\
\psi(x) &:= 1.
\end{align*}
Some calculations show that $\bar y, \bar p \in C^2(\Omega)$ and $\bar y  = \bar p = 0$ on $\partial \Omega$. By construction we obtain $\bar u(x) \in \{-1,0, 1\}$ for a.e. $x \in \Omega$. In order to satisfy the optimality conditions we now set
\begin{align*}
f(x) &:= - \Delta \bar y(x) - \bar u(x),\\
y_d(x) &:= \Delta \bar p(x) + \bar y(x) + \bar \mu(x).
\end{align*}
One now can check that the functions $(\bar u, \bar y, \bar p, \bar \mu)$ satisfy the KKT conditions defined in Theorem \ref{theo:ex-adjoint-multiplier} with a suitable modification for the forward equation. We apply our algorithm with the following set of parameters
$$\theta = 5,\quad \omega = 0.75,\quad \tau = 0.8,\quad \eps = 10^{-6},\quad \eps_I = 5 \cdot 10^{-7}.$$
The interval $\Omega$ is divided into $10^6$ equidistant elements. The algorithm stops after a total of $40$ iterations, which splits in $13$ successful, $19$ intermediate and $8$ not successful iterations with an average of $5.25$ inner iterations. The parameters were initialized with $\alpha := 1$ and $\rho := 100$ and the final parameters are $\alpha = 0.75^{32} \approx 10^{-4}$ and $\rho = 100 \cdot 5^8 \approx 3.9 \cdot 10^7$.\\
As we have an exact solution we can compute convergence rates. We plot the $L^2$-error $\|u_k^+ - \bu\|_{L^2(\Omega)}$ over the regularization parameter $\alpha_k$. Note the we only plot successful and intermediate steps. As expected we see that the algorithm produces only intermediate steps after some given time. The error can be found in Figure \ref{fig:error_plot_bbo} and plots of the computed solution can be seen in Figure \ref{fig:1d_bbo_1} and \ref{fig:1d_bbo_2}.

\begin{figure}[htbp]
\makebox[\textwidth]{\begin{tikzpicture}
\begin{loglogaxis}[
    title=Error Plot Bang-Bang-Off Example,
    xlabel={regularization parameter $\alpha_k$},
    ylabel=error $\|u_k^+ - \bar u\|_{L^2(\Omega)}$\\\\,
    ylabel style={align=center}, 
    xminorticks=true,
    xmin=0,
    xmax=1,
    height=8cm,
    grid=major,
    legend entries={successful step, intermediate step},
    legend pos=north west,
]



\addplot[color=green, mark=square*, only marks] plot coordinates {

(0.75,0.513471048169)
(0.5625,0.428920400657)
(0.421875,0.362742234977)
(0.31640625,0.307655661246)
(0.2373046875,0.254170066002)
(0.177978515625,0.213287848386)
(0.133483886719,0.177565282325)
(0.100112915039,0.146071409452)
(0.0750846862793,0.118333302969)
(0.0563135147095,0.0941693625061)
(0.0422351360321,0.0741063495883)
(0.0316763520241,0.058214173627)
(0.0100225957576,0.0285062129442)

    };

\addplot[color=red, mark=triangle*, only marks] plot coordinates {

(0.0237572640181,0.0455875999622)
(0.0178179480135,0.0379012173953)
(0.0133634610102,0.0328757714588)
(0.00751694681821,0.0247105367028)
(0.00563771011366,0.0214173830645)
(0.00422828258525,0.0185632542403)
(0.00317121193893,0.0160895879032)
(0.0023784089542,0.0139488459513)
(0.00178380671565,0.012100190261)
(0.00133785503674,0.010508540984)
(0.00100339127755,0.00914635589823)
(0.000752543458165,0.00800295916376)
(0.000564407593624,0.00704546460916)
(0.000423305695218,0.00627259216674)
(0.000317479271413,0.00568711386704)
(0.00023810945356,0.00530462552501)
(0.00017858209017,0.00509385114437)
(0.000133936567628,0.00493277880199)
(0.000100452425721,0.0048530525714)

    };

\addplot[color=black, mark=triangle*, no markers] plot coordinates {

(0.75,0.513471048169)
(0.5625,0.428920400657)
(0.421875,0.362742234977)
(0.31640625,0.307655661246)
(0.2373046875,0.254170066002)
(0.177978515625,0.213287848386)
(0.133483886719,0.177565282325)
(0.100112915039,0.146071409452)
(0.0750846862793,0.118333302969)
(0.0563135147095,0.0941693625061)
(0.0422351360321,0.0741063495883)
(0.0316763520241,0.058214173627)
(0.0237572640181,0.0455875999622)
(0.0178179480135,0.0379012173953)
(0.0133634610102,0.0328757714588)
(0.0100225957576,0.0285062129442)
(0.00751694681821,0.0247105367028)
(0.00563771011366,0.0214173830645)
(0.00422828258525,0.0185632542403)
(0.00317121193893,0.0160895879032)
(0.0023784089542,0.0139488459513)
(0.00178380671565,0.012100190261)
(0.00133785503674,0.010508540984)
(0.00100339127755,0.00914635589823)
(0.000752543458165,0.00800295916376)
(0.000564407593624,0.00704546460916)
(0.000423305695218,0.00627259216674)
(0.000317479271413,0.00568711386704)
(0.00023810945356,0.00530462552501)
(0.00017858209017,0.00509385114437)
(0.000133936567628,0.00493277880199)
(0.000100452425721,0.0048530525714)

};

\end{loglogaxis}
\end{tikzpicture} }
\caption{Error $\|u_k^+ - \bar u\|_{L^2(\Omega)}$ over $\alpha_k$ for example 1.}
\label{fig:error_plot_bbo}
\end{figure}
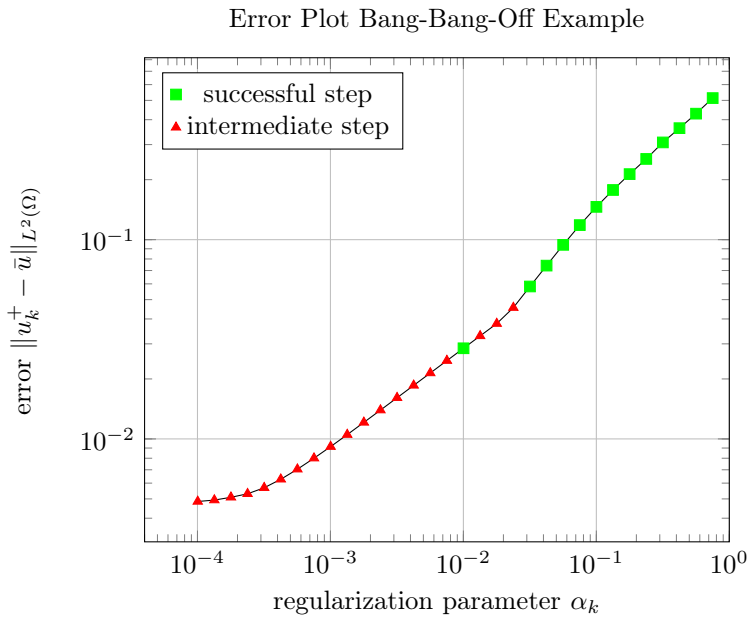

\begin{figure}[htbp]
\makebox[\textwidth]{\includegraphics[width=7cm]{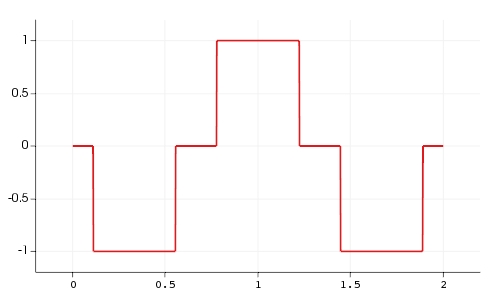} \includegraphics[width=7cm]{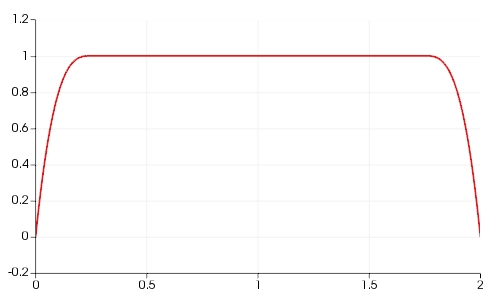}  }
\caption{Computed control $u$ and state $y$ for example 1.}
\label{fig:1d_bbo_1}
\end{figure}

\begin{figure}[htbp]
\makebox[\textwidth]{\includegraphics[width=7cm]{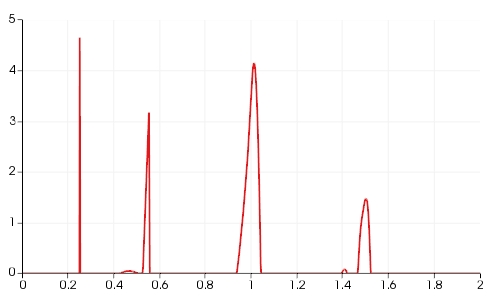} \includegraphics[width=7cm]{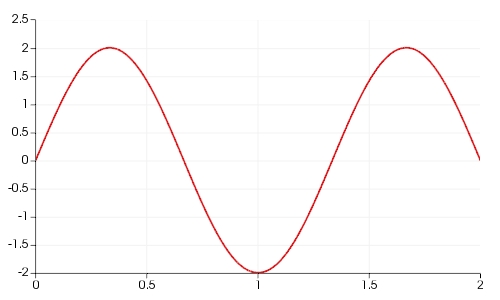}  }
\caption{Computed multiplier $\mu$ and adjoint state $p$ for example 1.}
\label{fig:1d_bbo_2}
\end{figure}

\begin{remark}
Analysing the error $\|u_k^+ - \bu\|_{L^2(\Omega)}$ we see that the error behaves like
\begin{equation}\label{eq:error_estimate}
\|u_k^+ - \bu\|_{L^2(\Omega)} = \mathcal{O}\left( \alpha^\gamma \right)
\end{equation}
with constant $\alpha, \gamma > 0$. We want to mention that the exact control $\bar u$ satisfies the following regularity assumption
$$\text{meas}\{ x\in \Omega: \; \big |  |\bar p(x)| - \beta \big| < \eps \} \leq c \eps^\kappa,$$
for all $\eps > 0$ with some $\kappa > 0$, which can be used to prove error estimates of the form \eqref{eq:error_estimate} for some algorithms, see e.g. \cite{wachsmuth2011b,wachsmuth2016}. However it is an open problem to prove convergence rates for the augmented Lagrange method presented in this paper.
\end{remark}

\subsection{Example 2: Bang-Bang-Off Example in Two Space Dimension}
We set $u_a = -1$, $u_b=1$. Let $\Omega$ be the circle around $0$ with radius $2$. We now define the following functions. For clarity and to shorten our notation we set $r := r(x,y) := \sqrt{x^2 + y^2}$.
\begin{align*}
\bar y(x,y) &:= \begin{cases} 1 & \text{if } r < 1\\ 32 - 120 \cdot r + 180 \cdot r^2 - 130 \cdot r^3 + 45 \cdot r^4 - 6 \cdot r^5 & \text{if } r \geq 1 \end{cases}\\
\bar p(x,y) &:= \sin(x) \cdot \sin(y) \cdot \left( 1 - \frac{5}{4}r^3 + \frac{15}{16} r^4 - \frac{3}{16} r^5 \right)\\
\bar u(x,y) &:= -\text{Sign}( p(x,y) )\\
\bar \mu(x,y) &:=   \begin{cases} \text{Exp} \left( - \frac{1}{1-r^2} \right)  & \text{if } r < 1\\ 0 & \text{if } r \geq 1 \end{cases}\\
\psi(x,y) &:= 1
\end{align*}
Some calculation show that $\bar \mu, \bar p \in C^2(\bar \Omega)$ and $\bar \mu \in C(\bar \Omega)$. Furthermore $\bar y = \bar p = 0$ on $\partial \Omega$. We now set
\begin{align*}
f(x,y) &:= - \Delta \bar y(x,y) - \bar u(x,y),\\
y_d(x,y) &:= \Delta \bar p(x,y) + \bar y(x,y) + \bar \mu(x,y).
\end{align*}
One now can check that for $\beta = 0$ the functions $(\bar u, \bar y, \bar p, \bar \mu)$ satisfy the KKT conditions defined in Theorem \ref{theo:ex-adjoint-multiplier} leading to a bang-bang solution. For $\beta \neq 0$ we expect the optimal solution to exhibit a bang-bang-off structure. Here no exact solution is known. We computed this problem for different values of $\beta$ on a regular triangular grid with approximately $1.8 \cdot 10^5$ degrees of freedom. The parameter used for this computation are $\tau = 0.8$, $\omega = 0.75$, $\theta = 5$, $\eps = 10^{-6}$ and $\eps_I = 5 \cdot 10^{-7}$. We started with $\alpha = 0.1$ and $\rho = 100$. Additional information for the calculations can be found in Table \ref{tab:data_bbo} while the computed controls can be seen in Figure \ref{fig:bang_bang_off_diff_beta}.\\

  \begin{table}[H]\renewcommand{\arraystretch}{1.5}
 \begin{tabular}{|c|c|c|>{\Centering}p{2cm}|>{\Centering}p{2cm}|>{\Centering}p{2cm}|>{\Centering}p{2.5cm}|}
  \hline$\beta$ & final $\alpha$ & final $\rho$ & successful steps & intermediate steps & not successful steps & average inner iterations\\\hline
  $0.05$ & $2.38 \cdot 10^{-5}$ & $10^9$ & 15 & 14 & 7 & 2.9 \\\hline
  $0.1$ & $2.38 \cdot 10^{-5}$ & $10^9$ & 16 & 13 & 7 & 3.1 \\\hline
  $0.2$ & $3.17\cdot 10^{-5}$ & $10^{9}$ & 18 & 10 & 7 & 3.0\\\hline
  $1$ & $5.6 \cdot 10^{-5}$ & $10^{10}$ & 20 & 6 & 8 & 3.5\\\hline
 \end{tabular}
 \caption{Additional information for the computation of example 2 for different $\beta$.}
 \label{tab:data_bbo}
 \end{table}


\begin{figure}[H]
\begin{minipage}[b]{0.45\textwidth}
\includegraphics[width=\linewidth]{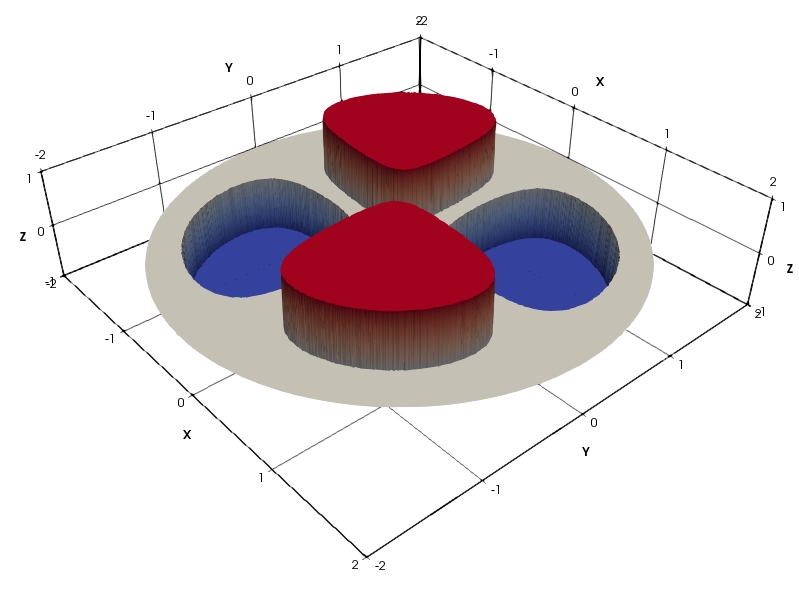}
\end{minipage}
\hfill
\begin{minipage}[b]{0.45\textwidth}
\includegraphics[width=\linewidth]{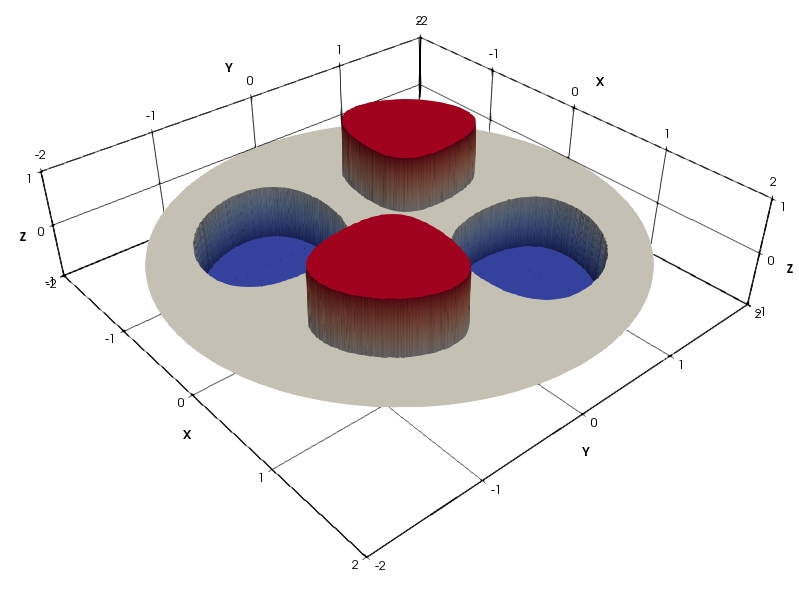}
\end{minipage}
\begin{minipage}[b]{0.45\textwidth}
\includegraphics[width=\linewidth]{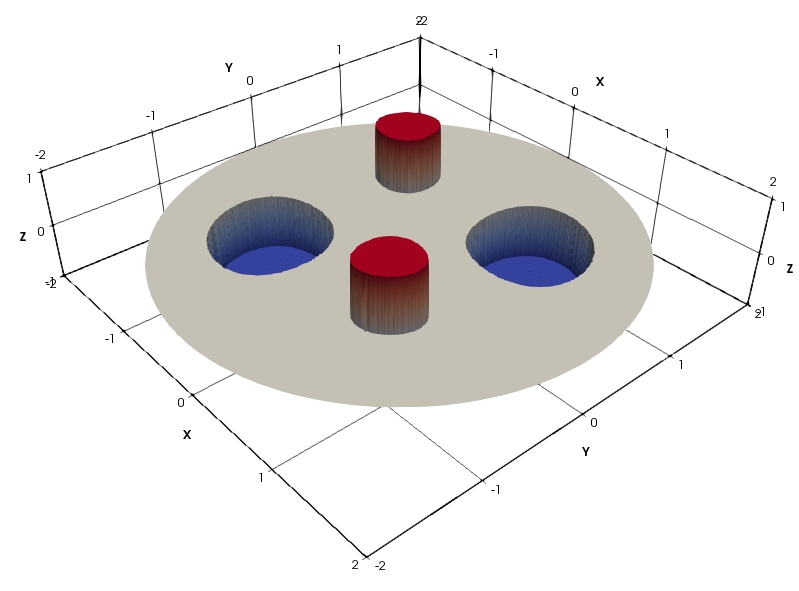}
\end{minipage}
\hfill
\begin{minipage}[b]{0.45\textwidth}
\includegraphics[width=\linewidth]{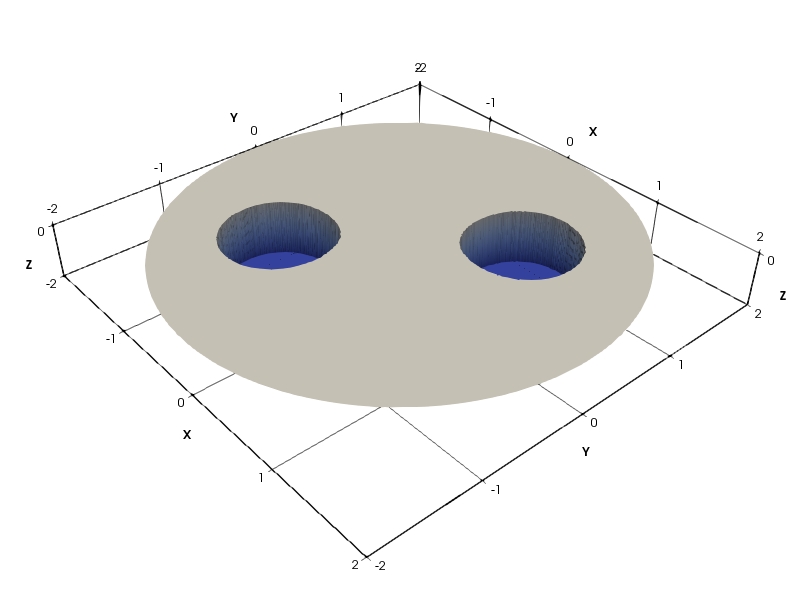}
\end{minipage}
\caption{Computed discrete control for example 2 for different values of $\beta$. From left to right and from top to bottom: $\beta = 0.05$, $\beta = 0.1$, $\beta = 0.2$, $\beta = 1$.}
\label{fig:bang_bang_off_diff_beta}
\end{figure}


\subsection{Example 3}
For the next example we set $\Omega = (0,1)^2$, $u_a = -1$, $u_b = 1$ and $\beta = 10^{-3}$. Furthermore $\tau = 0.8$, $\omega = 0.75$ and $\theta = 10$. Now define
\begin{align*}
\psi(x,y) &:= 0.01\\
y_d(x,y) &:= \frac{1}{2\pi}\sin(\pi x) \sin(\pi y)
\end{align*}
Note that here no exact solution is available. If the state constraints are neglected the exact solution is given by
\begin{align*}
\bar y(x,y) &:= y_d(x,y)\\
\bar u(x,y) &:= \Delta y_d(x,y).
\end{align*}
This example is taken from \cite{poerner2016b} and is an example of an optimal control problem where the desired state is reachable and the source condition $\bar u = S^\ast w$ with an element $w \in L^2(\Omega)$ is satisfied if the state constraints are not present. We computed the solution on a regular triangular grid with $1.6 \cdot 10^5$ degrees of freedom, $\eps = 10^{-6}$ and $\eps_I = 5 \cdot 10^{-7}$. As starting values we set $\alpha = 0.1$ and $\rho = 100$. The algorithm stopped after 8 successful, 25 intermediate and 9 not successful steps with the final values $\alpha = 0.1 \cdot 0.75^{33} \approx 7.5 \cdot 10^{-6}$ and $\rho = 100 \cdot 5^9 \approx 2.0 \cdot 10^{8}$. The computed results can be seen in Figure \ref{fig:donut1} and Figure \ref{fig:donut2} .
\begin{figure}[H]
\begin{minipage}[b]{0.47\textwidth}
\includegraphics[width=\linewidth]{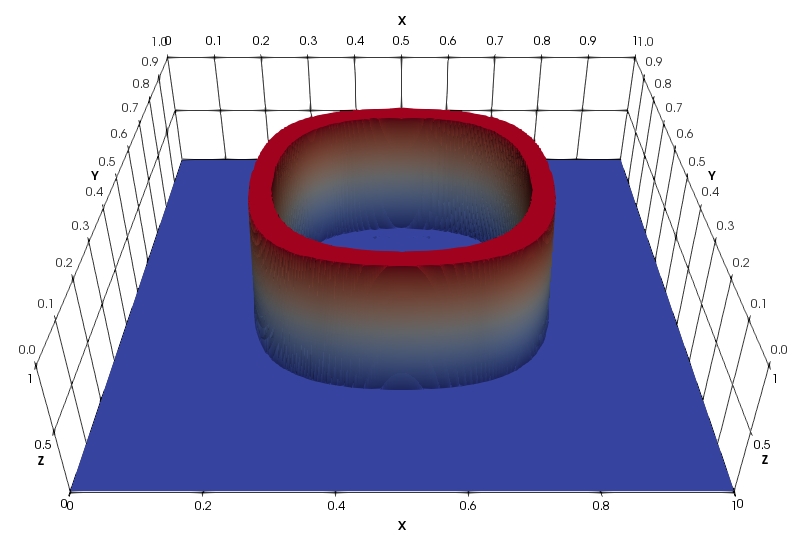}
\end{minipage}
\hfill
\begin{minipage}[b]{0.47\textwidth}
\includegraphics[width=\linewidth]{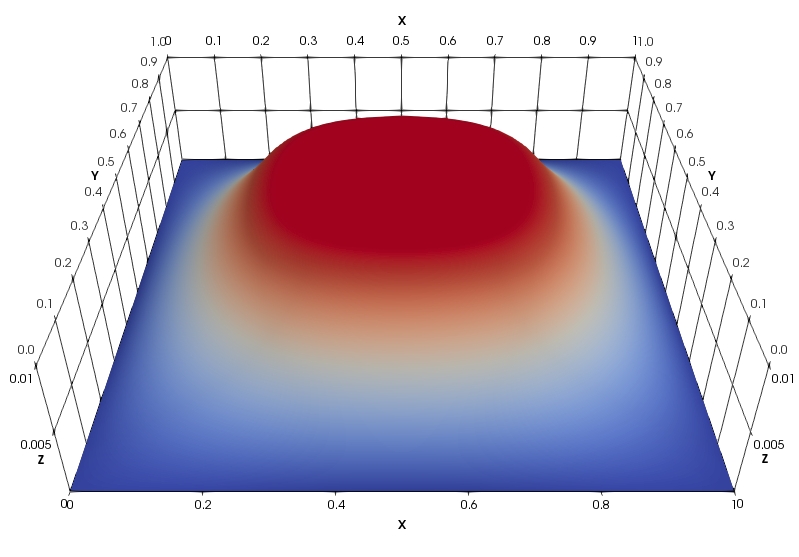}
\end{minipage}
\caption{Computed results for example 3. From left to right: Control $u$, state $y$.}
\label{fig:donut1}
\end{figure}
\begin{figure}[H]
\begin{minipage}[b]{0.45\textwidth}
\includegraphics[width=\linewidth]{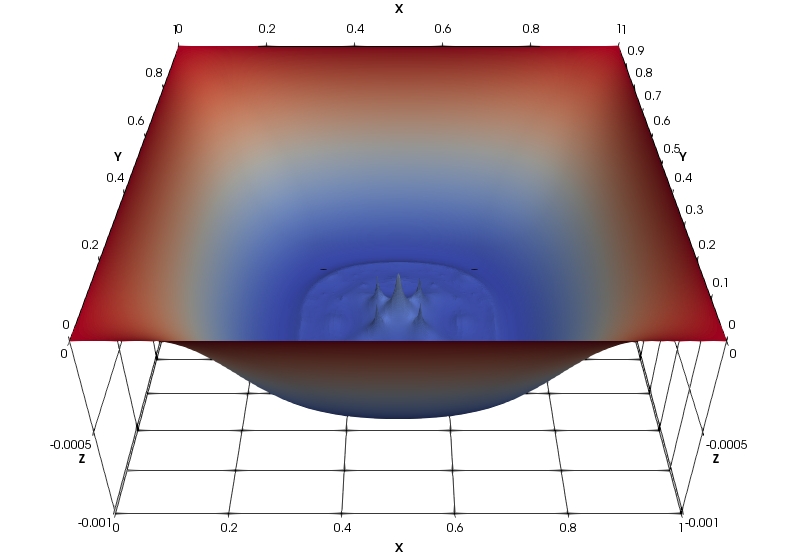}
\end{minipage}
\hfill
\begin{minipage}[b]{0.45\textwidth}
\includegraphics[width=\linewidth]{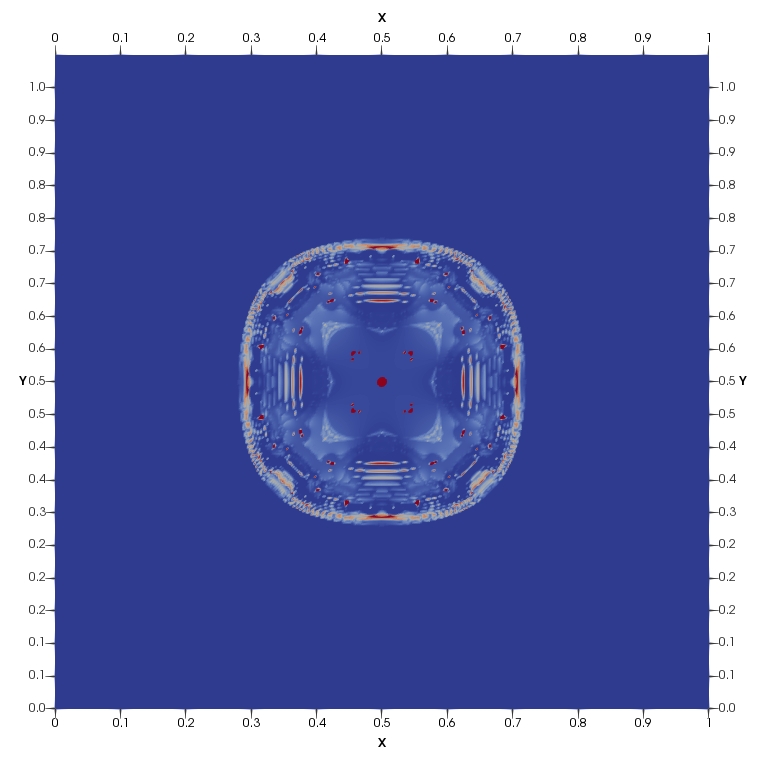}
\end{minipage}
\caption{Computed results for example 3. From left to right: Adjoint state $p$ and multiplier $\mu$. The range of $\mu$ is given by $\mu(x) \in [0,40]$}
\label{fig:donut2}
\end{figure}

\bibliography{mybib}
\bibliographystyle{plain_abbrv}


\end{document}